\tikzstyle arrowstyle=[scale=1]
\tikzstyle directed=[postaction={decorate,
decoration={markings,mark=at position .65 with {\arrow[arrowstyle]{stealth}}}}]
   \def\MR#1{}
\newcolumntype{L}{>{$}l<{$}} 
\newcolumntype{C}{>{$}c<{$}}
\numberwithin{equation}{section}
\newtheorem{theorem}[equation]{Theorem}
\newtheorem{lemma}[equation]{Lemma}
\newtheorem{cor}[equation]{Corollary}
\newtheorem{prop}[equation]{Proposition}
\theoremstyle{definition}
\newtheorem{rmk}[equation]{Remark}
\newenvironment{remark}[1][]{\begin{rmk}[#1] \pushQED{\qed}}{\popQED \end{rmk}}
\newtheorem{eg}[equation]{Example}
\newenvironment{example}[1][]{\begin{eg}[#1] \pushQED{\qed}}{\popQED \end{eg}}
\newtheorem{defn}[equation]{Definition}
\newenvironment{definition}[1][]{\begin{defn}[#1]\pushQED{\qed}}{\popQED \end{defn}}
\newtheorem{notn}[equation]{Notation}
\newenvironment{notation}[1][]{\begin{notn}[#1]\pushQED{\qed}}{\popQED \end{notn}}
\newtheorem{cnst}[equation]{Construction}
\newenvironment{construction}[1][]{\begin{cnst}[#1]\pushQED{\qed}}{\popQED \end{cnst}}
\newcommand{\R}{\mathcal{R}}
\newcommand{\cat}[1]{\mathcal{#1}}
\newcommand{\defi}[1]{{\bf\upshape\sffamily #1}}
\newcommand{\col}{\operatorname{col}}
\newcommand{\ext}{\operatorname{Ext}}
\newcommand{\tor}{\operatorname{Tor}}
\newcommand{\Ker}{\operatorname{Ker}}
\newcommand{\ideal}[1]{\mathfrak{#1}}
\newcommand{\fn}{\ideal{n}}
\newcommand{\fp}{\ideal{p}}
\newcommand{\fb}{\ideal{b}}
\renewcommand{\phi}{\varphi}
\newcommand{\bbs}{\mathbb{S}}
\newcommand{\bbz}{\mathbb{Z}}
\newcommand{\bbq}{\mathbb{Q}}
\newcommand{\bbc}{\mathbb{C}}
\renewcommand{\sl}{\mathfrak{sl}}
\newcommand{\Spo}{\operatorname{SpO}}
\renewcommand{\geq}{\geqslant}
\renewcommand{\leq}{\leqslant}
\renewcommand{\ker}{\Ker}
\renewcommand{\hom}{\Hom}
\newcommand{\Hom}{\operatorname{Hom}}	
\newcommand{\Tor}[4][R]{\operatorname{Tor}^{#1}_{#2}(#3,#4)}
\newcommand{\rad}[1]{\operatorname{rad}(#1)}
\newcommand{\SO}{\operatorname{SO}}
\newcommand{\GL}{\operatorname{{GL}}}
\newcommand{\gl}{\mathfrak{gl}}
\newcommand{\so}{\mathfrak{so}}
\def\Tor{\operatorname{Tor}}
\newcommand{\maps}[5]{\xymatrix{#1 \ar[r]^-{#3} & #2 \\
#4 \ar@{|->}[r] & #5 \\}}
\newcommand{\ind}{\textrm{ind}}
\newcommand*{\sheafhom}{\mathscr{H}\text{\kern -3pt {\calligra\large om}}\,}
\newcommand*{\sheafext}{\mathscr{E}\text{\kern -3pt {\calligra\large xt}}\,}
\newcommand{\fh}{\mathfrak{h}}
\newcommand{\hs}{\operatorname{HS}}
\newcommand{\uU}{\mathrm{U}}
\def\Tor{\operatorname{Tor}}
\def\kk{\mathbf{k}}
\def\w{\wedge}
\newcommand{\Sym}{\operatorname{Sym}}
\newcommand{\igr}{\operatorname{IGr}}
\newcommand{\g}{\mathfrak{g}}
\newcommand{\OO}{\cat{O}}
\newcommand{\row}{\operatorname{row}}
\renewcommand{\col}{\operatorname{col}}
\renewcommand{\ind}{\operatorname{Ind}}
\newcommand{\BGG}{\mathrm{BGG}}
\title{From total positivity to pure free resolutions}
\author{Steven V Sam}
\address{Department of Mathematics, University of California San Diego}
\email{ssam@ucsd.edu}
\author{Keller VandeBogert}
\address{Department of Mathematics, University of Notre Dame, Notre Dame}
\email{kvandebo@nd.edu}
\date{March 25, 2025}
\begin{document}

\begin{abstract}
  Using the Jacobi--Trudi identity as a base, we establish parallels between the theory of totally positive integer sequences and Koszul algebras. We then focus on the case of quadric hypersurface rings and use this parallel to construct new analogues of Schur modules. We investigate some of their Lie-theoretic properties (and in more detail in a followup article) and use them to construct pure free resolutions for quadric hypersurface rings which are completely analogous to the construction given by Eisenbud, Fl\o ystad, and Weyman in the case of polynomial rings.
\end{abstract}

\maketitle

\section{Introduction}

The motivations behind this article come from commutative algebra (free resolutions), representation theory (quantum algebras), and combinatorics (total positivity). At the center is a single construction that ties all of these topics together. While we feel we are still far from understanding its full ramifications, what we have discovered suggests that there is a very rich mathematical theory to be uncovered. We will begin with the combinatorial discussion. 

\subsection{A global criterion for equivariant positivity}

A real-valued matrix is totally positive if all of its minors are non-negative, and a sequence of real numbers $(a_d)_{d \ge 0}$ is a P\'olya frequency (PF) sequence if the Toeplitz matrix $(a_{i-j})$ (with the convention that $a_d=0$ for $d<0$) is totally positive. The universal setting for this is when the $a_d$ are algebraically independent variables; concretely, we can take $a_d$ to be the complete homogeneous symmetric function of degree $d$. In this case, each minor is a skew Schur function by the classical Jacobi--Trudi identity.

The complete homogeneous symmetric functions are well-known to be the characters of symmetric powers, while the skew Schur functions are characters of skew Schur functors. The sum of the symmetric powers is the symmetric algebra (or polynomial ring), and the starting point for us is to think of this as a recipe for associating a theory of ``Schur modules'' to a graded algebra. At the most basic level, if $(a_d)$ is a PF sequence for the Hilbert function of an algebra $A$, then the minors should be the dimensions of the corresponding Schur modules.

The second author explored this idea in \cite{vandebogert2023ribbon} where it is proven that if $A$ is a Koszul algebra, then there is a good functorial construction of Schur modules indexed by ribbon-shaped skew diagrams. Unfortunately, this idea cannot extend more generally since the Hilbert function of a Koszul algebra need not be a PF sequence. Surprisingly, the converse is true, which is our first main theorem (see Theorem~\ref{thm:GPFalgebraStructure}).

\begin{theorem}
  If an integer-valued sequence $(a_d)_{d \ge 0}$ with $a_0=1$ is a PF sequence, then there exists a Koszul algebra $A$ with $\dim A_d = a_d$.
\end{theorem}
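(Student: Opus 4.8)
The plan is to build $A$ explicitly from a known characterization of PF sequences and then verify Koszulness. By the classical Aissen–Schoenberg–Whitney–Edrei theory, a sequence $(a_d)_{d\ge 0}$ with $a_0 = 1$ is a PF sequence if and only if its generating function factors as
\[
  \sum_{d \ge 0} a_d t^d \;=\; e^{\gamma t}\,\frac{\prod_{i}(1 + \alpha_i t)}{\prod_{j}(1 - \beta_j t)},
\]
with $\gamma \ge 0$, all $\alpha_i, \beta_j \ge 0$, and $\sum \alpha_i + \sum \beta_j < \infty$. Since we are told the sequence is \emph{integer}-valued, I would first argue that the exponential factor must be trivial ($\gamma = 0$) and that only finitely many $\alpha_i,\beta_j$ are nonzero — indeed an integer-valued sequence has a rational generating function (its Hankel determinants or a linear recurrence argument), which kills $e^{\gamma t}$ and forces the products to be finite; moreover rationality plus integrality should pin down the nonzero $\alpha_i,\beta_j$ to be (reciprocals of) positive integers, though one must be careful here. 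So the Hilbert series we must realize is $\prod_i (1+\alpha_i t) / \prod_j (1 - \beta_j t)$ with positive integer data.

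Next I would realize this Hilbert series by a tensor product of ``atomic'' Koszul algebras. The factor $1 + nt$ is the Hilbert series of an exterior algebra $\bigwedge(k^n)$ truncated appropriately — more precisely, of a graded algebra concentrated in degrees $0$ and $1$ with $\dim A_1 = n$ and all products zero (the ``trivial'' square-zero algebra $k \oplus k^n$), which is Koszul. The factor $1/(1 - nt)$ is the Hilbert series of a polynomial ring (or more precisely a free algebra / tensor algebra) on $n$ generators in degree $1$ — the free associative algebra $k\langle x_1,\dots,x_n\rangle$ has Hilbert series $1/(1-nt)$ and is Koszul. If instead one wants the \emph{commutative} polynomial ring one gets $1/(1-t)^n$, so to hit a single factor $1/(1-nt)$ the tensor algebra is the natural choice. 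Since the tensor product of Koszul algebras is Koszul and its Hilbert series is the product of the Hilbert series, multiplying together one square-zero algebra per $\alpha_i$ and one free algebra per $\beta_j$ yields a Koszul algebra $A$ with $\dim A_d = a_d$ for all $d$.

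The main obstacle — and the step that needs the most care — is the number-theoretic reduction: showing that an integer-valued PF sequence really does have generating function of the form $\prod(1+\alpha_i t)/\prod(1-\beta_j t)$ with the $\alpha_i,\beta_j$ \emph{positive integers} (not merely positive reals), since otherwise the geometric-series and exterior-algebra building blocks do not have integer Hilbert functions. One clean route: the integrality forces a rational generating function $p(t)/q(t)$ in lowest terms; total positivity of the Toeplitz matrix forces $p$ to have only negative real roots and $q$ only negative real roots (equivalently $1/(\text{reciprocal roots})$ all real positive); then an argument using that the $a_d$ are eventually governed by the dominant pole, combined with integrality of \emph{all} partial products / subresultants, should force the reciprocal roots to be integers. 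I would isolate this as a lemma. Everything after that lemma is the soft tensor-product-of-Koszul-algebras construction described above, and the Hilbert-function bookkeeping is routine.
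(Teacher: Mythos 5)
There is a genuine gap, and it sits exactly at the step you flagged as needing the most care: the ``number-theoretic reduction'' lemma is false. An integer-valued PF sequence need not have generating function $\prod_i(1+\alpha_i t)/\prod_j(1-\beta_j t)$ with integer $\alpha_i,\beta_j$. A minimal counterexample is the sequence $1,3,1,0,0,\dots$: by Aissen--Schoenberg--Whitney, a finitely supported sequence is PF if and only if its generating polynomial has only real nonpositive roots, and $1+3t+t^2$ has roots $(-3\pm\sqrt{5})/2$, so this is an integer PF sequence, yet its factorization is $(1+\alpha t)(1+\alpha^{-1}t)$ with $\alpha=(3+\sqrt5)/2\notin\bbz$. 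Moreover $1+3t+t^2$ admits no nontrivial factorization at all into power series with nonnegative integer coefficients and constant term $1$ (nonnegative coefficients forbid cancellation), so no tensor-product-of-building-blocks construction can realize this Hilbert series; the algebra the theorem promises is genuinely ``indecomposable'' here. A secondary problem is your assertion that integrality forces a rational generating function: that is false for general integer sequences (e.g.\ $\sum d!\,t^d$), so even the elimination of the factor $e^{\gamma t}$ would need an argument using the PF hypothesis rather than integrality alone. The soft half of your proposal is fine --- the square-zero algebra $\kk\oplus\kk^n$ and the tensor algebra $\kk\langle x_1,\dots,x_n\rangle$ are Koszul with Hilbert series $1+nt$ and $1/(1-nt)$, and tensor products of Koszul algebras are Koszul with multiplicative Hilbert series --- but the arithmetic input it rests on does not hold.

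For comparison, the paper's proof avoids any factorization of the generating function. It first shows (Proposition~\ref{prop:JTandSchurEquivalence}) that a PF sequence yields a Jacobi--Trudi structure, i.e.\ compatible $\GL_n$-actions on $A^{\otimes n}$ with prescribed weight spaces; applying Zelevinsky's functor to the BGG resolutions then produces acyclic complexes $JT^{(1^n)}$ which are identified with the degree-$n$ strands of a cobar complex on $A^*$. This simultaneously \emph{defines} the (co)multiplication and proves $\Tor^A_i(\kk,\kk)_j=0$ for $i\neq j$, i.e.\ Koszulness, in one stroke. If you want to pursue a factorization-style argument, you would need to handle irreducible integer PF polynomials such as $1+3t+t^2$ directly, which essentially forces you back to constructing a quadratic algebra with prescribed Hilbert function and verifying Koszulness by hand.
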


In recent years, there has been considerable interest in \emph{equivariant} notions of total positivity; that is, positivity at the level of K-theory and not just as integer sequences \cites{lam2007schur,gedeon2017equivariant,matherne2023equivariant,gui2024equivariant,angarone2025chow}. This finer notion of positivity is much rarer and it is also more difficult to prove since there is no equivariant analogue of the famous AESW theorem \cite{aissen1952generating}. One of the main contributions of this work is a global criterion for checking equivariant total positivity, which comes from the existence of a so-called \defi{Jacobi--Trudi structure}. Namely, we define a Jacobi--Trudi structure on a $G$-representation $A$ to be a series of compatible $\GL_n$-actions on $A^{\otimes n}$ that commute with $G$ and such that each $\mu$-weight space coincides with the tensor product $A_{\mu_1} \otimes \cdots \otimes A_{\mu_n}$. Then for each partition $\lambda$, the dimension of the $\bbs_\lambda(\bbc^n)$-isotypic component of $A^{\otimes n}$ is precisely the dimension given by the minor above. Hence we get a functor $\Phi_{A,n} = (- \otimes A^{\otimes n})^{\GL_n}$ on $\GL_n$-representations inducing an equivalence between the notion of equivariant positivity and the existence of Jacobi--Trudi structures:

\begin{theorem}
    Let $A = \bigoplus_{i \geq 0} A_i$ be a graded $G$-representation, where $A_0 = \kk$ is the trivial representation. Then:
    $$A \ \text{is equivariantly totally positive} \quad \iff \quad A \ \text{admits a Jacobi--Trudi structure.}$$
\end{theorem}

The key ingredient underlying all of the above results is a construction of Zelevinsky \cite{zelevinskii1987resolvents}, which has proven to be a powerful tool for endowing purely combinatorial data with algebraic structure. Moreover, there is a direct parallel between partial Jacobi--Trudi structures and the partial $\operatorname{PF}_\ell$-properties \cite{reiner2005charney} (for instance, equivariant \defi{log-concavity} is equivalent to a compatible $\GL_2$-action on $A^{\otimes 2}$), a direction that will be studied much more closely in future work of the authors. This suggests that there are natural hierarchies of Koszul algebras that should be studied in parallel to the positivity properties of integer sequences. We also refer to \cite{reiner2005charney} where similar types of questions are asked.

Next, we will outline the surprising significance of this construction in the context of Boij--S\"oderberg theory.

\subsection{Pure free resolutions}

In \cite{boij2008graded}, Boij and S\"oderberg conjectured that the  Betti tables of graded Cohen--Macaulay modules over polynomial rings $A$ can be written as non-negative linear combinations of the Betti tables of modules $M$ having a pure free resolution. This latter condition means that each Tor group $\Tor_i^A(M,\kk)$, which is naturally a graded vector space, is concentrated in a single degree $d_i$. It was previously known that the sequence of these degrees, which we will call the degree sequence, uniquely determines the ranks of the Tor groups up to simultaneous scalar multiple. However, the existence of modules having a pure free resolution with a given degree sequence was not known; the first construction (EFW complexes) was given in \cite{eisenbud2011existence} using ideas from representation theory. The full statement of the conjecture was later resolved in \cite{eisenbud2009betti}.

A natural generalization is to replace the polynomial ring with another graded commutative ring $A$. Our belief is that the next most complicated class from this perspective are the rings with finite Cohen--Macaulay representation type, i.e., those rings for which the set of isomorphism classes of indecomposable maximal Cohen--Macaulay modules is finite, since the ``tail end'' of every free resolution of a graded $A$-module is precisely the resolution of a maximal Cohen--Macaulay module, at least if $A$ is itself Cohen--Macaulay. In particular, the behavior of the infinite aspect of minimal free resolutions is reduced to a discrete set of cases. Such rings have been classified \cite{eisenbud1988classification} (see also Example~\ref{ex:ver-schurdim} for a curious connection of this classification and Jacobi--Trudi structures). The two infinite families are quadric hypersurface rings and coordinate rings of the rational normal curve, along with a finite set of exceptional examples. The case of the rational normal curves was worked out in \cite{BS-RNC} and one of the exceptional examples was worked out in \cite{BS-3pts}, so the final unsolved infinite family is the case of quadric hypersurface rings. 

It turns out that the functors $\Phi_{A,n}$ induced by a general Jacobi--Trudi structure behave in rather unpredictable ways, which means that a more careful analysis of these functors in the quadric case is required to construct pure free resolutions. We thus take advantage of the explicit Jacobi--Trudi structure constructed in \cite{charCoincidence}, combined with Lie-theoretic analogues of the geometric technique for computing syzygies in \cite{weyman2003}, to deduce the following:

\begin{theorem}\label{thm:introPhifunctors}
    Let $\kk$ be a field of characteristic $0$ and let $A = \kk[x_1,\dots,x_m]/(q)$ where $q$ is a nonzero homogeneous quadratic polynomial of odd rank. Then the functors $\Phi_{A,n}$ induced by the Jacobi--Trudi structure on $A$ (provided by \cite{charCoincidence}) convert $\kk[x_1,\dots,x_n]$-modules into $A$-modules and preserve freeness (if $n$ is large enough with respect to the representation-type of the generators).
  \end{theorem}
  
There is a subtle point worth mentioning now: the functors $\Phi_{A,n}$ \emph{cannot} be monoidal in general (see Remark~\ref{rk:noMonoidalStructure}), so the above result is in some sense the best possible. Combining Theorem \ref{thm:introPhifunctors} with the construction of EFW complexes, we construct pure free resolutions over odd rank quadric hypersurface rings for all degree sequences (see Theorem~\ref{thm:quadricPieriResolutions} for the detailed version).

\begin{theorem}
  Let $\kk$ be a field of characteristic $0$ and let $A = \kk[x_1,\dots,x_m]/(q)$ where $q$ is a nonzero homogeneous quadratic polynomial of odd rank.
  Pick a sequence of positive integers $e_0, e_1 , \dots , e_{m-1}$ and set $d_i = \sum_{j=0}^i e_j$.
  \begin{enumerate}
  \item There is a finite length pure free resolution over $A$ with degree sequence

    $d_0,d_1 , d_2 , \dots , d_{m-1}$.
  \item There is an infinite length pure free resolution over $A$  with degree sequence

    $d_0,d_1 , \dots , d_{m-1} , d_{m-1}+1 ,d_{m-1}+2 , \dots$.
  \end{enumerate}
  In both cases, the module being resolved has finite length.
\end{theorem}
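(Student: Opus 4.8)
The plan is to transport the Eisenbud--Fl\o ystad--Weyman (EFW) pure resolutions from a polynomial ring to \(A\) through the Jacobi--Trudi functor \(\Phi_{A,n}\). Fix \(n\gg0\) (large enough for the conclusions of \S\ref{ss:phi}, depending on \(m\) and the rank of \(q\)), write \(S_n=\kk[x_1,\dots,x_n]\), and recall that for a strictly increasing degree sequence \(\mathbf d=(d_0<d_1<\cdots)\) EFW produce a \(\GL_n\)-equivariant minimal free resolution \(\mathbf F_\bullet\) over \(S_n\) of a Cohen--Macaulay module, with \(\mathbf F_i=\bbs_{\lambda(i)}(\kk^n)\otimes_\kk S_n(-d_i)\) for explicit partitions \(\lambda(i)\) (in particular \(\ell(\lambda(i))=i\), and for \(i\ge1\) the last row of \(\lambda(i)\) has length \(e_i\)) and differentials built from Pieri maps. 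I would apply \(\Phi_{A,n}\) to \(\mathbf F_\bullet\): by the grading-compatibility built into a Jacobi--Trudi structure, \(G_i:=\Phi_{A,n}(\mathbf F_i)\) is a \emph{free} graded \(A\)-module generated in degree \(d_i\), of rank equal to the Jacobi--Trudi minor of the Toeplitz matrix of \((\dim_\kk A_d)_d\) indexed by \(\lambda(i)\), i.e.\ the dimension of the ``\(A\)-Schur module'' \(\bbs^A_{\lambda(i)}\); the Pieri maps transport as well, giving a complex \(G_\bullet\) of free \(A\)-modules.

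The first real task is to prove \(G_\bullet\) is acyclic, hence is the minimal free resolution of \(N:=\coker(G_1\to G_0)\) (minimality is automatic since the \(d_i\) increase strictly). This is \emph{not} formal --- \(\Phi_{A,n}\) applied to the Koszul complex over \(S_n\) is not acyclic over \(A\), so one cannot invoke exactness of a functor. Instead I would re-run EFW's own acyclicity argument over \(A\): the representation-theoretic vanishing (a Bott / Cauchy-filtration statement) that makes \(\mathbf F_\bullet\) exact over \(S_n\) has a counterpart over \(A\) supplied precisely by the Jacobi--Trudi structure --- this is the ``relaxation of an algebra structure'' discussed before Theorem~\ref{thm:quadricPieriResolutions} --- and feeding it into EFW's argument gives acyclicity. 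That \(N\) has finite length follows, for the finite resolution, from \(\depth A=m-1\) (so \(\depth_A N=0\) by Auslander--Buchsbaum) together with the fact that localizing the minimal length-\((m-1)\) resolution at any non-maximal prime kills \(N\) (no unit entries survive); for the infinite resolution, from the Hilbert series of \(N\), namely \(H_A(t)\sum_i(-1)^i(\rank G_i)t^{d_i}\), being a polynomial --- equivalently, the Herzog--K\"uhl-type equations holding.

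Finally, the dichotomy in \(e_m\) is governed by vanishing of \(\bbs^A_{\lambda(i)}\). From the Hilbert series \(\sum_d(\dim_\kk A_d)t^d=\tfrac{1+t}{(1-t)^{m-1}}\) and the hook criterion for (super-)Schur functions, \(\bbs^A_\lambda\) is nonzero exactly when \(\lambda\) fits in the \((m-1,1)\) fat hook, i.e.\ when \(\lambda_m\le1\). Since \(\ell(\lambda(i))=i\): for \(i\le m-1\) one has \(\lambda(i)_m=0\), so \(G_0,\dots,G_{m-1}\neq0\); and along a linear tail the shapes only grow by appending rows of length \(1\), keeping \(\lambda(i)_m\le1\) and (by a short tableau count) keeping \(\dim\bbs^A_{\lambda(i)}\) constant, which matches the matrix-factorization behavior of maximal Cohen--Macaulay modules over the quadric. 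Hence: if \(e_m>1\), run the construction for \(\mathbf d=(d_0,\dots,d_m)\); then \(\lambda(m)_m=e_m\ge2\), so \(G_m=0\) and \(G_\bullet\) has length \(m-1\), proving part~(1). If \(e_m=1\), run it for \(\mathbf d=(d_0,\dots,d_{m-1},d_{m-1}+1,d_{m-1}+2,\dots)\), as a limit over truncations each handled over a sufficiently large \(S_n\); all terms survive, the tail is linear, and one gets the infinite pure resolution of part~(2).

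The main obstacle I anticipate is the acyclicity of \(G_\bullet\) over the non-regular ring \(A\): faithfully porting EFW's exactness proof through the Jacobi--Trudi structure, and making the combinatorics of the partitions \(\lambda(i)\) precise enough that ``\(\lambda(i)_m\le1\)'' translates exactly into the stated dichotomy and into constancy of the tail Betti numbers. A secondary point needing care is the passage to the genuinely infinite resolution in part~(2) and its identification as a limit of the finite transported complexes.
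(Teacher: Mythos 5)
Your overall skeleton (apply a $\Phi$-type functor to the EFW complexes and take a colimit over $\dim E$ for the infinite tail) is the paper's, and your treatment of the dichotomy via vanishing of $\bbs^A_\lambda$ when $\lambda_m>1$ matches Proposition~\ref{prop:quadric-schurdim}. But you have inverted which step is formal and which step carries the content, and the two load-bearing claims in your argument are respectively a non-problem and an unjustified assertion. First, acyclicity: in the paper's setup $\Phi^{Z_{V,E}}(-)=(Z_{V,E}\otimes -)^{\GL(E)}$ \emph{is} an exact functor on $\GL(E)$-representations (invariants under a reductive group in characteristic $0$), so applying it to the exact EFW complex immediately yields an exact complex of $A$-modules --- no re-run of EFW's Bott-vanishing argument over the singular ring $A$ is needed, and it is unclear such a re-run could even be carried out. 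Your Koszul-complex objection misdiagnoses the failure there: the image of the Koszul complex is still acyclic; what fails is that its terms are not free $A$-modules, because the hypothesis $\dim E>\ell(\lambda)$ is violated for the high exterior powers.

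That points to the real gap: you assert that $G_i=\Phi(\mathbf F_i)$ is a free $A$-module ``by the grading-compatibility built into a Jacobi--Trudi structure.'' The Jacobi--Trudi structure only controls weight spaces, hence ranks; it gives neither the $A$-module structure on $\Phi(\mathbf F_i)$ nor freeness, and Remark~\ref{rk:noMonoidalStructure} shows no monoidal functor can make this automatic. This is precisely the content of Proposition~\ref{lem:PhiConvertsIntoAModule}, whose proof requires the whole apparatus of \S\ref{sec:quadricSchur}: realizing $A^{\otimes n}$ as $Z_{V,E}=\uU(\g_{V,E})/I_{V,E}$ with its left $\uU(\g_{V,E})$-module structure (Theorem~\ref{thm:firstMainTorThm}), identifying $\Phi^{\uU(\g_{V,E})}(S^\bullet(E^*))\cong S^\bullet(V)$ as algebras via a determinantal-ring computation (Lemma~\ref{lem:sym-transform}), a flat degeneration of the quadric to rank $1$ to control the generating degree, and a Hilbert-series-plus-equivariance argument to show the annihilator is exactly $(q)$. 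Without supplying this (or an equivalent), your complex $G_\bullet$ is not known to consist of free modules, and the theorem does not follow. Your finite-length argument via Auslander--Buchsbaum and Herzog--K\"uhl is workable but unnecessary once exactness of $\Phi$ is in hand, since the EFW complex already resolves a finite-length module.
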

            
We also show that the pure free resolutions for the rational normal curve (as studied in \cite{BS-RNC}) can be understood in terms of Jacobi--Trudi structures. 

Aside from its direct connection to the Boij--S\"oderberg conjectures, another value of this result is that it is a novel new way to construct explicit infinite minimal free resolutions. For instance, the well-known Eisenbud--Shamash construction \cite{eisenbud1980homological,shamash1969poincareseries} gives free resolutions over hypersurface rings starting from free resolutions over polynomial rings, but the result is often non-minimal.

The representation theory portion of our discussion expanded the scope of the article quite a bit and is addressed in a separate article \cite{charCoincidence}.

\subsection{Future Directions}

We discuss some future directions related to constructing ``well-behaved'' Jacobi--Trudi structures. In general, there are many different classes of algebras whose Hilbert functions are PF sequences. The results of this paper give a canonical method of endowing any totally positive algebra with a Jacobi--Trudi structure, but our description in terms of multiplicity spaces does not take into account the ambient product structure on $A$; this leads to a generally badly-behaved functorial correspondence $\bbs_\lambda (E) \mapsto \bbs_\lambda^A$. Our construction of the Jacobi--Trudi structure on quadric hypersurfaces hints at a deeper relationship with the \defi{homotopy Lie algebra} of $A$. 

Recall that for any commutative ring $A$, the Ext algebra $\ext^\bullet_A (\kk , \kk)$ is the enveloping algebra $\uU (\pi^\bullet (A))$ of a $\bbz$-graded Lie superalgebra $\pi^\bullet (A)$ \cite{milnor1965structure}, which is the homotopy Lie algebra of $A$. Given any vector space $E$, consider the ``fattened'' homotopy Lie algebra
\[
  \pi_E^\bullet (A) \coloneq \bigoplus_{d \ge 1} \pi^d (A) \otimes \bigwedge^d E 
\]
with bracket $[ u \otimes e , u' \otimes e'] = [u , u'] \otimes e \w e'$ (where $u, u' \in \pi^\bullet (A)$, $e, e' \in \bigwedge^\bullet E$). Then $\pi_E^\bullet (A)$ is a $\bbz$-graded Lie algebra, and in view of the work of \cite{charCoincidence} we are led to suspect that there is often an isomorphism of the form
$$A^{\otimes n} \cong \uU (\pi_{\bbc^n}^\bullet (A) ) / I_{\bbc^n},$$
where $I_{\bbc^n}$ is some left $\uU (\pi_{\bbc^n}^\bullet (A))$-ideal with a commuting $\GL_n$-action. This is evidently true when $n = 1$, and in fact we know how to obtain such structures for $n = 2$ on the homogeneous coordinate rings of certain (isotropic) Grassmannians as quotients of $\uU (\pi_{\bbc^2}^\bullet (A) )$ (this will appear in future work). In general, the dimensions $\epsilon_i (A) := \dim_\kk \pi^i (A)$ (known as the \emph{deviations} of $A$) of a totally positive sequence determine the Hilbert function of $A$, in which case total positivity implies a numerical condition on the deviations of an arbitrary Koszul algebra. The precise relationship between Jacobi--Trudi structures, homotopy Lie algebras, and the deviations of a Koszul algebra remains mysterious to us. 

Finally, even if some examples may fail to be totally positive, we suggest that certain relaxations may still be interesting. For instance, there is a symplectic group analogue of the quadric hypersurface which is not totally positive, but does satisfy a partial total positivity, see Remark~\ref{rmk:sympletic}.

\subsection{Outline} 

Here is a quick overview of what is contained in each section.

\S\ref{sec:prelim} contains preliminaries, notation, and background references, including one of our main tools mentioned previously: Zelevinsky's functor. 

\S\ref{sec:combinatorics} discusses total positivity in the equivariant setting (the group action was ignored in the introduction). We develop the basic properties and explain some fundamental examples.

\S\ref{sec:algebraic} develops the notion of Jacobi--Trudi structure. We show under certain circumstances that this is equivalent to the notion of total positivity and also prove some parallels with Koszul algebras. In particular, this is where we prove our first main result of the paper, which realizes every (equivariant) PF-sequence as the Hilbert function of a Koszul algebra.

In \S\ref{sec:quadricSchur} we construct the Jacobi--Trudi structure for the quadric hypersurface ring using the approach with enveloping algebras outlined above. We also establish the properties of the functors $\Phi$ mentioned earlier.

Finally, in \S\ref{sec:purefree}, we apply the Jacobi--Trudi structures to construct pure free resolutions of the quadric hypersurface rings. We also examine the case of Veronese subrings of the polynomial rings.

\subsection*{Acknowledgements}

We thank Pasha Pylyavskyy, Vic Reiner, Andrew Snowden, and Jerzy Weyman for helpful discussions.

SS was supported by NSF grant DMS-2302149. KV was supported by NSF grant DMS-2202871.

\section{Preliminaries} \label{sec:prelim}

In this section we recall some background that will be essential for the rest of the paper. 

\subsection{Compositions and partitions}

  A \defi{composition} is a tuple of positive integers $(a_1,\dots,a_\ell)$, and it is a composition of $n$ if $a_1+\cdots + a_\ell=n$.
  
  A \defi{partition} is a tuple of non-negative integers $\lambda = (\lambda_1 , \dots , \lambda_n)$ with $\lambda_1 \geq \cdots \geq \lambda_n$. Its \defi{length}, denoted $\ell(\lambda)$, is the number of nonzero $\lambda_i$. We use the standard convention that two partitions are the same if they differ only in a sequence of trailing 0's. A \defi{Young diagram} of shape $\lambda$ is a left-justified diagram whose $i$th row (reading from top to bottom) has length $\lambda_i$. Given a partition $\lambda$, its \defi{transpose} $\lambda^T$ is the partition obtained by reading off the row lengths of the transposed Young diagram corresponding to $\lambda$. In a formula: $\lambda^T_i = \# \{j \mid \lambda_j \ge i\}$. We will implicitly identify partitions with their Young diagrams.

  The notation $(a^b)$ denotes a sequence of $b$ copies of $a$. Given partitions $\lambda,\mu$, we write $\mu \subseteq \lambda$ to mean $\mu_i \le \lambda_i$ for all $i$. The notation $\lambda/\mu$ denotes the skew diagram obtained by removing the Young diagram of $\mu$ from the Young diagram of $\lambda$.

\subsection{Schur functors}

   Let $D$ be a skew diagram. Define $\row (D)$ to be the tuple of integers listing off the row lengths of $D$ and $\col (D)$ the tuple of integers listing the column lengths of $D$. Given a vector space $V$, the \defi{Schur functor} $\bbs_D (V)$ is defined to be the image of a composition
   \[
     \bigwedge^{\col (D)} V \to V^{\otimes |D|} \to S^{\row (D)} (V),
   \]
   where, for an integer sequence $\alpha$, we use the shorthand $\bigwedge^\alpha V = \bigotimes_i \bigwedge^{\alpha_i} V$ and $S^\alpha V = \bigotimes_i S^{\alpha_i} V$, the first map is a tensor product of comultiplications in the exterior algebra, and the second map is a tensor product of symmetric power multiplications. The precise definition will not be necessary for the purposes of this paper; see \cite{akin1982schur} for details.

    Throughout this paper, we will be working over an algebraically closed field of characteristic $0$. That being said, the definition alluded to above is actually characteristic-free.

    The category of homogeneous degree $d$ polynomial functors admits a \defi{transpose functor} $\Omega$, which can be explicitly described on objects via
    \[
      \Omega (\bbs_\lambda) = \bbs_{\lambda^T} \quad \text{for all partitions} \ \lambda.
    \]
    In characteristic $0$, this is an exact, monoidal autoequivalence on the category of polynomial functors \cite[Proposition 7.4.3]{introtca}.

\subsection{PBW degeneration}

Let $\g$ be a Lie (super)algebra over $\bbc$. Its universal enveloping algebra is denoted $\uU(\g)$. Consider the tensor algebra $T(\g) \otimes \bbc[t]$ as a $\bbc[t]$-algebra and define $\uU_t (\g)$ to be its quotient by the two-sided ideal generated all terms of the form
    \[
      x \otimes y - y \otimes x - t [x,y] \qquad (\text{resp.} \ x \otimes y - (-1)^{|x| \cdot |y|}  y \otimes x - t[x,y])
    \]
    where $x,y \in \g$, and they are assumed to be homogeneous in the superalgebra case.
    For the following, see \cite{braverman1996poincare}.

\begin{theorem} \label{thm:PBWdegeneration}
  $\uU_t (\g)$ is a free $\bbc [t]$-module. In particular, the universal enveloping algebra $\uU (\g)$ admits a flat degeneration to the free (super)commutative algebra on $\g$.
\end{theorem}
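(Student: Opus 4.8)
The plan is to realize $\uU_t(\g)$ as a filtered/graded object and apply a diamond-lemma (Bergman) style argument, which is the standard route to the PBW theorem and its Rees-algebra-with-parameter refinement. First I would fix a totally ordered homogeneous basis $\{x_i\}_{i\in\Lambda}$ of $\g$ (for the super case, an ordered basis compatible with the $\bbz/2$-grading, with the convention that odd basis vectors square to the bracket term). The candidate $\bbc[t]$-basis of $\uU_t(\g)$ is the set of ordered monomials $x_{i_1}\cdots x_{i_r}$ with $i_1\le \cdots\le i_r$ (strictly increasing in the odd slots), each tensored with powers of $t$; one must show these span $\uU_t(\g)$ as a $\bbc[t]$-module and are $\bbc[t]$-linearly independent. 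Spanning is the easy direction: using the defining relations $x\otimes y = (\pm) y\otimes x + t[x,y]$ one rewrites any monomial in the tensor algebra, by a terminating induction on the lexicographic "number of inversions" (tie-broken by length, since the $t[x,y]$ term is shorter), into a $\bbc[t]$-combination of ordered monomials. The subtle point is that the rewriting must terminate and — for well-definedness of the reduction — the two ways of reordering a length-three product $x_ix_jx_k$ must agree, which is exactly where the Jacobi identity (resp. its super analogue) enters; here the presence of the formal parameter $t$ changes nothing, since $t$ is central and every relation is homogeneous of the same $t$-degree.

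For linear independence, the cleanest argument is to exhibit the ordered-monomial module together with an explicit $\bbc[t]$-algebra action of $T(\g)\otimes\bbc[t]$ on it by "ordered insertion," factoring through $\uU_t(\g)$; the image of $1$ under an arbitrary word then recovers its normal form, so distinct ordered monomials (over $\bbc[t]$) cannot collapse. This is the $t$-parametrized version of the usual construction of the left regular representation in proofs of PBW. Having established that $\uU_t(\g)$ is $\bbc[t]$-free on ordered monomials, freeness is immediate, and the two specializations follow by base change: setting $t=1$ gives $\uU_t(\g)\otimes_{\bbc[t]}\bbc[t]/(t-1)\cong\uU(\g)$, and setting $t=0$ gives $\uU_t(\g)\otimes_{\bbc[t]}\bbc[t]/(t)\cong \Sym(\g)$ (resp. the free supercommutative algebra $\Sym(\g_{\bar 0})\otimes\bigwedge\g_{\bar 1}$), since modulo $t$ the relations become pure (super)commutativity. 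Thus $\uU_t(\g)$ is the flat $\bbc[t]$-family interpolating between them.

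The main obstacle is the Jacobi-identity/associativity check in the super setting: one must verify that the rewriting system is confluent, i.e., that resolving the overlap ambiguity on $x_ix_jx_k$ both ways yields the same element of the free module, including all the signs $(-1)^{|x|\cdot|y|}$ and the "squares" of odd generators. This is a finite but sign-delicate computation, and it is really the only nontrivial content — everything else (termination, spanning, base change) is formal. Since the statement is attributed to \cite{braverman1996poincare}, in the write-up I would carry out the confluence verification carefully (or invoke the general PBW theorem for Lie superalgebras over a ring in which $2$ is invertible, applied to $\g\otimes_\bbc\bbc[t]$ with bracket scaled by $t$, noting $\bbc[t]$ is an integral domain containing $\bbq$) and then simply record the two specializations as corollaries.
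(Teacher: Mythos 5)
Your argument is correct, but note that the paper does not actually prove this statement: it simply cites \cite{braverman1996poincare} and uses the result as a black box. So the relevant comparison is with the route taken there. Braverman--Gaitsgory prove a general PBW-type theorem for nonhomogeneous quadratic algebras: if the homogeneous quadratic part $T(\g)/(\text{image of the relations at } t=0)$ is Koszul (here it is the free (super)commutative algebra $\Sym(\g)$, which is Koszul) and the inhomogeneous perturbation satisfies two compatibility conditions (which for $x\otimes y-(\pm)y\otimes x-t[x,y]$ reduce exactly to antisymmetry and the (super) Jacobi identity), then the filtered deformation has the expected associated graded, equivalently the Rees-type algebra $\uU_t(\g)$ is $\bbc[t]$-free. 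Your proposal instead runs the classical Bergman/diamond-lemma proof of PBW directly on the $t$-deformed presentation --- or, as you observe more slickly, applies ordinary PBW over the base ring $\bbc[t]$ to the Lie (super)algebra $\g\otimes\bbc[t]$ with bracket rescaled by $t$, which is legitimate since that bracket still satisfies the (super) Jacobi identity and the underlying $\bbc[t]$-module is free over a $\bbq$-algebra. Both arguments are complete; yours is more elementary and self-contained (an explicit ordered-monomial basis plus the confluence check on length-three overlaps), while the Koszul-deformation approach buys generality --- it handles arbitrary Koszul quadratic parts, not just (super)commutative ones, which is in the spirit of the rest of the paper. The only points worth being careful about in your write-up are the ones you already flag: the sign-sensitive confluence computation in the super case, including the relation $2x^{2}=t[x,x]$ for odd $x$ (harmless over $\bbc[t]\supset\bbq$), and the identification of the $t=0$ fiber with $\Sym(\g_{\bar 0})\otimes\bigwedge\g_{\bar 1}$.
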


\subsection{Zelevinsky's functor} \label{ss:zelevinsky}

Let $\g$ be a complex reductive Lie algebra. Pick a Cartan subalgebra $\fh$, Borel subalgebra $\fb$, and opposite Borel subalgebra $\fb_-$. Zelevinsky \cite{zelevinskii1987resolvents} constructed a functor from the category of locally finite $\g$-modules (i.e., $\g$-modules which are direct sums of finite-dimensional modules) to the category of vector spaces with several remarkable properties.

Namely, let $\fn_-=[\fb_-,\fb_-]$. Given a weight $\mu$ and a $\g$-representation $V$, define a functor $\Phi_{V,\mu}^\g$ from $\g$-representations to vector spaces by
\[
  \Phi_{V,\mu}^\g(M) = ((V \otimes M)/ \fn_-(V \otimes M))_\mu.
\]
where the subscript denotes taking the $\mu$-weight space.

We will only make use of the case when $\g$ is a product of $\gl_n$, so we will make the case $\gl_n$ more explicit here. In that case, we can take $\fh$ to be the subalgebra of diagonal matrices, $\fb$ to be the subalgebra of upper-triangular matrices, $\fb_-$ to be the subalgebra of lower-triangular matrices, and $\fn_-$ is then the subalgebra of strictly lower-triangular matrices.

Let $U$ be a locally finite $\gl_n(\bbc)$-representation. Given an integral weight $\alpha \in \bbz^n$, let $U_\alpha$ denote the weight space for $\alpha$, i.e., this is the span of all vectors $u \in U$ such that for all $x_1,\dots,x_n$, the diagonal matrix with entries $x_1,\dots,x_n$ acts as multiplication on $u$ by the scalar $\alpha_1x_1 + \cdots + \alpha_n x_n$. Next, given partitions $\lambda$ and $\mu$ such that $\mu \subseteq \lambda$, let $\bbs_{\lambda/\mu}(\bbc^n)$ denote the skew Schur functor and set
\[
  U[\lambda/\mu] = \hom_{\gl_n}(\bbs_{\lambda/\mu}(\bbc^n), U).
\]
If $\mu=0$, then $\bbs_\lambda(\bbc^n)$ is irreducible and this is the corresponding multiplicity space of $U$.

\begin{remark}
  In general, we have a non-canonical isomorphism
  \[
    U[\lambda/\mu] \cong \bigoplus_\nu U[\nu]^{\oplus c^\lambda_{\mu,\nu}}
  \]
  where $c^\lambda_{\mu,\nu}$ is a Littlewood--Richardson coefficient. 
\end{remark}

The symmetric group $S_n$ acts on $\bbz^n$ via permutations and we define the length of $\sigma \in S_n$ as
\[
  \ell(\sigma) = \#\{ i < j \mid \sigma(i) > \sigma(j) \}.
\]
Finally, we define a special weight $\rho = (n-1,n-2,\dots,1,0)$, and define the dotted action of $S_n$ on weights by
\[
  \sigma \bullet \alpha = \sigma(\alpha + \rho) - \rho.
\]

\begin{theorem}[Zelevinsky]
  Given $U$, and dominant integral weights $\lambda$, $\mu$, there exists an acyclic complex ${\bf F}(U)^{\lambda,\mu}_\bullet$, functorial in $U$, with terms
\[
  {\bf F}(U)^{\lambda,\mu}_i = \bigoplus_{\substack{\sigma \in S_n\\ \ell(\sigma)=i}} U_{\lambda - \sigma\bullet \mu}
\]
and
\[
  {\rm H}_0({\bf F}(U)^{\lambda,\mu}_\bullet) = U[\lambda/\mu].
\]
\end{theorem}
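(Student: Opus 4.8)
The plan is to realize ${\bf F}(U)^{\lambda,\mu}_\bullet$ as the result of applying the Zelevinsky functor $\Phi^{\gl_n}_{U,\lambda}$ to the BGG resolution of the finite-dimensional irreducible $\bbs_\mu(\bbc^n)$, and to read off both its terms and its homology from standard facts about $\fn_-$-homology (Kostant's theorem).

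First I would recall the BGG resolution: since $\mu$ is dominant integral, $L(\mu) := \bbs_\mu(\bbc^n)$ has a finite resolution $0\to C_{\binom{n}{2}}\to\cdots\to C_1\to C_0\to L(\mu)\to 0$ with $C_i = \bigoplus_{\ell(\sigma)=i} M(\sigma\bullet\mu)$, where $M(\nu) = \uU(\gl_n)\otimes_{\uU(\fb)}\bbc_\nu$ is the Verma module and the dot action is precisely $\sigma\bullet\nu = \sigma(\nu+\rho)-\rho$ with $\rho=(n-1,\dots,1,0)$. Next I would apply the right-exact functor $G := \Phi^{\gl_n}_{U,\lambda}$, i.e. $M\mapsto\big((U\otimes M)/\fn_-(U\otimes M)\big)_\lambda$. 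To identify $G(C_i)$, use the tensor identity $U\otimes M(\nu)\cong\uU(\gl_n)\otimes_{\uU(\fb)}(U\otimes\bbc_\nu)$; by PBW this is a free $\uU(\fn_-)$-module, so $(U\otimes M(\nu))/\fn_-(U\otimes M(\nu))\cong U\otimes\bbc_\nu$ as $\fh$-modules, whose $\lambda$-weight space is $U_{\lambda-\nu}$. Taking $\nu=\sigma\bullet\mu$ and summing over $\ell(\sigma)=i$ gives exactly $G(C_i)=\bigoplus_{\ell(\sigma)=i}U_{\lambda-\sigma\bullet\mu}$, the claimed $i$-th term; functoriality in $U$ is automatic because tensor product, $\fn_-$-coinvariants, and weight spaces are all functorial in $U$.

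For the homology, I would note that $-\otimes U$ is exact over the ground field, so $U\otimes C_\bullet\to U\otimes L(\mu)$ is still exact, and each $U\otimes C_i$ is $\uU(\fn_-)$-free by the tensor identity; hence $U\otimes C_\bullet$ is a $\uU(\fn_-)$-flat resolution of $U\otimes L(\mu)$, and since $G(M)=\big(\bbc\otimes_{\uU(\fn_-)}(U\otimes M)\big)_\lambda$ we get $\HH_i\big({\bf F}(U)^{\lambda,\mu}_\bullet\big)\cong\HH_i(\fn_-,U\otimes L(\mu))_\lambda$. Writing the locally finite module $U\otimes L(\mu)=\bigoplus_\nu L(\nu)^{\oplus m_\nu}$ with $m_\nu=\dim\hom_{\gl_n}(\bbs_\nu(\bbc^n),U\otimes\bbs_\mu(\bbc^n))$ and invoking Kostant's theorem in the form $\HH_i(\fn_-,L(\nu))\cong\bigoplus_{\ell(w)=i}\bbc_{w\bullet\nu}$, the $\lambda$-weight space picks out the pairs with $w\bullet\nu=\lambda$. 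Because $\lambda$ is dominant, $\lambda+\rho$ has strictly decreasing coordinates, so its dot-orbit is regular with unique dominant member $\lambda$; thus $\nu=w^{-1}\bullet\lambda$ dominant forces $w=e$ and $\nu=\lambda$, which yields $\HH_i=0$ for $i>0$ and $\HH_0\cong\bbc^{m_\lambda}=\hom_{\gl_n}(\bbs_\lambda(\bbc^n),U\otimes\bbs_\mu(\bbc^n))$. Finally I would identify this last space with $U[\lambda/\mu]$ using $\bbs_{\lambda/\mu}(\bbc^n)=\bigoplus_\nu\bbs_\nu(\bbc^n)^{\oplus c^\lambda_{\mu\nu}}$ together with $c^\lambda_{\mu\nu}=\dim\hom_{\gl_n}(\bbs_\lambda(\bbc^n),\bbs_\mu(\bbc^n)\otimes\bbs_\nu(\bbc^n))$ (equivalently, the Remark above).

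The routine ingredients are the BGG resolution and the weight bookkeeping; the step that needs care is the acyclicity, which hinges on two observations: the tensor identity makes every $U\otimes M(\sigma\bullet\mu)$ a free $\uU(\fn_-)$-module, so that $G$ applied to the BGG resolution genuinely computes $\HH_\bullet(\fn_-,U\otimes L(\mu))_\lambda$ rather than something larger; and the regularity of $\lambda+\rho$ forces all higher Kostant contributions to vanish in the $\lambda$-weight space. I expect the only real subtlety in the writeup to be lining up the conventions of the dot action and of Kostant's formula with Zelevinsky's normalization, which the small cases $n=1,2$ can be used to pin down.
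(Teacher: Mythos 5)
Your proposal is correct and follows exactly the route the paper indicates (it does not prove this theorem but cites Zelevinsky and remarks that ${\bf F}(U)^{\lambda,\mu}_\bullet$ is $\Phi^{\g}_{U,\lambda}$ applied to the BGG resolution of $\bbs_\mu(\bbc^n)$): the tensor identity plus PBW identifies the terms, and Kostant's theorem together with the regularity of $\lambda+\rho$ kills the higher homology and identifies ${\rm H}_0$ with $\hom_{\gl_n}(\bbs_\lambda(\bbc^n),U\otimes\bbs_\mu(\bbc^n))\cong U[\lambda/\mu]$. This is the standard argument and all the steps check out.
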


In the notation of \cite{zelevinskii1987resolvents}, the complex ${\bf F}(U)^{\lambda,\mu}_\bullet$ is obtained by applying the functor $\Phi_{U,\lambda}^\g$ to the BGG resolution of $\bbs_\mu(\bbc^n)$.

Since the construction is functorial, if an algebra $R$ acts on $U$ and commutes with the $\gl_n$-action, then ${\bf F}(U)^{\lambda,\mu}_\bullet$ is a complex of $R$-modules. This will be important for our applications.

\begin{remark}
  In \cite{zelevinskii1987resolvents}, Zelevinsky applies this construction to the symmetric algebra $U = S^\bullet(V \otimes \bbc^n)$. In that case, we have $U_\alpha = S^{\alpha_1}(V) \otimes \cdots \otimes S^{\alpha_n}(V)$ and $U[\lambda/\mu] \cong \bbs_{\lambda/\mu}(V)$. By taking the equivariant Euler characteristic of ${\bf F}(U)^{\lambda,\mu}_\bullet$, we get an expression for the skew Schur polynomial in terms of products of complete homogeneous symmetric polynomials, and this coincides with the classical Jacobi--Trudi identity.

  Heuristically, we can think of this construction as transforming the Cauchy identity (which $S^\bullet(V \otimes \bbc^n)$ encodes) into the Jacobi--Trudi identity, and this is how we will generalize it.

  This particular example was also considered by Akin in \cite{akin1988complexes,akin1992complexes}. Other examples were recently used in \cite{JTdet-var}.
\end{remark}

\section{Combinatorial aspects} \label{sec:combinatorics}

Let $G$ be a group and $A_1,A_2,\dots$ a sequence of finite-dimensional $G$-representations over an algebraically closed field ${\bf k}$, set $A_0 = {\bf k}$ to be the trivial representation, and set $A = \bigoplus_{d \ge 0} A_d$. We will keep this notation throughout the section.

We will work in the Grothendieck ring ${\rm K}(G)$ of finite-dimensional $G$-representations with the product induced by tensor product. Our cases of interest are when $G$ is either finite or a reductive algebraic group (and the $A_i$ are algebraic representations) in which case we can take ${\rm K}(G)$ to be the corresponding ring of characters. The class of a representation $W$ is denoted $[W] \in {\rm K}(G)$.

\subsection{Equivariant total positivity}

We begin our discussion by formulating an equivariant analogue of the notion of total positivity. These facts are likely well-known, but we could not find a convenient reference, so we will provide proofs when needed.

Consider the ${\rm K}(G)$-valued matrix $T_A$ whose rows and columns are indexed by the set of positive integers and whose entries are defined by
\[
  (T_A)_{i,j} = \begin{cases} [A_{i-j}] & \text{if $i\ge j$} \\ 0 & \text{else} \end{cases}.
\]
We call $T_A$ the \defi{Toeplitz matrix} of $A$. We say that $A$ is \defi{$G$-totally positive}, and that $([A_i])$ is a \defi{(normalized) $G$-P\'olya frequency sequence} (or \defi{normalized $G$-PF sequence}), if every square submatrix of $T_A$ is a non-negative element of ${\rm K}(G)$ (i.e., either 0 or represented by the class of a representation). Here, ``normalized'' refers to the fact that we require $A_0$ to be the trivial representation; we will drop the adjective normalized from now on since we will not discuss any other kind of PF-sequences. Note that by forgetting the $G$-action, we see that if $A$ is $G$-totally positive, then the sequence $(\dim A_i)_{i \ge 0}$ is a PF-sequence in the usual sense (see, for instance, \cite{brenti} for an introduction).

There is a universal setting for this in which $[A_1],[A_2],\dots$ are algebraically independent. Namely, let $\Lambda$ be the ring of symmetric functions in variables $x_1,x_2,\dots$ with integer coefficients, and let
\[
  h_d = \sum_{i_1\le\cdots\le i_d} x_{i_1}\cdots x_{i_d}
\]
be the complete homogeneous symmetric function of degree $d$. (See \cite[\S 7.5]{stanley}.) Then $\Lambda$ is a polynomial ring $\bbz[h_1,h_2,\dots]$ \cite[Corollary 7.6.2]{stanley} and we have a ring homomorphism
\[
  \psi \colon \Lambda \to {\rm K}(G), \qquad \psi(h_d) = [A_d].
\]
Define the matrix ${\bf T}$ by
\[
  {\bf T}_{i,j} = \begin{cases} h_{i-j} & \text{if $i\ge j$} \\ 0 & \text{else} \end{cases}.
\]

Given column indices $I = (i_1 < \cdots < i_r)$ and row indices $J = (j_1 < \cdots < j_r)$, let ${\bf T}_{J,I}$ be the corresponding submatrix. Define two partitions by $\lambda = (i_r-r,\dots,i_1-1)$ and $\mu = (j_r-r,\dots,j_1-1)$. Then by \cite[\S 7.16]{stanley}, we have
\[
  \det({\bf T}_{J,I}) = s_{\lambda/\mu}(x) = \sum_\nu c^{\lambda}_{\mu,\nu} s_\nu(x),
\]
where the $c^{\lambda}_{\mu,\nu}$ are Littlewood--Richardson coefficients, which are non-negative integers \cite[\S 7.A1.3]{stanley}. In particular, the minors of ${\bf T}$, or $T_A$ in general, can be written as non-negative integer linear combinations of minors where the row indices are always taken to be $1,\dots,r$.

Motivated by this discussion, given partitions $\lambda,\mu$ each having at most $r$ parts, we define
\[
  s^A_{\lambda/\mu} = \det ( [A_{\lambda_i-\mu_j-i+j}] )_{i,j=1}^r.
\]
This is independent of the choice of $r$; furthermore, the discussion above can be summarized as saying that $A$ is a $G$-PF sequence if and only if $s_\lambda^A \ge 0$ for all partitions $\lambda$.

In what follows, $\lambda/\mu$ is a horizontal strip if each column contains at most 1 box and the elementary symmetric functions are defined by
\[
  e_d = \sum_{1 \le i_1 < i_2 < \cdots < i_d} x_{i_1} x_{i_2} \cdots x_{i_d}.
\]

\begin{prop}\label{cor:fundamentalIdentities}
    Suppose $A$ is $G$-totally positive. Then, any symmetric function identity for the Schur polynomials $s_\lambda \in \Lambda$ must also hold for the K-classes $s_{\lambda}^A \in {\rm K}(G)$. In particular:
    \begin{enumerate}
        \item (Generalized Pieri rule) For all partitions $\lambda$ and integers $d \geq 1$, we have
          \[
            s_\lambda^A \cdot [A_d] = \bigoplus_{\substack{\mu \ \text{partition} \\ |\mu| - |\lambda| = d, \\
                \mu / \lambda \ \text{horizontal strip}}} s_\mu^A.
          \]

        \item (Transpose duality) If $n \ge \lambda_1$, 
          \[
            s_{\lambda / \mu}^A = \det ( \psi(e_{\lambda_i^T-\mu_j^T - i + j}))_{i,j=1}^n,
          \]
          where $e_d \in \Lambda$ denotes the $d$th elementary symmetric function.

        \item (Generalized Littlewood--Richardson rule) For any partitions $\mu \subseteq \lambda$, we have
          \[
            s_{\lambda / \mu}^A = \sum_{\nu} c_{\lambda,\mu}^\nu s_{\nu}^A,
          \]
        where $c_{\lambda,\mu}^\nu$ denotes the Littlewood--Richardson coefficient.
    \end{enumerate}
\end{prop}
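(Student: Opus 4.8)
The plan is to establish the "transfer principle" first and then deduce (1)--(3) as special cases. The key observation is that the assignment $s_\lambda \mapsto s_\lambda^A$ is, by definition, a specialization of symmetric functions: the map $\psi \colon \Lambda \to {\rm K}(G)$ with $\psi(h_d) = [A_d]$ is a ring homomorphism (since $\Lambda = \bbz[h_1,h_2,\dots]$ is a polynomial ring, so any choice of images of the $h_d$ extends uniquely), and the Jacobi--Trudi identity $s_{\lambda/\mu} = \det(h_{\lambda_i-\mu_j-i+j})$ shows that $\psi(s_{\lambda/\mu}) = s_{\lambda/\mu}^A$. So any polynomial identity among Schur functions that holds in $\Lambda$ — i.e., any identity expressible purely in terms of the $h_d$ and integer coefficients — is pushed forward by $\psi$ to the corresponding identity among the $s_\lambda^A$. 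The one subtlety is that identities in $\Lambda$ that involve \emph{subtraction} of Schur functions need not, a priori, make sense as identities of $K$-classes of \emph{representations}; but all three identities in question are "positive" (Pieri, Littlewood--Richardson, and the transpose formula, once rewritten — see below), so after applying $\psi$ both sides are genuine classes of representations, and the total-positivity hypothesis guarantees each $s_\lambda^A$ is itself such a class. This is where $G$-total positivity is used.

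For part (1), the classical Pieri rule $s_\lambda \cdot h_d = \sum_{\mu} s_\mu$, the sum over $\mu \supseteq \lambda$ with $\mu/\lambda$ a horizontal strip of size $d$ (see \cite[\S 7.15]{stanley}), is an identity in $\Lambda$ with all-positive integer coefficients. Applying $\psi$ and using $\psi(h_d)=[A_d]$, $\psi(s_\lambda)=s_\lambda^A$, $\psi(s_\mu)=s_\mu^A$ gives the stated formula in ${\rm K}(G)$; since the right-hand side is a sum of classes of representations (the $s_\mu^A$ are, by total positivity) it literally computes $[\,\cdot\,\otimes\,\cdot\,]$ on the left. Part (3) is identical: the skew Littlewood--Richardson expansion $s_{\lambda/\mu} = \sum_\nu c^\lambda_{\mu,\nu} s_\nu$ (\cite[\S 7.A1.3]{stanley}) has non-negative integer coefficients, so $\psi$ transports it directly.

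Part (2) requires one extra ingredient: the \emph{dual} Jacobi--Trudi (von N\"agelsbach--Kostka) identity $s_{\lambda/\mu} = \det(e_{\lambda_i^T - \mu_j^T - i + j})_{i,j=1}^n$ for $n \geq \lambda_1$ (\cite[\S 7.16]{stanley}), together with Newton's identities expressing each $e_d$ as a \emph{polynomial with integer coefficients} in $h_1,\dots,h_d$ (equivalently, the fact that the involution $\omega$ on $\Lambda$ swapping $e$'s and $h$'s is a ring automorphism, so $e_d \in \bbz[h_1,h_2,\dots]$). Thus $\psi(e_d) =: \psi(e_d)$ is a well-defined element of ${\rm K}(G)$ — note this is a virtual class in general, which is why the statement of (2) is phrased with $\psi(e_d)$ rather than claiming positivity of those entries — and applying $\psi$ to the dual Jacobi--Trudi determinant yields $s_{\lambda/\mu}^A = \det(\psi(e_{\lambda_i^T-\mu_j^T-i+j}))_{i,j=1}^n$ as an identity in ${\rm K}(G)$.

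I expect the main (really the only) conceptual obstacle to be making precise the passage from "identities in $\Lambda$" to "identities of $K$-classes": one must be careful that the transfer respects only \emph{polynomial} identities in the $h_d$ (so, e.g., one cannot directly transfer an identity phrased in terms of power sums or of ratios), and one must separate the claims that hold virtually in ${\rm K}(G)$ (which follow from $\psi$ being a ring map, needing no positivity) from the claims that the relevant classes are \emph{effective} (represented by honest representations), which is exactly the content of the $G$-totally-positive hypothesis via the earlier Littlewood--Richardson discussion. Once this bookkeeping is set up, (1), (3) are immediate and (2) costs only the citation of the dual Jacobi--Trudi identity and of $\omega$ being a ring automorphism.
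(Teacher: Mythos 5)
Your proposal is correct and is essentially the paper's own argument: the paper defines $s^A_{\lambda/\mu}$ as the image of $s_{\lambda/\mu}$ under the ring homomorphism $\psi\colon\Lambda\to{\rm K}(G)$, $\psi(h_d)=[A_d]$, and then simply cites the classical Pieri rule, dual Jacobi--Trudi identity, and Littlewood--Richardson rule from Stanley. Your extra bookkeeping about which identities transfer and where total positivity is used for effectivity is a reasonable elaboration of the same mechanism.
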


\begin{proof}
  These follow from the corresponding facts about symmetric functions. For (1), see \cite[Theorem 7.15.7]{stanley}, for (2), see \cite[Corollary 7.16.2]{stanley}, and for (3), see \cite[\S 7.A1.3]{stanley}.
\end{proof}

It follows from the Pieri rule that if $s_\lambda^A=0$ and $\lambda \subseteq\mu$, then $s_\mu^A=0$. In particular, the set $\{\lambda \mid s_\lambda^A=0\}$ is an order ideal in the set of partitions ordered by containment (i.e., Young's lattice). In the special case when this ideal is of the form $\{ \lambda \mid \lambda_{r+1} > s\}$ for some integers $r,s$, we will say that $A$ has \defi{Schur dimension $r|s$}. The reasoning for this notation comes from the study of superalgebras, as will be explained in Example~\ref{ex:schur-dim}.

\subsection{Operations preserving positivity}

Let $d$ be a fixed positive integer and define a new sequence of $G$-representations $A_{(d)}$ by $(A_{(d)})_i = A_{di}$. We will call this the \defi{$d$th Veronese sequence}.

\begin{prop} \label{prop:veronese}
  If $A$ is $G$-totally positive, then so is $A_{(d)}$. More precisely, if $\ell(\lambda), \ell(\mu) \le r$, then we have
  \[
    s_{\lambda/\mu}^{A_{(d)}} = s_{\alpha/\beta}^A
  \]
  where
  \[
    \alpha_i = d\lambda_i + (d-1)(r-i), \qquad \beta_j = d\mu_j + (d-1)(r-i).
  \]
\end{prop}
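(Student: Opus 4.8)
The plan is to reduce the statement to a purely combinatorial identity about skew Schur functions, since by Proposition~\ref{cor:fundamentalIdentities} any symmetric function identity that holds for the $s_{\lambda/\mu}$ in $\Lambda$ automatically transfers to the classes $s_{\lambda/\mu}^A \in {\rm K}(G)$. Thus it suffices to establish the claimed equality in the universal case, i.e.\ to show
\[
  s_{\lambda/\mu}^{A_{(d)}} = s_{\alpha/\beta}(h_1, h_3, h_5, \dots)?
\]
--- wait, more precisely: the entries of the defining determinant for $s_{\lambda/\mu}^{A_{(d)}}$ are $[(A_{(d)})_{\lambda_i-\mu_j-i+j}] = [A_{d(\lambda_i-\mu_j-i+j)}] = \psi(h_{d(\lambda_i-\mu_j-i+j)})$, so what we must prove is the identity in $\Lambda$
\[
  \det\!\big( h_{d(\lambda_i - \mu_j - i + j)} \big)_{i,j=1}^r \;=\; \det\!\big( h_{\alpha_i - \beta_j - i + j} \big)_{i,j=1}^r,
\]
where the right-hand side, by the Jacobi--Trudi identity, equals $s_{\alpha/\beta}$ provided $\alpha \supseteq \beta$ and $\alpha,\beta$ are genuine partitions. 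The first thing I would do is verify that, with $\alpha_i = d\lambda_i + (d-1)(r-i)$ and $\beta_j = d\mu_j + (d-1)(r-j)$ (note: the displayed ``$\beta_j$'' in the statement has a typo, $r-i$ should be $r-j$), these are weakly decreasing and satisfy $\alpha_i \ge \beta_i$, using $\lambda_i \ge \lambda_{i+1}$, $\lambda_i \ge \mu_i$ and $d \ge 1$.

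The heart of the matter is the exponent bookkeeping. Compute the $(i,j)$ exponent on the right: $\alpha_i - \beta_j - i + j = d\lambda_i + (d-1)(r-i) - d\mu_j - (d-1)(r-j) - i + j = d(\lambda_i - \mu_j) - (d-1)(i - j) - (i-j) = d(\lambda_i - \mu_j) - d(i-j) = d(\lambda_i - \mu_j - i + j)$. So the two determinants have \emph{literally the same matrix entries} — the shifts were chosen precisely so that the ``$(d-1)(r-i)$'' telescoping terms contribute exactly the missing factor of $d$ on the $-i+j$ part. Hence the two determinants are equal entry by entry, and the identity is immediate. The remaining point is to know that the right-hand determinant is a legitimate instance of the Jacobi--Trudi formula for a skew shape (so that it equals $s_{\alpha/\beta}$, hence is Schur-positive, hence lies in the non-negative cone of ${\rm K}(G)$ after applying $\psi$) and that the left-hand determinant is by definition $s_{\lambda/\mu}^{A_{(d)}}$; both are routine once the admissibility of $\alpha,\beta$ is checked.

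The only genuine obstacle, and it is minor, is confirming the inequalities $\alpha_1 \ge \alpha_2 \ge \cdots$ and $\beta \subseteq \alpha$ so that $s_{\alpha/\beta}$ is a bona fide skew Schur function; for $\alpha$ this reads $d\lambda_i + (d-1)(r-i) \ge d\lambda_{i+1} + (d-1)(r-i-1)$, i.e.\ $d(\lambda_i - \lambda_{i+1}) + (d-1) \ge 0$, which is clear, and similarly for $\beta$, while $\alpha_i - \beta_i = d(\lambda_i - \mu_i) \ge 0$ gives containment. One should also note that, just as $s_{\lambda/\mu}^A$ is independent of the choice of $r \ge \ell(\lambda),\ell(\mu)$ (trailing zero parts contribute rows/columns whose off-diagonal behavior does not change the determinant by the usual argument), both sides here are independent of $r$, so the particular $r$ does not matter. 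Finally, invoking Proposition~\ref{cor:fundamentalIdentities} one last time to pass from $\Lambda$ to ${\rm K}(G)$ completes the argument, and in particular the $d$th Veronese sequence $A_{(d)}$ is again $G$-totally positive since every $s_\lambda^{A_{(d)}}$ is realized as an $s_{\alpha/\beta}^A$, which is a non-negative combination of the $s_\nu^A \ge 0$ by part~(3) of that proposition.
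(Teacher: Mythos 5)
Your proof is correct and is essentially the paper's argument: both amount to the observation that the defining determinant of $s_{\lambda/\mu}^{A_{(d)}}$ has entries $[A_{d(\lambda_i-\mu_j-i+j)}]$, which coincide entry by entry with those of $s_{\alpha/\beta}^A$ (the paper phrases this as identifying a submatrix of $T_{A_{(d)}}$ with a submatrix of $T_A$ whose indices are scaled by $d$, which is the same bookkeeping). Your correction of the typo $\beta_j = d\mu_j+(d-1)(r-j)$ and your check that $\alpha,\beta$ are genuine partitions with $\beta\subseteq\alpha$ are both right; the initial detour through $\Lambda$ is unnecessary since the entrywise equality already holds in ${\rm K}(G)$, but it is harmless.
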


\begin{proof}
  This essentially follows from the definitions: $s_{\lambda/\mu}^{A_{(d)}}$ is the determinant of the submatrix of $T_{A_{(d)}}$ with columns $\lambda_r + 1 < \cdots < \lambda_1 + r$ and rows $\mu_r + 1 < \cdots < \mu_1 + r$, which, by definition, is the submatrix of $T_A$ with columns $d(\lambda_r+1) < \cdots < d(\lambda_1+r)$ and rows $d(\mu_r+1) < \cdots < d(\mu_1 + r)$. Its determinant is $s_{\alpha'/\beta'}^A$ where $\alpha'_i = \alpha_i+d-1$ and $\beta'_j = \beta_j + d-1$. But subtracting the same amount from all entries of $\alpha'$ and $\beta'$ does not affect the determinant; we have done this for notational simplicity.
\end{proof}

Now suppose that $A$ and $B$ are two sequences of $G$-representations. Define the \defi{tensor product sequence} by
\[
  (A \otimes B)_n = \bigoplus_{i=0}^n A_i \otimes B_{n-i}.
\]

\begin{prop}
  If $A$ and $B$ are $G$-totally positive, then so is $A \otimes B$. More precisely, we have
  \[
    s_{\lambda/\mu}^{A\otimes B} = \sum_{\mu \subseteq \nu \subseteq \lambda} s_{\lambda/\nu}^A s_{\nu/\mu}^B.
  \]
\end{prop}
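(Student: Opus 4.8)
The plan is to prove the displayed formula first — it holds for \emph{arbitrary} sequences of $G$-representations $A$, $B$, with no positivity hypothesis — and then read off $G$-total positivity of $A\otimes B$ as an immediate corollary. The starting point is that the Toeplitz matrix of the tensor product sequence factors: $(T_A T_B)_{i,j} = \sum_k (T_A)_{i,k}(T_B)_{k,j} = \sum_{\ell\ge 0}[A_\ell]\,[B_{i-j-\ell}] = [(A\otimes B)_{i-j}]$, so $T_{A\otimes B} = T_A T_B$; since ${\rm K}(G)$ is commutative this also equals $T_B T_A$. Equivalently, in terms of the ring $\Lambda = \bbz[h_1,h_2,\dots]$ of symmetric functions and its coproduct $\Delta\colon\Lambda\to\Lambda\otimes_\bbz\Lambda$ (with $\Delta(h_d)=\sum_i h_i\otimes h_{d-i}$), if $\psi_A,\psi_B\colon\Lambda\to{\rm K}(G)$ are the ring homomorphisms sending $h_d$ to $[A_d]$ and $[B_d]$ respectively, then $\psi_{A\otimes B} = \mathrm{mult}\circ(\psi_A\otimes\psi_B)\circ\Delta$: both sides are ring homomorphisms $\Lambda\to{\rm K}(G)$ (the right-hand side because ${\rm K}(G)$ is commutative), and they agree on the algebra generators $h_d$.

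Next I would evaluate this identity of ring homomorphisms on the skew Schur function $s_{\lambda/\mu}\in\Lambda$. Since $s_{\lambda/\mu}$ is the Jacobi--Trudi determinant $\det(h_{\lambda_i-\mu_j-i+j})$, for any (normalized) sequence $C$ we have $\psi_C(s_{\lambda/\mu}) = \det([C_{\lambda_i-\mu_j-i+j}]) = s^C_{\lambda/\mu}$; hence the left side gives $s^{A\otimes B}_{\lambda/\mu}$. For the right side, invoke the classical coproduct formula $\Delta(s_{\lambda/\mu}) = \sum_{\mu\subseteq\nu\subseteq\lambda} s_{\lambda/\nu}\otimes s_{\nu/\mu}$ (equivalently, $s_{\lambda/\mu}(x,y) = \sum_\nu s_{\lambda/\nu}(x)s_{\nu/\mu}(y)$; see \cite[\S7]{stanley}), which is carried by $\mathrm{mult}\circ(\psi_A\otimes\psi_B)$ to $\sum_{\mu\subseteq\nu\subseteq\lambda} s^A_{\lambda/\nu}\,s^B_{\nu/\mu}$. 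This is exactly the asserted formula. If one prefers to avoid Hopf-algebraic language, the same identity comes out of the Cauchy--Binet formula applied to $T_{A\otimes B} = T_A T_B$: a minor of the product is the (finite, by lower-triangularity) sum over intermediate index sets $K$ of $\det((T_A)_{I,K})\det((T_B)_{K,J})$, and the standard bijection between $r$-element index sets and partitions with at most $r$ parts (as recalled in the discussion preceding Proposition~\ref{cor:fundamentalIdentities}) rewrites this as $\sum_\nu s^A_{\lambda/\nu}s^B_{\nu/\mu}$; the terms with $\nu\not\subseteq\lambda$ or $\mu\not\subseteq\nu$ vanish because such a determinant has a rectangular block of zeros whose dimensions add up to more than its size.

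It then remains to deduce the first sentence of the proposition. Assume $A$ and $B$ are $G$-totally positive. Each term $s^A_{\lambda/\nu}$ is a minor of $T_A$, hence a non-negative element of ${\rm K}(G)$ by the definition of $G$-total positivity (alternatively, by Proposition~\ref{cor:fundamentalIdentities}(3) it is a non-negative integer combination of the $s^A_\kappa$, which are non-negative); likewise for $s^B_{\nu/\mu}$. A product of non-negative classes is non-negative (a tensor product of representations is a representation) and a sum of non-negative classes is non-negative, so $s^{A\otimes B}_{\lambda/\mu}\ge 0$ for all $\lambda,\mu$; taking $\mu=0$ shows $s^{A\otimes B}_\lambda\ge 0$ for every partition $\lambda$, i.e.\ $A\otimes B$ is $G$-totally positive. (The normalization $(A\otimes B)_0 = A_0\otimes B_0 = {\bf k}$ also holds.) I do not expect a genuine obstacle here; the only point that needs a little care is getting the summation range $\mu\subseteq\nu\subseteq\lambda$ right, and it is cleanest to inherit it directly from the coproduct formula for skew Schur functions rather than to chase it through the index-set bookkeeping in the Cauchy--Binet approach.
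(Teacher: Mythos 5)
Your proof is correct and, in its Cauchy--Binet form, is essentially identical to the paper's argument: factor $T_{A\otimes B}=T_AT_B$ and expand the minor over intermediate index sets $K$, with the support condition $\mu\subseteq\nu\subseteq\lambda$ coming from the vanishing of the other terms. The Hopf-algebraic phrasing via $\psi_{A\otimes B}=\mathrm{mult}\circ(\psi_A\otimes\psi_B)\circ\Delta$ and the coproduct of $s_{\lambda/\mu}$ is just an equivalent repackaging of the same identity, so there is nothing further to flag.
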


\begin{proof}
  First, we have $T_{A\otimes B} = T_A T_B = T_BT_A$.
  Pick $r \ge \ell(\lambda), \ell(\mu)$ and set $I = (\lambda_r+1 < \cdots < \lambda_1 + r)$ and $J = (\mu_r+1 < \cdots < \mu_1+r)$. The Cauchy--Binet formula gives:
  \[
    s_{\lambda/\mu}^{A \otimes B} = \det(T_{A \otimes B})_{J,I} = \det(T_B T_A)_{J,I} = \sum_K \det(T_B)_{J,K} \det(T_A)_{K,I}
  \]
  where the sum is over all subsets $K$ of size $r$. Each $K$ translates to some partition $\nu$; and the product of determinants is nonzero if and only if $\mu\subseteq \nu \subseteq \lambda$.
\end{proof}

\subsection{Examples}

We conclude this section with some examples illustrating the notions discussed so far.

\begin{example}\label{ex:segreNonpositive}
  Consider the universal case where $A = S^\bullet(V)$ and $B = S^\bullet(W)$ and consider their Segre (or Hadamard product) $C$ with $C_d = A_d \otimes B_d = S^d(V) \otimes S^d(W)$ with the action of $G = \GL(V) \times \GL(W)$. This is not $G$-totally positive since $s_{2,2,2}^C$ is not Schur-positive:
  \begin{align*}
    s_{2,2,2}^C &= \det \begin{pmatrix} 
h_{2} (x) h_{2} (y) & h_{3} (x) h_{3} (y) & h_{4} (x) h_{4} (y)\\
      h_{1} (x) h_{1} (y) & h_{2} (x) h_{2} (y) & h_{3}(x) h_{3} (y)\\
      1 & h_{1} (x) h_{1} (y) & h_{2} (x)h_{2} (y) \end{pmatrix}\\
    &=
      s_{2,2,2}(x)s_{6} (y)+\left(-s_{3,3}(x)+2s_{2,2,2} (x)\right)s_{5,1} (y)\\
                &\quad +\left(2s_{4,2} (x)+2s_{3,2,1} (x)+3s_{2,2,2} (x) \right)s_{4,2}(y)
                  +\left(-s_{3,3} (x)+s_{2,2,2} (x)\right)      s_{4,1,1} (y)\\
                &\quad +\left(-s_{5,1} (x)-s_{4,1,1} (x)+s_{2,2,2} (x)\right)s_{3,3} (y)
                  +\left(2s_{4,2} (x)+2s_{3,2,1}(x)+2s_{2,2,2} (x)\right)s_{3,2,1} (y)\\
                &\quad +\left(s_{6} (x)+2s_{5,1} (x)+3s_{4,2} (x)+s_{4,1,1} (x)+s_{3,3} (x)+2s_{3,2,1} (x)+s_{2,2,2} (x)\right)s_{2,2,2} (y).
  \end{align*}

  It is worth noting that if we take $G$ to be trivial, then $(\dim C_i)_{i \ge 0}$ \emph{is a PF sequence} \cite[Theorem 0.2]{wagner}. In particular, a PF sequence need not be totally positive at the level of characters for some nontrivial group action.
\end{example}

\begin{example} \label{ex:schur-dim}
  Let $V$ be a vector superspace with $\dim V_0=r$ and $\dim V_1=s$ and let $A_n = S^n(V)$ in the super sense (in ordinary notation, $A_n = \bigoplus_{i=0}^n S^i(V_0) \otimes \bigwedge^{n-i}(V_1)$) and take $G = \GL(V)$, the general linear supergroup. Then $s_\lambda^A$ is the hook Schur function in \cite[\S 6]{bereleregev}. The explicit formula given there in terms of semistandard tableaux shows that $s_\lambda^A=0$ if and only if $\lambda_{r+1} > s$, so that $A$ has Schur dimension $r|s$.
\end{example}

\begin{example}
  Let $V$ be a vector space and $A_n = V^{\otimes n}$ ($G$ will not be relevant here). Then $A$ has Schur dimension $1|0$ since $s_{1,1}^A = \det\begin{pmatrix} [V] & [V]^2 \\ 1 & [V] \end{pmatrix} = 0$.
\end{example}

\begin{example} \label{ex:ver-schurdim}
  Let $V$ be an $r$-dimensional vector space, fix a positive integer $d$, and $A_n = S^{nd} (V)$, which is the $d$th Veronese sequence of $A=S^\bullet (V)$ from Example~\ref{ex:schur-dim}. Take $G = \GL(V)$. We know that $s_\lambda^A = s_{\alpha/\beta}(x_1,\dots,x_r)$ where the latter is a skew Schur polynomial in $r$ variables and $\alpha,\beta$ are determined by Proposition~\ref{prop:veronese}. From its interpretation as a sum over semistandard Young tableaux \cite[\S 7.10]{stanley}, $s_{\alpha/\beta}(x_1,\dots,x_r) \ne 0$ if and only if each column of the skew diagram $\alpha/\beta$ contains at most $r$ boxes. This fails exactly when  $d\lambda_{r+1} > (d-1)r$.

  This shows that $A$ has Schur dimension $r|c$ where $c$ is the smallest integer such that $c+1 > r - \frac{r}{d}$. In particular, there are precisely three cases when $c \le 1$: (1) $d = 1$; (2) $r=2$; (3) $r=3$ and $d=2$. These are exactly the cases when the Veronese algebra $A$ has finite Cohen--Macaulay type.
\end{example}

\section{Algebraic aspects}  \label{sec:algebraic}

In this section, we will formulate an algebraic notion of positivity in terms of Jacobi--Trudi structures. This notion will turn out to be easier to work with for our purposes, but is actually equivalent to $G$-positivity, a fact which we will prove using Zelevinsky's functor. We also take a closer look at the relationship between $G$-positivity and Koszulness, establishing the fact that \emph{every} $G$-PF sequence is the equivariant Hilbert function of some $G$-equivariant Koszul algebra (see Theorem \ref{thm:GPFalgebraStructure}). 

\subsection{Jacobi--Trudi structures}\label{sec:JTstructures}

We continue to use the notation from the beginning of Section~\ref{sec:combinatorics}. The notion of $G$-total positivity is combinatorial, but our intended applications of this concept are algebraic. The next definition turns out to be equivalent to $G$-total positivity (at least when $G$ acts semisimply on tensor powers of $A$), but will allow for algebraic enhancements, which will be discussed later.

\begin{definition}[Jacobi--Trudi structure]\label{def:JTAlgebras}
  A \defi{Jacobi--Trudi structure} on $A$ is a $\GL_n$-action on $A^{\otimes n}$ for every $n > 0$ satisfying:
    \begin{enumerate}
        \item For every weight $\lambda \in \bbz^n$, we have
        $$(A^{\otimes n})_\lambda = A_{\lambda_1} \otimes A_{\lambda_2} \otimes \cdots \otimes A_{\lambda_n}.$$
        \item For every $d$, the action of $\GL_n$ on $A^{\otimes n}$ extends the action of $\GL_{n-1}$ on $A^{\otimes n-1} \otimes A_d \subset A^{\otimes n}$ where $\GL_{n-1}$ acts trivially on $A_d$. Here we embed $\GL_{n-1}$ into $\GL_n$ via $x \mapsto \begin{bmatrix} x & 0 \\ 0 & 1\end{bmatrix}$.
        \item $\GL_n$ commutes with the diagonal action of $G$ on $A^{\otimes n}$. \qedhere
      \end{enumerate}
    \end{definition}

\begin{remark}
  In our examples, $A$ will have an algebra structure, but we will not assume that the $\GL_n$-action is compatible with this algebra structure in any way, and in fact generally it will not be.
  \end{remark}

\begin{example}\label{ex:symJTstructure}
    The most fundamental example of a Jacobi--Trudi structure is given by the symmetric algebra on any vector superspace $V$ with $G=\GL(V)$; the Jacobi--Trudi structure comes from the isomorphisms
    \[
      S^\bullet (V \otimes \bbc^n) \cong S^\bullet (V)^{\otimes n}. \qedhere
    \]
\end{example}

\begin{example}\label{ex:tensorJTstructure}
    The tensor algebra $T^\bullet (V)$ with the natural action of $G=\GL(V)$ is another example of an algebra admitting a Jacobi--Trudi structure in an even simpler way. Consider the Segre product
    $$T^\bullet (V) \circ S^\bullet (\bbc^n) := \bigoplus_{d \geq 0} T^d (V) \otimes S^d (\bbc^n),$$
    where the generators $V \otimes \bbc^n$ are considered to have degree $1$. Then we claim that there is an isomorphism of $\bbc$-vector spaces
    $$T^\bullet (V) \circ S^\bullet (\bbc^n) \cong (T^\bullet (V))^{\otimes n},$$
    satisfying the properties $(1) - (3)$. First, note that the property $(1)$ translates into saying that for every weight $\lambda$ we have an isomorphism
    $$(T^\bullet(V) \circ S^\bullet (\bbc^n))_\lambda \cong T^{\lambda_1} (V) \otimes \cdots \otimes T^{\lambda_n} (V) \cong T^{|\lambda|} (V).$$
    However, the symmetric algebra has the property that for every $\lambda \in \bbz^n_{\ge 0}$ there is a unique basis vector of $S^\bullet (\bbc^n)$ with weight $\lambda$, so $(1)$ follows immediately since there is an isomorphism
    $$(T^\bullet (V) \circ S^\bullet (\bbc^n))_\lambda = T^{|\lambda|} (V) \otimes  \underbrace{x_1^{\lambda_1} \cdots x_n^{\lambda_n}}_{\in S^{|\lambda|} (\bbc^n)}.$$
    The property $(2)$ is even simpler since the induced $\GL_{n-1}$-action comes from the natural inclusion $S^\bullet (\bbc^{n-1}) \hookrightarrow S^\bullet (\bbc^n)$. 
\end{example}

\subsection{Jacobi--Trudi complexes}

Our main motivation for introducing the concept of a Jacobi--Trudi structure is that it gives a series of acyclic chain complexes whose Euler characteristics are the minors of the associated Toeplitz matrix $T_A$. To do this, we utilize Zelevinsky's functor from \S\ref{ss:zelevinsky}.

Suppose that $A$ has a Jacobi--Trudi structure. Let $\lambda,\mu$ be partitions of length at most $n$. We define
\[
  {\bf F}^{A,\lambda,\mu}_\bullet = {\bf F}(A^{\otimes n})^{\lambda,\mu}_\bullet,   \qquad \bbs_{\lambda/\mu}^A = (A^{\otimes n})[\lambda/\mu].
\]
Then we have
\begin{align*}
  {\bf F}^{A,\lambda,\mu}_i &=  \bigoplus_{\substack{\sigma \in S_n\\ \ell(\sigma)=i}} (A^{\otimes n})_{\lambda - \sigma \bullet \mu} = \bigoplus_{\substack{\sigma \in S_n\\ \ell(\sigma)=i}} A_{\lambda_1 - (\sigma \bullet \mu)_1} \otimes \cdots \otimes A_{\lambda_n - (\sigma \bullet \mu)_n}.
\end{align*}
Furthermore, ${\bf F}^{A,\lambda,\mu}_\bullet$ is acyclic and we have an isomorphism
\[
  {\rm H}_0({\bf F}^{A,\lambda,\mu}_\bullet) \cong \bbs_{\lambda/\mu}^A.
\]

\begin{example}
  The classical Cauchy identity \cite[\S 7.12]{stanley} reads
    $$S^\bullet (V \otimes \bbc^n) = \bigoplus_{\substack{\lambda \ \text{partition} \\ \ell (\lambda) \leq n }} \bbs_\lambda (V) \otimes \bbs_\lambda (\bbc^n),$$
    whence the skew Schur functors $\bbs_{\lambda/\mu}^{S^\bullet (V)}$ are the classically defined Schur functors $\bbs_{\lambda/\mu}(V)$.

    For the tensor algebra, the only nonzero Schur functors $\bbs_\lambda^{T^\bullet(V)}$ are when $\lambda = (d)$ has one row. In particular, 
    \[
      \bbs_{\lambda/\mu}^{T^\bullet (V)} = \begin{cases}
        T^{|\lambda|-|\mu|} (V) & \text{if $\lambda/\mu$ has at most one box per column} , \\
        0 & \text{otherwise}.
      \end{cases} \qedhere
      \]
\end{example}

\begin{prop} \label{prop:JT-det}
  The Euler characteristic of ${\bf F}^{A,\lambda,\mu}_\bullet$ can be expressed as an $n \times n$ determinant
  \[
    \chi({\bf F}^{A,\lambda,\mu}_\bullet) = \det ( [A_{\lambda_i-\mu_j - i + j}])_{i,j=1}^n = [\bbs_{\lambda/\mu}^A] = s^A_{\lambda/\mu},
  \]
  where we interpret $[A_d]=0$ if $d<0$.

  In particular, if $A$ admits a Jacobi--Trudi structure, then it is $G$-totally positive and 
  \[
    \dim (\bbs_{\lambda/\mu}^A) = \det ( \dim(A_{\lambda_i-\mu_j - i + j}) )_{i,j=1}^n.
  \]
\end{prop}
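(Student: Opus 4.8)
The plan is to compute the Euler characteristic of Zelevinsky's complex ${\bf F}^{A,\lambda,\mu}_\bullet$ in two different ways and compare. First I would record that this complex lives in the category of finite-dimensional $G$-representations: condition (3) of the Jacobi--Trudi structure says the $\GL_n$-action on $A^{\otimes n}$ commutes with the diagonal $G$-action, and Zelevinsky's functor is built purely from the $\GL_n$-structure, so ${\bf F}^{A,\lambda,\mu}_\bullet$ is a complex of $G$-modules; by condition (1) each term is the finite direct sum $\bigoplus_{\ell(\sigma)=i} A_{\lambda_1-(\sigma\bullet\mu)_1}\otimes\cdots\otimes A_{\lambda_n-(\sigma\bullet\mu)_n}$ of tensor products of finite-dimensional spaces, and since $\ell(\sigma)\le\binom n2$ for $\sigma\in S_n$ the complex is bounded. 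Hence $\chi({\bf F}^{A,\lambda,\mu}_\bullet):=\sum_i(-1)^i[{\bf F}^{A,\lambda,\mu}_i]$ is a well-defined element of ${\rm K}(G)$, and since (as already noted in the excerpt) the complex is acyclic with ${\rm H}_0\cong\bbs^A_{\lambda/\mu}$, the standard fact that the Euler characteristic of a bounded complex of finite-dimensional $G$-modules equals the alternating sum of its homology classes gives $\chi({\bf F}^{A,\lambda,\mu}_\bullet)=[\bbs^A_{\lambda/\mu}]$.

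The second computation is the direct one. Using condition (1) and $(-1)^{\ell(\sigma)}=\sgn(\sigma)$,
\[
  \chi({\bf F}^{A,\lambda,\mu}_\bullet)=\sum_{\sigma\in S_n}\sgn(\sigma)\,[A_{\lambda_1-(\sigma\bullet\mu)_1}]\cdots[A_{\lambda_n-(\sigma\bullet\mu)_n}].
\]
I would then unwind the dotted action: writing $\rho=(n-1,\dots,1,0)$ and $\sigma\bullet\mu=\sigma(\mu+\rho)-\rho$, one computes $\lambda_i-(\sigma\bullet\mu)_i=\lambda_i-\mu_{\sigma(i)}-i+\sigma(i)$, so (after relabeling the summation index by $\sigma^{-1}$ if the opposite convention for the $S_n$-action is in force, which changes neither $\sgn$ nor the index set) the sum is exactly the Leibniz expansion of $\det\big([A_{\lambda_i-\mu_j-i+j}]\big)_{i,j=1}^n$, where any entry $[A_d]$ with $d<0$ is read as $0$ --- consistent with the convention, since a weight with a negative coordinate has zero weight space in $A^{\otimes n}$. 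Since this determinant is by definition $s^A_{\lambda/\mu}$ (see \S\ref{sec:combinatorics}), comparing the two computations yields the chain $\chi({\bf F}^{A,\lambda,\mu}_\bullet)=\det([A_{\lambda_i-\mu_j-i+j}])_{i,j=1}^n=[\bbs^A_{\lambda/\mu}]=s^A_{\lambda/\mu}$.

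For the two ``in particular'' assertions: by the discussion preceding Proposition~\ref{cor:fundamentalIdentities}, every square submatrix of $T_A$ has determinant of the form $s^A_{\lambda/\mu}$ for suitable partitions $\lambda,\mu$ (equivalently, every minor of $T_A$ is a nonnegative integer combination of the $s^A_\lambda$), and the identity just proved exhibits $s^A_{\lambda/\mu}=[\bbs^A_{\lambda/\mu}]$ as the class of an honest $G$-representation, hence a nonnegative element of ${\rm K}(G)$; so $A$ is $G$-totally positive. Applying the ring homomorphism ${\rm K}(G)\to\bbz$, $[W]\mapsto\dim_\kk W$, to $[\bbs^A_{\lambda/\mu}]=\det([A_{\lambda_i-\mu_j-i+j}])_{i,j=1}^n$ gives the dimension formula.

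The substance of the proof is entirely carried by Zelevinsky's theorem (acyclicity together with the identification of ${\rm H}_0$ as the multiplicity space $\bbs^A_{\lambda/\mu}$) and by condition (1) of the Jacobi--Trudi structure. I do not expect a genuine obstacle; the one point demanding care is the combinatorial bookkeeping with the dotted $S_n$-action and the sign $(-1)^{\ell(\sigma)}=\sgn(\sigma)$, which is precisely what makes the alternating sum of weight-space classes reassemble into the Toeplitz minor.
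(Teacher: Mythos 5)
Your proof is correct and follows essentially the same route as the paper's: both unwind the dotted action $\sigma\bullet\mu=\sigma(\mu+\rho)-\rho$ to identify the terms of the Leibniz expansion of the determinant with the weight spaces $(A^{\otimes n})_{\lambda-\sigma\bullet\mu}$, with sign $(-1)^{\ell(\sigma)}=\sgn(\sigma)$, and then invoke acyclicity of Zelevinsky's complex to equate the Euler characteristic with $[\bbs^A_{\lambda/\mu}]$. The extra care you take over the well-definedness of $\chi$ in ${\rm K}(G)$ and the derivation of the two ``in particular'' clauses is fine and consistent with what the paper leaves implicit.
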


\begin{proof}
  Let $\sigma \in S_n$ be any permutation. Notice that by definition of the dot action, there is an equality 
  $$\sigma \bullet \mu = (\mu_{\sigma^{-1} (1)} + 1 - \sigma^{-1} (1) , \dots , \mu_{\sigma^{-1} (n)} +n - \sigma^{-1} (n)).$$
  With this in mind, set $x_{i,j} := [A_{\lambda_i - \mu_j + j -i}]$. By definition of the determinant there is an equality
  \begingroup\allowdisplaybreaks
  \begin{align*}
      \det (x_{i,j})_{i,j=1}^n &= \sum_{\sigma \in S_n} (-1)^{\ell (\sigma)} x_{1, \sigma^{-1} (1)} x_{2 , \sigma^{-1} (2)} \cdots x_{n, \sigma^{-1} (n)} \\
      &= \sum_{\sigma \in S_n} (-1)^{\ell (\sigma)} [A_{\lambda_1 -\mu_{\sigma^{-1} (1)} + \sigma^{-1} (1) -1}] [A_{\lambda_2 - \mu_{\sigma^{-1} (2)} +\sigma^{-1} (2) - 2}] \cdots [ A_{\lambda_n - \mu_{\sigma^{-1} (n)} +\sigma^{-1} (n) - n}] \\
      &= \sum_{\sigma \in S_n} (-1)^{\ell (\sigma)}[(A^{\otimes n})_{\lambda - \sigma \bullet \mu} ]. \qedhere
  \end{align*}
  \endgroup
\end{proof}
A priori, this construction depends on $n$, but this dependence can be removed.

\begin{prop}\label{prop:schurStability}
  If $n \ge \ell(\lambda),\ell(\mu)$, then we have a natural isomorphism
  \[
    A^{\otimes n}[\lambda/\mu] \cong A^{\otimes (n+1)}[\lambda /\mu].
  \]
\end{prop}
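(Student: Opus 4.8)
The plan is to compare the Jacobi--Trudi complex ${\bf F}^{A,\lambda,\mu}_\bullet$ attached to the $\GL_n$-action with the one attached to the $\GL_{n+1}$-action. Pad $\lambda$ and $\mu$ with a zero, writing $\lambda'=(\lambda_1,\dots,\lambda_n,0)$ and $\mu'=(\mu_1,\dots,\mu_n,0)$; these remain partitions of length $\le n$, and by the facts recalled above, ${\rm H}_0({\bf F}^{A,\lambda',\mu'}_\bullet)=A^{\otimes(n+1)}[\lambda/\mu]$ while ${\rm H}_0({\bf F}^{A,\lambda,\mu}_\bullet)=A^{\otimes n}[\lambda/\mu]$. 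I would prove the stronger assertion that ${\bf F}^{A,\lambda',\mu'}_\bullet={\bf F}^{A,\lambda,\mu}_\bullet$ as complexes, which yields the isomorphism upon taking ${\rm H}_0$; the identification is natural precisely because it is an equality of complexes compatible with the diagonal $G$-action, and more generally with the action of any algebra on $A$ that commutes with the $\GL$-actions.

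The heart of the matter is a vanishing statement: every summand $(A^{\otimes(n+1)})_{\lambda'-\sigma\bullet\mu'}$ of ${\bf F}^{A,\lambda',\mu'}_\bullet$ indexed by a $\sigma\in S_{n+1}$ with $\sigma(n+1)\ne n+1$ is zero. Using the formula for the dot action recorded in the proof of Proposition~\ref{prop:JT-det}, if $\sigma^{-1}(n+1)=k\le n$ then the $(n+1)$st coordinate of $\sigma\bullet\mu'$ equals $\mu'_k+(n+1-k)\ge 1$, so the $(n+1)$st coordinate of $\lambda'-\sigma\bullet\mu'$ is $\le -1$; by property (1) of the Jacobi--Trudi structure the corresponding weight space is then a tensor product having a factor $A_d$ with $d<0$, hence is $0$. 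Therefore only the $\sigma$ fixing $n+1$ — i.e. $\sigma\in S_n$ — contribute, and for those, property (1) together with $A_0={\bf k}$ identifies $(A^{\otimes(n+1)})_{\lambda'-\sigma\bullet\mu'}$ with $(A^{\otimes n})_{\lambda-\sigma\bullet\mu}$. Thus the two complexes have the same terms in every homological degree.

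It remains to match the differentials. For $\sigma,\tau\in S_n$ indexing adjacent homological degrees, the weight difference $\sigma\bullet\mu'-\tau\bullet\mu'$ is supported on the first $n$ coordinates, so the corresponding component of the BGG differential for $\gl_{n+1}$ is given by an element of the enveloping algebra of the nilradical of the Levi subalgebra $\gl_n\subseteq\gl_{n+1}$; by property (2) this element acts on the relevant weight spaces — which lie inside $A^{\otimes n}\otimes A_0$ — exactly as it does on $A^{\otimes n}$. Since $S_n$ is a lower order ideal in the Bruhat order of $S_{n+1}$, the $\gl_n$-BGG resolution is, up to a compatible normalization, the ``$S_n$-part'' of the $\gl_{n+1}$-one, so this component coincides with the corresponding component of the differential of ${\bf F}^{A,\lambda,\mu}_\bullet$. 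I expect this last bit of bookkeeping to be the only genuinely delicate point.

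One can sidestep the differential comparison entirely. First reduce to $\mu=0$ via the Littlewood--Richardson decomposition $\bbs_{\lambda/\mu}(\bbc^m)\cong\bigoplus_\nu\bbs_\nu(\bbc^m)^{\oplus c^\lambda_{\mu\nu}}$, valid for both $m=n$ and $m=n+1$ with the same coefficients since $c^\lambda_{\mu\nu}\ne 0$ forces $\ell(\nu)\le\ell(\lambda)\le n$. For a partition $\nu$ with $\ell(\nu)\le n$ and its zero-padding $\bar\nu$ to length $m\in\{n,n+1\}$, one has $A^{\otimes m}[\nu]=((A^{\otimes m})_{\bar\nu})^{\fn_+}$, because $A^{\otimes m}$ is a locally finite $\GL_m$-representation and a map out of the irreducible $\bbs_\nu(\bbc^m)$ is pinned down by a highest-weight vector. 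Now property (1) gives $(A^{\otimes(n+1)})_{\bar\nu}=(A^{\otimes n})_\nu$ as subspaces of $A^{\otimes n}\otimes A_0$; the extra simple root vector $E_{n,n+1}$ annihilates this subspace since it maps it into a weight space of $A^{\otimes(n+1)}$ whose $(n+1)$st coordinate is $-1$, which vanishes; and property (2) shows the remaining simple root vectors act as on $A^{\otimes n}$. Hence the two spaces of highest-weight vectors coincide, giving the natural isomorphism in this case and therefore, via the Littlewood--Richardson reduction, in general.
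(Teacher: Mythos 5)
Your second argument is correct and complete, and it takes a genuinely different route from the paper. The paper's proof is top--down: restriction along the canonical inclusion $\bbs_{\lambda/\mu}(\bbc^n)\subset\bbs_{\lambda/\mu}(\bbc^{n+1})$ gives a map $A^{\otimes(n+1)}[\lambda/\mu]\to A^{\otimes n}[\lambda/\mu]$ directly for the skew shape, injective because $\bbs_{\lambda/\mu}(\bbc^n)$ generates $\bbs_{\lambda/\mu}(\bbc^{n+1})$, and an isomorphism by the dimension count from Proposition~\ref{prop:JT-det}. Your version instead reduces to straight shapes via Littlewood--Richardson, identifies $A^{\otimes m}[\nu]$ with $\fn_+$-invariants of the $\bar\nu$-weight space, and shows these subspaces literally coincide for $m=n$ and $m=n+1$; this buys you an argument that never invokes the Euler characteristic computation and exhibits both sides as the same subspace of $A^{\otimes n}$, at the cost of routing through a non-canonical splitting of $\bbs_{\lambda/\mu}$ into irreducibles. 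That cost is harmless but worth a sentence: the composite isomorphism is independent of the splitting because changing it acts by an element of $\prod_\nu\GL_{c^\lambda_{\mu\nu}}$ on the multiplicity factors of both sides simultaneously, and this commutes with the block-diagonal map $\bigoplus_\nu\theta_\nu\otimes\mathrm{id}$. Your first argument (termwise comparison of the complexes ${\bf F}^{A,\lambda,\mu}_\bullet$) is also sound on terms --- the vanishing of the summands with $\sigma(n+1)\ne n+1$ is exactly right --- but the differential comparison you defer is genuinely the content of Akin's compatibility of BGG resolutions, which the paper only records later (Lemma~\ref{lem:akin} and Lemma~\ref{lem:JT-tensor}, applied to the composition $(n,1)$); since you supply the second, self-contained argument, this is not a gap in the proposal as a whole.
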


\begin{proof}
  We treat $\bbc^n$ as a subspace of $\bbc^{n+1}$ via $x \mapsto (x,0)$. Then there is a canonical inclusion $\bbs_{\lambda/\mu}(\bbc^{n}) \subset \bbs_{\lambda/\mu}(\bbc^{n+1})$.  Given a $\GL_{n+1}$-equivariant map $f \colon \bbs_{\lambda/\mu}(\bbc^{n+1}) \to A^{\otimes (n+1)}$, we restrict it to this subspace to get a $\GL_n$-equivariant map $\bbs_{\lambda/\mu}(\bbc^n) \to A^{\otimes (n+1)}$. The diagonal matrix in $\GL_{n+1}$ with entries $1,\dots,1,x$ acts trivially on $\bbs_{\lambda/\mu}(\bbc^n)$, and hence the image is contained in $A^{\otimes n} \otimes A_0$, which we identify with $A^{\otimes n}$. Thus, restriction defines a map
  \[
    A^{\otimes (n+1)}[\lambda/\mu] \to A^{\otimes n}[\lambda/\mu].
  \]
  This map is injective since $\bbs_{\lambda/\mu}(\bbc^n)$ generates $\bbs_{\lambda/\mu}(\bbc^{n+1})$ as a $\GL_{n+1}$-representation. It follows from Proposition~\ref{prop:JT-det} that $\dim A^{\otimes n}[\lambda/\mu] =  \dim A^{\otimes (n+1)}[\lambda/\mu]$, so the map above is an isomorphism.
\end{proof}

The following is immediate from the definitions above.

\begin{prop}[Generalized Cauchy identity]\label{lem:JTcauchyIdentity}
    Assume $A$ has a Jacobi--Trudi structure. Then there is a $\GL_n$-equivariant decomposition
    $$A^{\otimes n} \cong \bigoplus_{\substack{\lambda \ \text{partition} \\ \ell (\lambda) \leq n }} \bbs_\lambda^A \otimes \bbs_\lambda (\bbc^n).$$
    Furthermore, this decomposition can be chosen to be $G$-equivariant if we assume that $G$ acts semisimply on $A^{\otimes n}$.
\end{prop}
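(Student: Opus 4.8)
The plan is to build the $\GL_n$-equivariant decomposition directly out of the structure theory of polynomial $\GL_n$-representations, and then upgrade it to a $G$-equivariant decomposition using semisimplicity. Since $A^{\otimes n}$ is a (possibly infinite-dimensional, but graded-by-finite-dimensional-pieces) $\GL_n$-representation whose every weight space $(A^{\otimes n})_\lambda = A_{\lambda_1}\otimes\cdots\otimes A_{\lambda_n}$ is finite-dimensional and zero unless all $\lambda_i\ge 0$, the representation $A^{\otimes n}$ is a polynomial representation of $\GL_n$ (each graded piece in the internal $A$-grading is finite-dimensional and polynomial). Hence it is a direct sum of irreducibles $\bbs_\lambda(\bbc^n)$ with $\ell(\lambda)\le n$, each occurring with some multiplicity space $M_\lambda$, i.e.\ $A^{\otimes n}\cong\bigoplus_{\ell(\lambda)\le n} M_\lambda\otimes\bbs_\lambda(\bbc^n)$ with $M_\lambda = \hom_{\GL_n}(\bbs_\lambda(\bbc^n),A^{\otimes n}) = A^{\otimes n}[\lambda] = \bbs_\lambda^A$ by the very definition of $\bbs_\lambda^A$ given just before the proposition. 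This is exactly the asserted decomposition.

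First I would justify the semisimplicity/polynomiality claim: property (1) of the Jacobi--Trudi structure forces $(A^{\otimes n})_\lambda=0$ whenever some $\lambda_i<0$, so all weights lie in $\bbn^n$; combined with the fact that each $(A^{\otimes n})_\lambda$ is finite-dimensional and that the $\GL_n$-action preserves the internal $A$-grading (which it does, since the weight of a vector determines $|\lambda|=\sum\lambda_i$, the internal degree), we get that each internal-degree-$d$ graded piece $(A^{\otimes n})_d$ is a finite-dimensional polynomial $\GL_n$-representation. In characteristic $0$ these are completely reducible, so $(A^{\otimes n})_d\cong\bigoplus_\lambda M_{\lambda,d}\otimes\bbs_\lambda(\bbc^n)$, and summing over $d$ gives the decomposition with $M_\lambda=\bigoplus_d M_{\lambda,d}=\hom_{\GL_n}(\bbs_\lambda(\bbc^n),A^{\otimes n})=\bbs_\lambda^A$.

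Next, for the $G$-equivariant refinement: $G$ acts on $A^{\otimes n}$ commuting with $\GL_n$ (property (3)), so $G$ acts on each multiplicity space $M_\lambda=\hom_{\GL_n}(\bbs_\lambda(\bbc^n),A^{\otimes n})$, and the canonical evaluation map $\bigoplus_\lambda M_\lambda\otimes\bbs_\lambda(\bbc^n)\to A^{\otimes n}$ is both $\GL_n$- and $G$-equivariant. If $G$ acts semisimply on $A^{\otimes n}$, then this map is an isomorphism of $(G\times\GL_n)$-representations: it is injective and surjective as a map of $\GL_n$-representations by the previous paragraph, and equivariance for the commuting $G$-action is automatic since $G$ acts on $M_\lambda$ by functoriality of $\hom_{\GL_n}(-,A^{\otimes n})$ in the second argument. (Alternatively one can decompose $A^{\otimes n}$ as a $G\times\GL_n$-representation directly and match isotypic pieces.) This gives the $G$-equivariant decomposition.

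The main obstacle — really the only subtlety — is the bookkeeping around infinite-dimensionality: one must be careful that $A^{\otimes n}$ is an honest (locally finite) polynomial $\GL_n$-representation rather than something pathological, which is why I would phrase everything internal-degree by internal-degree using the fact that weights of $A^{\otimes n}$ have fixed coordinate sum on each internal-degree piece. Everything else is a direct unwinding of the definitions of $\bbs_\lambda^A = A^{\otimes n}[\lambda]$ and of complete reducibility of polynomial representations in characteristic $0$; the proposition is labelled "immediate from the definitions," and indeed the content is just assembling the general Cauchy-type decomposition of a polynomial representation into its isotypic components and observing that the multiplicity spaces are precisely the objects named $\bbs_\lambda^A$.
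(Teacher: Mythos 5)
Your proof is correct and matches the paper's (unstated) argument: the paper declares this proposition immediate from the definitions, and the content is exactly what you supply — $A^{\otimes n}$ is a locally finite polynomial $\GL_n$-representation by property (1), so it decomposes into isotypic pieces whose multiplicity spaces are by definition $\bbs_\lambda^A$, with $G$-equivariance coming from the commuting action in property (3). If anything your argument shows slightly more than asked, since the canonical evaluation map is $G$-equivariant without invoking semisimplicity.
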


\begin{prop}\label{prop:JTandSchurEquivalence}
    Assume that $G$ acts semisimply on each tensor power $A^{\otimes n}$. Then the following are equivalent:
    \begin{enumerate}
        \item $A$ admits a Jacobi--Trudi structure.
        \item $A$ is $G$-totally positive.
    \end{enumerate}
\end{prop}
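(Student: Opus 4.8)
The implication $(1)\Rightarrow(2)$ is already contained in Proposition~\ref{prop:JT-det}, so the content is $(2)\Rightarrow(1)$, for which I would use the shape of the generalized Cauchy identity (Proposition~\ref{lem:JTcauchyIdentity}) as a blueprint for \emph{building} a $\GL_n$-action. Assume $A$ is $G$-totally positive. For every partition $\lambda$ the class $s_\lambda^A=\psi(s_\lambda)$ is non-negative; expanding the determinant defining it as in the proof of Proposition~\ref{prop:JT-det} also writes it as a $\bbz$-combination of classes of the $G$-summands $A_{\alpha_1}\otimes\cdots\otimes A_{\alpha_n}$ of $A^{\otimes n}$ (for $n=\ell(\lambda)$), all semisimple by hypothesis, so $s_\lambda^A$ is the class of an actual semisimple $G$-representation $\bbs_\lambda^A$, unique up to isomorphism. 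Fix such a $\bbs_\lambda^A$ for each $\lambda$ and set
\[
  M_n \;=\; \bigoplus_{\ell(\lambda)\le n}\bbs_\lambda^A\otimes\bbs_\lambda(\bbc^n),
\]
a $G\times\GL_n$-module with $G$ acting on the left factors and $\GL_n$ on the right.

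The numerical heart is that the torus-weight spaces of $M_n$ carry the right $G$-classes: for a composition $\mu\in\bbz_{\ge0}^n$, applying $\psi$ to the classical Kostka expansion $h_{\mu_1}\cdots h_{\mu_n}=\sum_\lambda K_{\lambda\mu}\,s_\lambda$ in $\Lambda$ (the sum running automatically over $\ell(\lambda)\le n$) gives
\[
  [A_{\mu_1}]\cdots[A_{\mu_n}] \;=\; \sum_{\ell(\lambda)\le n}K_{\lambda\mu}\,s_\lambda^A \;=\; \bigl[(M_n)_\mu\bigr].
\]
Since $A_{\mu_1}\otimes\cdots\otimes A_{\mu_n}$ is a $G$-submodule of the semisimple module $A^{\otimes n}$ and $(M_n)_\mu$ is visibly semisimple, there is a $G$-isomorphism $(M_n)_\mu\cong A_{\mu_1}\otimes\cdots\otimes A_{\mu_n}$.

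Finally I would build, by induction on $n$, $G$-isomorphisms $\phi_n\colon M_n\xrightarrow{\,\sim\,}A^{\otimes n}$ that are \emph{graded} (sending $(M_n)_\mu$ onto the literal subspace $A_{\mu_1}\otimes\cdots\otimes A_{\mu_n}$) and \emph{coherent} (the restriction of $\phi_n$ to the subspace on which the last torus coordinate acts by $0$ agrees with $\phi_{n-1}$ under the canonical identifications $(M_n)_{(\mu',0)}\cong(M_{n-1})_{\mu'}$, coming from $\bbs_\lambda(\bbc^n)_{(\mu',0)}\cong\bbs_\lambda(\bbc^{n-1})_{\mu'}$, and $A^{\otimes(n-1)}\otimes A_0\cong A^{\otimes(n-1)}$). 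Given $\phi_{n-1}$, such a $\phi_n$ exists: decompose both sides by torus weight, use the previous paragraph on each weight space, and on weights with vanishing last coordinate take the restriction of $\phi_{n-1}$. Pulling back the $\GL_n$-action on $M_n$ through $\phi_n$ defines a $\GL_n$-action on $A^{\otimes n}$; property~(1) holds because $\phi_n$ is graded, property~(2) because $\GL_{n-1}\subseteq\GL_n$ preserves the last torus weight and $\phi_n$ extends $\phi_{n-1}$ on its weight-$0$ part, and property~(3) because $G$ and $\GL_n$ act on disjoint tensor factors of $M_n$ while $\phi_n$ is $G$-equivariant. The one genuinely delicate point is property~(2): it is an inductive constraint tying level $n$ to level $n-1$, so the $\phi_n$ must be fitted together coherently rather than chosen independently, which forces the bookkeeping matching the $\GL_{n-1}$-branching of $\bbs_\lambda(\bbc^n)$ against the internal grading of $A^{\otimes n}$; everything else is formal once the Kostka/Cauchy identity is in hand.
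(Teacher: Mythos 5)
Your proposal is correct and follows essentially the same route as the paper: both prove $(2)\Rightarrow(1)$ by applying $\psi$ to the Cauchy identity, realizing each non-negative class $s_\lambda^A$ by an actual $G$-representation $\bbs_\lambda^A$, and transporting the evident $\GL_n$-action on $\bigoplus_\lambda \bbs_\lambda^A\otimes\bbs_\lambda(\bbc^n)$ across a $G$-isomorphism with $A^{\otimes n}$. Your extra bookkeeping (the Kostka expansion of the weight spaces and the coherent choice of the $\phi_n$ to secure conditions (1) and (2) of Definition~\ref{def:JTAlgebras}) makes explicit details the paper's proof leaves implicit, but it is the same argument.
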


\begin{proof}
  $(1) \implies (2)$: This follows from Proposition~\ref{prop:JT-det}.

  $(2) \implies (1)$: Let $\Lambda(x)$ be the ring of symmetric functions in $x_1,x_2,\dots$ and let $\Lambda(y)$ be the ring of symmetric polynomials in $y_1,\dots,y_n$. Consider the map
  \[
    \psi \colon \Lambda(x) \otimes \Lambda(y) \to {\rm K}(G) \otimes \Lambda(y), \qquad h_d(x) \otimes f(y) \mapsto [A_d] \otimes f(y).
  \]
  By the Cauchy identity, we have 
  \[
    \sum_\lambda s_\lambda(x) s_\lambda(y) = \prod_{i=1}^n \prod_{j \ge 1} \frac{1}{1-x_jy_i} = \prod_{i=1}^n \left( \sum_{d \ge 0} h_d(x) y_i^d \right).
  \]
  Applying $\psi$ to this identity gives
  \[
    \sum_\lambda s_\lambda^A s_\lambda(y) = \prod_{i=1}^n \left( \sum_{d \ge 0} [A_d] y_i^d \right).
  \]
  Now let $\bbs_\lambda^A$ be any $G$-representation such that $[\bbs_\lambda^A]=s_\lambda^A$ (by assumption, such a representation exists). Then the identity above shows that $\bigoplus_\lambda \bbs_\lambda^A \otimes \bbs_\lambda(\bbc^n) \cong A^{\otimes n}$ as $G$-representations, and the left side is evidently a representation of $\GL_n$.
\end{proof}

\subsection{Koszul algebras and Jacobi--Trudi structures}

This section requires more Lie theory background than the rest of the paper, but it is also independent of the rest of the paper, so can be skipped without loss of continuity.

Let $\kk$ be a field of characteristic 0. A non-negatively-graded $\kk$-algebra $A$ is \defi{Koszul} if the residue field $\kk$ has a linear minimal free resolution over $A$, i.e., $\Tor^A_i(\kk,\kk)$ is concentrated in (internal) degree $i$ for all $i \ge 0$. If $G$ is a group acting on $A$, its equivariant Hilbert function is the sequence $([A_i])_{i \geq 0}$. In this section, we clarify the relationship between equivariant Hilbert functions of Koszul algebras and $G$-PF sequences.

Given a Koszul algebra, one can form the quadratic dual $A^! \coloneq \ext_A^\bullet (\kk , \kk)$; this algebra can be described more concretely as a quotient of the tensor algebra by the quadratic relations ``orthogonal'' to the quadratic relations defining $A$ (see \cite{polishchuk2005quadratic} for a rigorous introduction to Koszul algebras). 

\begin{notation}
  Let $\g := \gl_n$, let $\fh$ be the subalgebra of diagonal matrices, and let $\fb$ be the subalgebra of upper-triangular matrices. Then $\fh$ is isomorphic to a quotient of $\fb$, so any representation of $\fh$ can be pulled back to a representation of $\fb$. For $w \in S_n$, the Verma module for the weight $w \bullet(0) = w \cdot \rho - \rho$ (with $\rho = (n-1,n-2,\dots,1,0)$) is
  \[
    M^\g(w) = \uU(\g) \otimes_{\uU (\fb)} \bbc_{w \bullet (0)}.
  \]
  The notation $\BGG_n$ will denote the BGG resolution \cite{bernstein1971structure} of $\bbc$ as a $\uU (\g)$-module. We have
  \[
    (\BGG_n)_i = \bigoplus_{\substack{w \in S_n\\ \ell(w)=i}} M^\g(w).
  \]
  More generally, for any composition ${\bf a}=(a_1,\dots,a_\ell)$ of $n$, we let $\g_{\bf a} = \gl_{a_1} \times \cdots \times \gl_{a_\ell}$ denote the block diagonal matrices with block sizes $a_1,\dots,a_\ell$ and let $\fp_{\bf a}$ denote the block upper-triangular matrices. Then $\g_{\bf a}$ is a quotient Lie algebra of $\fp_{\bf a}$, so any representation $V$ of $\g_{\bf a}$ pulls back to one of $\fp_{\bf a}$. We let
  \[
    \ind_{\fp_{\bf a}}^{\g} V = \uU(\g) \otimes_{\uU(\fp_{\bf a})} V. \qedhere
  \]
 \end{notation}
  
  By transitivity of base change, for any $w=(w_1,\dots,w_\ell) \in S_{a_1}\times \cdots \times S_{a_\ell}$, we have
  \[
    \ind_{\fp_{\bf a}}^{\gl_n}( M^{\gl_{a_1}}(w_1) \otimes \cdots \otimes M^{\gl_{a_\ell}}(w_\ell)) \cong M^{\gl_n}(w). \qedhere
  \]

\begin{lemma}[Akin] \label{lem:akin}
    Given any composition ${\bf a}=(a_1 , \dots , a_\ell)$ of $n$, there is a subcomplex $\BGG_{{\bf a}} \subset \BGG_n$ of $\uU(\g_{\bf a})$-modules obtained by restricting to the sum of the terms $M^\g(w)$ where $w \in S_{a_1} \times \cdots \times S_{a_\ell} \subset S_n$. 

    There is moreover an isomorphism of complexes
    \[
      \BGG_{{\bf a}} \cong \ind_{\fp_{\bf a}}^{\g} (\BGG_{a_1} \otimes \cdots \otimes \BGG_{a_\ell}).
  \]
\end{lemma}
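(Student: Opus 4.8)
The plan is to prove the two assertions in sequence, using the combinatorial description of $\BGG_n$ via Verma modules and the behavior of induction under composition of inductions.

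First I would verify that the sum $\BGG_{\mathbf a}$ of the terms $M^\g(w)$ with $w \in S_{a_1} \times \cdots \times S_{a_\ell}$ is a subcomplex of $\BGG_n$ as a complex of $\uU(\g_{\mathbf a})$-modules. The differential of the BGG resolution is a sum of maps $M^\g(w) \to M^\g(w')$ over pairs $w, w'$ with $\ell(w') = \ell(w)+1$ and $w \leq w'$ in Bruhat order (restricted to those pairs joined by an edge); each such map is a scalar multiple of the canonical inclusion of Verma modules (which is $\g$-equivariant, hence $\g_{\mathbf a}$-equivariant). The key point is that the parabolic subgroup $S_{\mathbf a} := S_{a_1} \times \cdots \times S_{a_\ell}$ is a \emph{lower order ideal} in $(S_n, \leq)$ with respect to the relevant edges: more precisely, if $w' \in S_{\mathbf a}$ and $w \leq w'$ is obtained by removing a box (i.e., $w = w' s_\beta$ for a suitable reflection with $\ell(w) = \ell(w')-1$), I must check that $w \in S_{\mathbf a}$ as well. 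Since $S_{\mathbf a}$ is a parabolic (standard) subgroup generated by simple reflections, and the elements of length $\leq$ that appear as sources of differentials into $M^\g(w')$ all lie in $w' W_{\mathbf a}'$-type cosets governed by the combinatorics of the diagram, this follows from the standard fact that $S_{\mathbf a}$ is closed downward under the weak (and Bruhat) order among elements all of whose expressions use only simple reflections in $S_{\mathbf a}$. Concretely, the differential in $\BGG_n$ restricted to $\bigoplus_{w \in S_{\mathbf a}} M^\g(w)$ lands in $\bigoplus_{w \in S_{\mathbf a}} M^\g(w)$ because the maps out of $M^\g(w)$ with $w\in S_{\mathbf a}$ go to $M^\g(w')$ with $w' = w s_i$ for a simple reflection $s_i$ appearing in a reduced word for some longer element of $S_{\mathbf a}$ — and one checks these are exactly the simple reflections generating $S_{\mathbf a}$ together with nothing else, because adding a box that crosses a block boundary would produce an element outside $S_{\mathbf a}$, but such edges are not present since $\ell$ increases by exactly $1$. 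This gives the subcomplex claim.

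Next, for the isomorphism $\BGG_{\mathbf a} \cong \ind_{\fp_{\mathbf a}}^{\g}(\BGG_{a_1} \otimes \cdots \otimes \BGG_{a_\ell})$, I would argue degree by degree and then check compatibility with differentials. In homological degree $i$, the right-hand side is
\[
  \ind_{\fp_{\mathbf a}}^{\g} \Bigl( \bigoplus_{\substack{w_1,\dots,w_\ell \\ \sum \ell(w_k) = i}} M^{\gl_{a_1}}(w_1) \otimes \cdots \otimes M^{\gl_{a_\ell}}(w_\ell) \Bigr),
\]
and since induction is exact and commutes with direct sums, this equals $\bigoplus_{\sum\ell(w_k)=i} \ind_{\fp_{\mathbf a}}^\g(M^{\gl_{a_1}}(w_1)\otimes\cdots\otimes M^{\gl_{a_\ell}}(w_\ell))$. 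By the transitivity-of-base-change identity stated just before the lemma, $\ind_{\fp_{\mathbf a}}^{\gl_n}(M^{\gl_{a_1}}(w_1)\otimes\cdots\otimes M^{\gl_{a_\ell}}(w_\ell)) \cong M^{\gl_n}(w)$ where $w = (w_1,\dots,w_\ell) \in S_{\mathbf a}$, and $\ell(w) = \sum_k \ell(w_k)$ since lengths add across a parabolic decomposition. Hence the degree-$i$ term of the right-hand side is $\bigoplus_{w\in S_{\mathbf a},\,\ell(w)=i} M^{\gl_n}(w)$, which is exactly $(\BGG_{\mathbf a})_i$. To promote this termwise identification to an isomorphism of complexes, I would observe that the differential on $\ind_{\fp_{\mathbf a}}^\g(\BGG_{a_1}\otimes\cdots\otimes\BGG_{a_\ell})$ is the one induced (via exactness of $\ind$) from the total differential of the tensor product of the $\BGG_{a_k}$, which on each tensor factor is, up to sign, the canonical Verma inclusion; under the identification $\ind_{\fp_{\mathbf a}}^\g(\cdots) \cong M^{\gl_n}(w)$ these canonical inclusions are carried to the canonical Verma inclusions $M^{\gl_n}(w)\hookrightarrow M^{\gl_n}(w')$, and the structure constants match those of $\BGG_n$ because in both cases they are the unique (up to the usual sign bookkeeping) nonzero $\g$-morphisms between the relevant Vermas — here one invokes that $\Hom_\g(M^\g(w),M^\g(w'))$ is at most one-dimensional when $\ell(w')=\ell(w)+1$ and that the BGG differential is determined by a choice of signs making $d^2=0$, which can be arranged compatibly. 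Thus the two complexes are isomorphic.

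The main obstacle I anticipate is the \textbf{bookkeeping of signs and the verification that the structure constants of the two differentials genuinely agree}, rather than merely that each term-map is a scalar multiple of a canonical Verma inclusion. The cleanest route is probably to fix, once and for all, the standard sign conventions for the BGG resolution (as in \cite{bernstein1971structure}) so that the differential on $\BGG_{a_1}\otimes\cdots\otimes\BGG_{a_\ell}$ is the tensor-product (Koszul-sign) differential, then note that applying the exact functor $\ind_{\fp_{\mathbf a}}^\g$ and the transitivity isomorphism is functorial and hence automatically respects these differentials; the only thing left is to identify the resulting complex with the \emph{honest} subcomplex $\BGG_{\mathbf a}\subset\BGG_n$, which one can do by uniqueness — both are complexes of the same Verma modules with differentials built from canonical inclusions and having homology $\bbc$ in degree $0$ (for the induced complex, because $\ind$ is exact and $\mathrm H_0(\BGG_{a_k})=\bbc$, so $\mathrm H_*$ of the tensor product is $\bbc$ in degree $0$ and induces to the appropriate parabolic Verma, while one separately identifies $\mathrm H_0(\BGG_{\mathbf a})$). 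I would cite \cite{akin1988complexes,akin1992complexes} for the precise form of this identification if a reference handling the sign normalization directly is available, and otherwise carry out the sign check explicitly in an appendix-style remark.
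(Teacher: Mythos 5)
The paper does not actually prove this lemma: it is attributed to Akin and the subsequent Remark defers precisely the delicate point — the choice of signs making the termwise identification into an identification of \emph{complexes} — to \cite[Proof of Theorem 2]{akin1992complexes}. Your sketch is therefore doing more than the paper does, and its skeleton is sound: the termwise identification via transitivity of induction together with additivity of length across the parabolic factors is exactly right, and the subcomplex claim reduces, as you say, to the fact that a standard parabolic subgroup $S_{a_1}\times\cdots\times S_{a_\ell}$ is a lower order ideal in Bruhat order (by the subword property), so no differential component can leave the sub-sum. Two cautions. First, your discussion of which edges occur is garbled in places: the arrows of the BGG resolution are indexed by Bruhat \emph{covers} $w\lessdot w'$, which arise from arbitrary reflections $s_\beta$, not only simple ones, and the differential carries $M^\g(w')$ to the terms indexed by \emph{shorter} elements; none of this damages the argument, since the order-ideal property is the only input needed, but the sentences about ``$w'=ws_i$ for a simple reflection'' should be excised or corrected. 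Second, and more substantively, your ``uniqueness'' step — that two complexes on the same Verma terms whose components are nonzero multiples of the canonical inclusions must be isomorphic — requires the standard rigidity statement that any two all-nonzero scalar systems satisfying the anticommuting-square condition differ by a diagonal rescaling of the terms (equivalently, that the relevant square complex on the Bruhat graph is simply connected). You correctly identify this as the main obstacle; it is exactly the content of Akin's sign analysis, and citing him there (as you propose, and as the paper does) is the cleanest resolution.
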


\begin{remark}
    The complexes $\BGG_{{\bf a}}$ are necessarily resolutions of $\bbc$ over the subalgebra $\uU (\g_{\bf a}) \subset \uU(\g)$. The isomorphism in the previous result is largely affected by sign conventions for tensor products of complexes and the fact that the BGG complex is determined up to some choice of orientation function on the Bruhat poset. As proved by Akin \cite[Proof of Theorem 2]{akin1992complexes}, there is a choice of signs for which there is an honest \emph{equality} (modulo canonical isomorphisms)
    \[
      \BGG_{{\bf a}} = \ind_{\fp_{\bf a}}^{\g} (\BGG_{a_1} \otimes \cdots \otimes \BGG_{a_\ell}), \quad \text{for all  $(a_1 , \dots , a_\ell)$}.
    \]
    We will tacitly assume for the remainder of this section that the signs chosen on the BGG resolution align with those of Akin, and hence satisfy the equality above.
\end{remark}

\begin{lemma} \label{lem:JT-tensor}
  Assume $A$ is a $G$-PF sequence. Let ${\bf a}=(a_1,\dots,a_\ell)$ be a composition of $n$.
Define $b_i = \sum_{j=1}^i a_j$ and for each $i=1,\dots,\ell$, define the length $a_i$ sequence $\mu(i) = (\lambda_{b_{i-1}+1}, \lambda_{b_{i-1}+2}, \dots,\lambda_{b_i})$. Then Zelevinsky's functor induces an isomorphism of complexes
\[
  \Phi_{A^{\otimes n}, \lambda}^{\gl_n} (\BGG_{{\bf a}} )      \cong
  \Phi^{\gl_{a_1}}_{A^{\otimes a_1}, \mu(1)} (\BGG_{a_1}) \otimes \cdots \otimes \Phi^{\gl_{a_\ell}}_{A^{\otimes a_\ell}, \mu(\ell)} (\BGG_{a_\ell})
    \]
    where $A^{\otimes n}$ is viewed as a representation of $\g_{\bf a}=\gl_{a_1} \times \cdots \times \gl_{a_\ell}$.
\end{lemma}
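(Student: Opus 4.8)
The plan is to reduce the statement to a general \emph{transitivity property} of Zelevinsky's functor with respect to parabolic induction, and then feed in Akin's decomposition (Lemma~\ref{lem:akin}). Write $\fn_-$ for the strictly lower-triangular subalgebra of $\gl_n$, let $\mathfrak{u}_-$ denote the nilradical of the parabolic opposite to $\fp_{\bf a}$ (the ``strictly below the blocks'' entries), and let $\fn_-^{\g_{\bf a}} = \bigoplus_i \fn_-^{\gl_{a_i}}$ be the in-block part. Then $\mathfrak{u}_-$ is an ideal of $\fn_-$ and is $\g_{\bf a}$-stable, with $\fn_-/\mathfrak{u}_- \cong \fn_-^{\g_{\bf a}}$. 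Since $\Phi^{\gl_n}_{V,\lambda}(M) = (\HH_0(\fn_-, V \otimes M))_\lambda$, taking coinvariants in stages gives
\[
  \Phi^{\gl_n}_{V,\lambda}(M) = \big( \HH_0(\fn_-^{\g_{\bf a}},\, \HH_0(\mathfrak{u}_-,\, V \otimes M)) \big)_\lambda .
\]

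I would first establish: for a $\gl_n$-module $V$ and a $\fp_{\bf a}$-module $N$ (pulled back from $\g_{\bf a}$), there is an isomorphism $\Phi^{\gl_n}_{V,\lambda}(\ind_{\fp_{\bf a}}^{\gl_n} N) \cong \Phi^{\g_{\bf a}}_{V|_{\g_{\bf a}},\, \lambda}(N)$, natural in $N$. The two inputs are the tensor (projection) identity $V \otimes \ind_{\fp_{\bf a}}^{\gl_n} N \cong \ind_{\fp_{\bf a}}^{\gl_n}(V|_{\fp_{\bf a}} \otimes N)$, and the computation $\HH_0(\mathfrak{u}_-, \ind_{\fp_{\bf a}}^{\gl_n} N') \cong N'|_{\g_{\bf a}}$ for any $\fp_{\bf a}$-module $N'$. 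The second follows from the PBW identification $\ind_{\fp_{\bf a}}^{\gl_n} N' \cong \uU(\mathfrak{u}_-) \otimes N'$, under which $\mathfrak{u}_-$ acts only on the first factor by left multiplication (so its degree-$0$ homology is $\bbc \otimes N' = N'$) while $\g_{\bf a}$ acts on the surviving $N'$ by its original restricted action. Feeding both into the staged formula and taking the $\lambda$-weight space gives the claimed transitivity; naturality in $N$, hence applicability to a \emph{complex} $N$ with differentials respected, is automatic since every step is functorial.

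Now specialize with $N = \BGG_{a_1} \otimes \cdots \otimes \BGG_{a_\ell}$, a complex of $\g_{\bf a}$-modules: Lemma~\ref{lem:akin} (with Akin's sign choice) gives the honest equality $\BGG_{\bf a} = \ind_{\fp_{\bf a}}^{\gl_n} N$, so applying $\Phi^{\gl_n}_{A^{\otimes n}, \lambda}$ and then transitivity yields $\Phi^{\gl_n}_{A^{\otimes n}, \lambda}(\BGG_{\bf a}) \cong \Phi^{\g_{\bf a}}_{A^{\otimes n}|_{\g_{\bf a}}, \lambda}(N)$. By iterating property~(2) of the Jacobi--Trudi structure, $A^{\otimes n}|_{\g_{\bf a}} \cong A^{\otimes a_1} \otimes \cdots \otimes A^{\otimes a_\ell}$, with each $\gl_{a_i}$ acting through the Jacobi--Trudi structure on the $i$th factor and trivially on the others. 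Because $\fn_-^{\g_{\bf a}}$ splits as a direct sum over the blocks, $\HH_0(\fn_-^{\g_{\bf a}}, -)$ of a tensor product of such $\g_{\bf a}$-modules factors as the tensor product of the block-wise $\HH_0(\fn_-^{\gl_{a_i}}, -)$; likewise the weight $\lambda \in \bbz^n$ splits as $(\mu(1), \dots, \mu(\ell))$ exactly as in the statement, so the $\lambda$-weight space factors as the tensor product of the $\mu(i)$-weight spaces. Assembling these and using property~(1) to recognize the factors, $\Phi^{\g_{\bf a}}_{A^{\otimes n}|_{\g_{\bf a}}, \lambda}(N) \cong \bigotimes_{i=1}^\ell \big(\HH_0(\fn_-^{\gl_{a_i}}, A^{\otimes a_i} \otimes \BGG_{a_i})\big)_{\mu(i)} = \bigotimes_{i=1}^\ell \Phi^{\gl_{a_i}}_{A^{\otimes a_i}, \mu(i)}(\BGG_{a_i})$, an isomorphism of complexes since each identification is natural in $N$.

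I expect the main difficulty to be bookkeeping rather than anything conceptual: verifying that all the identifications — the tensor identity, the PBW splitting of $\ind_{\fp_{\bf a}}^{\gl_n}$, the surviving $\g_{\bf a}$-action after passing to $\mathfrak{u}_-$-coinvariants, and the weight grading — are mutually compatible and natural in the complex variable, so that the final isomorphism respects differentials and is genuinely an isomorphism of complexes. A secondary point requiring care is pinning down exactly which form of Jacobi--Trudi axiom~(2) is needed to obtain $A^{\otimes n}|_{\g_{\bf a}} \cong A^{\otimes a_1} \otimes \cdots \otimes A^{\otimes a_\ell}$ for an arbitrary composition ${\bf a}$, i.e.\ checking that the axiom, iterated, is compatible with grouping the $n$ coordinates into consecutive blocks.
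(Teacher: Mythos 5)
Your proposal is correct and follows essentially the same route as the paper: the paper likewise computes $\Phi^{\gl_n}_{A^{\otimes n},\lambda}$ on each Verma module $M^{\gl_n}(w)=\ind_{\fp_{\bf a}}^{\gl_n}(M^{\gl_{a_1}}(w_1)\otimes\cdots\otimes M^{\gl_{a_\ell}}(w_\ell))$ via the identity $\ind_{\fp_{\bf a}}^{\gl_n}(V)/\fn_-\ind_{\fp_{\bf a}}^{\gl_n}(V)=V/\fn'_-V$ (your PBW/tensor-identity argument, done term-by-term) and then invokes Lemma~\ref{lem:akin} for compatibility with the differentials, exactly as you do by applying the transitivity statement to the whole induced complex. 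Your explicit flagging of the restriction $A^{\otimes n}|_{\g_{\bf a}}\cong A^{\otimes a_1}\otimes\cdots\otimes A^{\otimes a_\ell}$ is, if anything, more careful than the paper, which leaves that identification implicit.
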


\begin{proof}
  Pick $w = (w_1,\dots,w_\ell) \in S_{a_1} \times \cdots \times S_{a_\ell}$.
  Let $\fn_-$ be the subalgebra of strictly lower-triangular matrices in $\gl_n$ and let $\fn'_-$ be the tuples of lower-triangular matrices in $\g_{\bf a}$. First note that by basic properties of the induction functor, for any $\g_{\bf a}$-representation $V$, we have
  \[
    \ind_{\fp_{\bf a}}^{\gl_n}(V)/ \fn_-\ind_{\fp_{\bf a}}^{\gl_n}(V) = V / \fn'_- V.
  \]
In particular, 
  \begin{align*}
    \Phi^{\gl_n}_{A^{\otimes n}, \lambda}( M^{\gl_n}(w))
    &= \Phi^{\gl_n}_{A^{\otimes n}, \lambda}(\ind_{\fp_{\bf a}}^{\gl_n} (M^{\gl_{a_1}}(w_1) \otimes \cdots \otimes M^{\gl_{a_\ell}}(w_\ell))\\
    &= \Phi^{\gl_{a_1}}_{A^{\otimes a_1}, \mu(1)}( M^{\gl_{a_1}}(w_1)) \otimes \cdots \otimes \Phi^{\gl_{a_\ell}}_{A^{\otimes a_\ell}, \mu(\ell)} (M^{\gl_{a_\ell}}(w_\ell))
  \end{align*}
  Thus at the level of modules, Zelevinsky's functor induces an isomorphism of the desired form, so it remains to show that this is compatible with the differentials, but this is an immediate consequence of Lemma~\ref{lem:akin}.
\end{proof}

Let $A$ be $G$-totally positive. The graded dual of a $G$-PF sequence is also a $G$-PF sequence, so we can consider the induced Jacobi--Trudi structure on $(A^*)^{\otimes n}$ for $n \geq 1$. Consider the complex:
    \[
      \Phi_{(A^*)^{\otimes 2}, (1,1)} (\BGG_2) = \left( 0 \to A^*_2 \to A_1^* \otimes A_1^* \right).
    \]
    This complex induces a map $\Delta_{1,1} \colon A_2^* \to A_1^* \otimes A_1^*$, and for general $a,b>0$ we construct maps
    \[
      \Delta_{a,b} \colon A_{a +b}^* \to A_a^* \otimes A_b^*, \qquad      \Delta_{a , b} \coloneq   (-1)^{a+b} \pi_{a,b} \circ d^{JT^{(1^{a+b})}}_{a+b-1}
    \]
    where $JT^{(1^{a+b})} \coloneq \Phi_{(A^*)^{\otimes a+b}, (1^{a+b})} (\BGG_{a+b})$, the notation $d^{JT^{(1^{a+b})}}$ denotes the differential of $JT^{(1^{a+b})}$, and $\pi_{a,b} \colon JT^{(1^{a+b})}_{a+b-2} \to A_a^* \otimes A_b^*$ denotes the projection map.  When $a= 0$ or $b= 0$, the corresponding map $\Delta_{a,0}$ or $\Delta_{b,0}$ is simply the identity map on the corresponding graded piece. We define a coproduct
    \[
      \Delta \colon A^* \to A^* \otimes A^*
    \]
    by taking the sum of all the maps $\Delta_{a,b}$. This allows us to define a product
    \[
      m \colon A \otimes A \to A
    \]
    by taking the graded dual of $\Delta$.

    \begin{lemma}
      The complex $JT^{(1^n)}$ is isomorphic to the degree $n$ homogeneous strand of the cobar complex on $A^*$ with coproduct $\Delta$. In particular, $\Delta$ induces a coassociative coalgebra structure on $A^*$ and $m$ gives $A$ the structure of a Koszul algebra.
    \end{lemma}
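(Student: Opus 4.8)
The plan is to establish the claimed isomorphism of complexes first, and then read off the coassociativity of $\Delta$ and the Koszulness of $A$ as formal consequences. Fix $n$, and to a composition $\cc=(c_1,\dots,c_k)$ of $n$ associate the permutation $\sigma_\cc\in S_{c_1}\times\cdots\times S_{c_k}\subset S_n$ acting on the $j$-th block (of size $c_j$) by the long cycle. First I would compute the terms of $JT^{(1^n)}=\Phi^{\gl_n}_{(A^*)^{\otimes n},(1^n)}(\BGG_n)$ via Zelevinsky's theorem: the $\sigma$-summand is $((A^*)^{\otimes n})_{(1^n)-\sigma\bullet 0}$, whose $k$-th weight coordinate works out to $1+\sigma^{-1}(k)-k$; this is a valid (nonnegative) weight only when $\sigma^{-1}(k)\ge k-1$ for all $k$, and for such $\sigma$ a short check gives $\ell(\sigma)=\#\{k:\sigma^{-1}(k)=k-1\}$. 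Hence the nonzero summands of $JT^{(1^n)}_{n-k}$ are exactly the $A_{c_1}^*\otimes\cdots\otimes A_{c_k}^*$ over compositions $\cc$ with $k$ parts (via $\sigma\mapsto\sigma_\cc$), which are also precisely the terms in cohomological degree $k$ of the internal-degree-$n$ strand of the cobar complex of $(A^*,\Delta)$. So the underlying graded vector spaces agree under $i\leftrightarrow k=n-i$.

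\emph{The main work is matching the differentials.} For each $\cc$, Lemma~\ref{lem:akin} produces a subcomplex $\BGG_\cc\subseteq\BGG_n$ that is a termwise direct summand, so applying the additive functor $\Phi:=\Phi^{\gl_n}_{(A^*)^{\otimes n},(1^n)}$ to the inclusion gives a subcomplex $\Phi(\BGG_\cc)\subseteq JT^{(1^n)}$, again a termwise direct summand, whose terms are (by the bijection above) exactly the summands indexed by compositions refining $\cc$; in particular its top term, in homological degree $n-k$, is the $\cc$-summand. Now Lemma~\ref{lem:JT-tensor}, applied with this composition and $\lambda=(1^n)$ (so that each $\mu(i)=(1^{c_i})$), gives an isomorphism of complexes $\Phi(\BGG_\cc)\cong JT^{(1^{c_1})}\otimes\cdots\otimes JT^{(1^{c_k})}$. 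Reading off the tensor-product differential on the top term, and using that $d^{JT^{(1^{c_j})}}_{c_j-1}=(-1)^{c_j}\sum_{a+b=c_j,\,a,b\ge1}\Delta_{a,b}$ by the very definition of the maps $\Delta_{a,b}$, I conclude that the differential of $JT^{(1^n)}$ restricted to the $\cc$-summand is, up to sign, $\sum_j\sum_{a+b=c_j}\id^{\otimes j-1}\otimes\Delta_{a,b}\otimes\id^{\otimes k-j}$, which is exactly the cobar differential out of the $\cc$-term. Since this holds for every $\cc$, the two complexes coincide. The genuinely delicate point is the signs; I would handle it by invoking the global sign normalization of the BGG resolutions fixed just after Lemma~\ref{lem:akin} (which makes the tensor-factorizations honest equalities) together with the $(-1)^{a+b}$ twist built into $\Delta_{a,b}$, and, if necessary, rescaling the chosen isomorphism on each summand by a sign to reconcile it with the Koszul sign rule in the cobar complex.

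Finally the consequences. Since $JT^{(1^n)}$ is a genuine complex for every $n$, so is the degree-$n$ cobar strand of $(A^*,\Delta)$; the square-zero condition on the component out of the cohomological-degree-one term $A_n^*$ unwinds to the coassociativity relation for the reduced coproduct in internal degree $n$, while the counit axioms hold automatically because $\Delta_{a,0}$ and $\Delta_{0,b}$ are identities. Thus $\Delta$ is a coassociative counital coproduct, and dually $m$ makes $A=\bigoplus_{d\ge0}A_d$ a connected graded associative algebra. Because each $A_d$ is finite-dimensional, the cobar complex of $A^*$ is the $\kk$-linear dual of the reduced bar resolution of $\kk$ over $A$, so $\Ext^i_A(\kk,\kk)_d$ is the $i$-th cohomology of the degree-$d$ cobar strand, which by the isomorphism above is $\mathrm{H}_{d-i}(JT^{(1^d)})$. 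By Zelevinsky's theorem $JT^{(1^d)}$ is acyclic with $\mathrm{H}_0\cong\bbs^{A^*}_{(1^d)}$; hence $\Ext^i_A(\kk,\kk)_d=0$ unless $i=d$, equivalently $\Tor^A_i(\kk,\kk)$ is concentrated in degree $i$, so $A$ is Koszul (and along the way one identifies $A^!\cong\bigoplus_d\bbs^{A^*}_{(1^d)}$).
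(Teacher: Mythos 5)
Your proposal is correct and follows essentially the same route as the paper's proof: the differentials are matched by restricting to the subcomplex $\BGG_{\bf c}\subset\BGG_n$ from Lemma~\ref{lem:akin}, applying Lemma~\ref{lem:JT-tensor} to factor it as $JT^{(1^{c_1})}\otimes\cdots\otimes JT^{(1^{c_k})}$, and reading off the cobar differential from the backmost term, with coassociativity coming from $d^2=0$ and Koszulness from dualizing to the bar complex. The only difference is cosmetic — you make the combinatorial identification of the nonzero weight summands explicit rather than packaging the argument as an induction on $n$.
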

    
    \begin{proof}
        Proceed by induction on $n$, where the base case $n=2$ is vacuous. For $n > 2$, by Lemma~\ref{lem:JT-tensor}, for any component $A^*_{a_1} \otimes \cdots \otimes A^*_{a_\ell} \subset JT^{(1^n)}_{a_1 + \cdots + a_\ell - \ell}$, the differential has the form\footnote{Notice that in order for the proof to work more smoothly, we use this modified sign convention for the cobar differentials.}
        \[
          d^{JT^{(1^n)}}|_{A^*_{a_1} \otimes \cdots \otimes A_{a^*_\ell}} = \sum_{i=1}^\ell (-1)^{\sum_{j=1}^{i-1} (a_j - 1)} \sum_{\substack{r+s = a_i, \\ r,s > 0}} 1 \otimes \cdots \otimes \underbrace{(-1)^{r+s} \Delta_{r,s}}_{i\text{th spot}} \otimes \cdots \otimes 1.
        \]
        In more detail: if $\ell = 1$ then there is nothing to prove, since this is how the maps $\Delta_{a,b}$ were defined in the first place. For $\ell > 1$, restrict to the subcomplex induced by the inclusion $S_{a_1} \times \cdots \times S_{a_\ell} \subset S_n$. Then the $G$-representation $A_{a_1}^* \otimes \cdots \otimes A_{a_\ell}^*$ is the backmost nonzero representation of the tensor product $JT^{(1^{a_1})} \otimes \cdots \otimes JT^{(1^{a_\ell})}$, and the differential is precisely the differential above, since the term $A^*_{a_i}$ sits in homological degree $a_i-1$ in the complex $JT^{(1^{a_i})}$.

        Thus the complex $JT^{(1^n)}$ is indeed isomorphic to the degree $n$ strand of the cobar complex on $A^*$, and moreover, it follows immediately that this coproduct structure is coassociative, since in order for the cobar differentials to compose to $0$, this is equivalent to coassociativity.

        We deduce that the graded dual $A$ of the coalgebra $A^*$ is an associative algebra. Moreover, the graded duals of the complexes $JT^{(1^n)}$ for each $n$ are isomorphic to the degree $n$ strand of the normalized bar complex for the algebra $A$. The fact that these strands are resolutions is equivalent to the fact that $\tor_i^A (\kk, \kk)_j = 0$ for $i \neq j$. Thus, by definition $A$ is a Koszul algebra.        
    \end{proof}

As a consequence, we deduce a strong realizability result relating $G$-PF sequences and Hilbert functions of Koszul algebras:

    \begin{theorem}\label{thm:GPFalgebraStructure}
    If $A$ is $G$-totally positive, then $([A_i])$ is the equivariant Hilbert function of a Koszul algebra. In particular, every P\'olya frequency sequence is the Hilbert function of a Koszul algebra (in fact, we can take $\kk=\bbq$).
  \end{theorem}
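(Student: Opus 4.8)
The plan is to obtain Theorem~\ref{thm:GPFalgebraStructure} as a formal consequence of the two results immediately preceding it: Proposition~\ref{prop:JTandSchurEquivalence}, which converts $G$-total positivity into a Jacobi--Trudi structure, and the Lemma just above the theorem, which manufactures a Koszul algebra out of a Jacobi--Trudi structure. Thus the argument is a two-step assembly together with a check of equivariance and of fields of definition; essentially no new computation is needed.

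In detail: assume first that $G$ acts semisimply on every tensor power $A^{\otimes n}$ (automatic if $G$ is finite, reductive, or trivial). Since $A$ is $G$-totally positive, Proposition~\ref{prop:JTandSchurEquivalence} endows $A$ with a Jacobi--Trudi structure. Feeding this into the preceding Lemma, the Zelevinsky complexes $\Phi_{(A^*)^{\otimes n},(1^n)}(\BGG_n)$ produce a coassociative coproduct $\Delta$ on $A^* = \bigoplus_i A_i^*$ whose graded dual $m\colon A\otimes A\to A$ makes $A$ a Koszul algebra. Every ingredient here is $G$-equivariant --- the decomposition $A^{\otimes n}\cong\bigoplus_\lambda \bbs_\lambda^A\otimes\bbs_\lambda(\bbc^n)$ of Proposition~\ref{lem:JTcauchyIdentity}, Zelevinsky's functor of \S\ref{ss:zelevinsky}, and the BGG differentials --- so $m$ is a $G$-equivariant product; as it has graded pieces $A_i$, the equivariant Hilbert function of $(A,m)$ is $([A_i])$. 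For the non-equivariant statement, given a PF sequence $(a_d)_{d\ge 0}$ with $a_0=1$, take $G$ trivial and let $A_d$ be a vector space of dimension $a_d$; then $A$ is totally positive, and the construction yields a Koszul algebra with Hilbert function $(a_d)$.

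To get the refinement $\kk=\bbq$, I would run the same construction over $\bbq$. In the trivial-group case the Jacobi--Trudi structure is produced by choosing a $\GL_n(\bbq)$-equivariant isomorphism $A^{\otimes n}\cong\bigoplus_{\ell(\lambda)\le n}\bbq^{s^A_\lambda}\otimes\bbs_\lambda(\bbq^n)$, which is possible because the two sides have the same graded dimension in each degree by the Cauchy identity; this transports the standard rational $\GL_n(\bbq)$-action to $A^{\otimes n}$. The BGG resolution of $\gl_n$ and Zelevinsky's functor (weight spaces and quotients by the rationally-defined nilradical) are all defined over $\bbq$, hence so are the maps $\Delta_{a,b}$ and the dual product $m$; and the complexes $JT^{(1^n)}$ remain acyclic over $\bbq$, so the bar strands of $(A,m)$ are exact and $(A,m)$ is a Koszul $\bbq$-algebra with the prescribed Hilbert function.

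The only delicate point is the semisimplicity hypothesis on the $G$-action, which is exactly what is needed to invoke Proposition~\ref{prop:JTandSchurEquivalence} (and Proposition~\ref{lem:JTcauchyIdentity}) in the direction producing the Jacobi--Trudi structure; for a non-semisimple $G$ one should either pass to semisimplifications or simply note that the equivariant Hilbert function records only the classes $[A_i]\in{\rm K}(G)$, which are insensitive to this. Beyond that, all the substance --- coassociativity of $\Delta$ and exactness of the bar strands --- is already carried by the preceding Lemma, so for the theorem itself the remaining work is bookkeeping of equivariance and of fields of definition.
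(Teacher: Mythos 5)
Your proposal is correct and follows essentially the same route as the paper: the paper's proof likewise consists of invoking the preceding Lemma (the cobar-complex construction of the product from the Jacobi--Trudi structure) for the first statement, and for the second statement setting $A_i=\bbq^{d_i}$, applying Proposition~\ref{prop:JTandSchurEquivalence} with $G$ trivial, and then citing the first part. Your additional remarks on equivariance and $\bbq$-rationality are correct elaborations of points the paper leaves implicit.
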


  \begin{proof}
    The first part was proved in the previous lemma. For the second statement, if $(d_i)$ is a P\'olya frequency sequence, then set $A_i = \bbq^{d_i}$. From Proposition~\ref{prop:JTandSchurEquivalence} (taking $G$ trivial), $A$ admits a Jacobi--Trudi structure. Then $(d_i)$ is the Hilbert function of $A$, and from the first part, there is a product on $A$ so that it is a Koszul algebra.
  \end{proof}
  
\begin{remark}
    Although the proof of Theorem \ref{thm:GPFalgebraStructure} requires computing Zelevinsky's functor on the complexes $\BGG_n$ for all $n \geq 2$, the method of computing the Koszul algebra associated to a given $G$-PF sequence is effective once this result is established. This is because one only needs to compute the induced map $A_1 \otimes A_1 \to A_2$, since the kernel of this map will be the defining relations for the induced quadratic algebra structure on $A$. 
\end{remark}

\begin{remark}
  By \cite{piontkovskii2001hilbert} (see also \cite[Ch. 3, Section 5]{polishchuk2005quadratic}), there exist quadratic algebras $A$ and $A'$ with the property that both $A$ and $A'$ \emph{and} $A^!$ and ${A'}^!$ have the same Hilbert series, but such that $A$ is Koszul and $A'$ is not.
  In particular, an associative algebra $A$ whose Hilbert function is a PF sequence need not be Koszul.
\end{remark}

We remark that the equivariant Hilbert functions of Koszul algebras behave much like $G$-PF sequences. For instance:

\begin{prop}[{\cite[Chapter 3]{polishchuk2005quadratic}}, {\cite{backelin2006rates}}, {\cite{froberg1985koszul}}]
  Let $A$ and $B$ be Koszul algebras.
\begin{enumerate}
\item For each $d \ge 1$, the Veronese subalgebras $A^{(d)} = \bigoplus_{n \ge 0} A_{nd}$ are Koszul algebras.
\item The tensor product $A \otimes B$ is a Koszul algebra.
\item The Segre product $\bigoplus_{n \ge 0} A_n \otimes B_n$ is a Koszul algebra.
\end{enumerate}  
\end{prop}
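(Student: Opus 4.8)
These are classical facts---see \cite[Chapter 3]{polishchuk2005quadratic}, \cite{froberg1985koszul}, and \cite{backelin2006rates}---and the plan in each case is to check the definition directly: to produce a linear minimal free resolution of $\kk$ over the new algebra, equivalently to show that its $i$-th Tor against $\kk$ is concentrated in internal degree $i$. I would treat the tensor product first, since it is elementary, and then handle the Veronese and Segre cases together, since they run on the same less-trivial engine; in fact I would deduce (3) from (2) together with the device used for (1).

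For (2): take the minimal graded free resolutions $F_\bullet\to\kk$ over $A$ and $G_\bullet\to\kk$ over $B$. By Koszulness, $F_i$ is a free $A$-module with generators in internal degree $i$, and $G_j$ a free $B$-module with generators in internal degree $j$. Since $\kk$ is a field, the K\"unneth theorem shows that $F_\bullet\otimes_\kk G_\bullet$ is a graded free resolution of $\kk$ over $A\otimes_\kk B$, whose $n$-th term $\bigoplus_{i+j=n}F_i\otimes_\kk G_j$ is a free $A\otimes_\kk B$-module with generators in internal degree $n$. For internal-degree reasons the differentials then have all their matrix entries in the homogeneous maximal ideal, so this resolution is minimal, and reading off $\Tor^{A\otimes_\kk B}_n(\kk,\kk)=\bigoplus_{i+j=n}(F_i\otimes_\kk G_j)\otimes\kk$ shows it is concentrated in internal degree $n$. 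Hence $A\otimes_\kk B$ is Koszul; no real obstacle arises here.

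For (1) and (3), the plan is to show that $\kk$ has a linear free resolution over the subalgebra by comparing it with the (linear) resolution over the larger Koszul algebra. In (1), the larger algebra is $A$ itself, and one must understand how $A$ decomposes over $A^{(d)}$: writing $A=\bigoplus_{r=0}^{d-1}M_r$ with $M_r=\bigoplus_{n\ge 0}A_{nd+r}$ the $r$-th Veronese module (graded by $n$), Backelin's method builds a linear $A^{(d)}$-resolution of $\kk$ out of the bar complex of $A$ by tracking internal degrees modulo $d$. In (3), observe that $C:=A\circ B$ is the diagonal subalgebra $\bigoplus_n R_{n,n}$ of the bigraded algebra $R:=A\otimes_\kk B$ (with $R_{i,j}=A_i\otimes_\kk B_j$), which is Koszul by part (2); one then runs the analogous argument relative to $C\subset R$, using the decomposition of $R$ over $C$ into its off-diagonal strands $R^{(k)}=\bigoplus_{i-j=k}R_{i,j}$, to produce a linear $C$-resolution of $\kk$. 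The main obstacle---indeed the only nontrivial ingredient beyond the K\"unneth argument of (2)---is exactly this passage from the bar complex of the big algebra to a linear resolution over the subalgebra, i.e.\ Backelin's estimate that passing to a Veronese or a Segre factor does not increase the ``rate'' of a Koszul algebra; I would carry it out following \cite{backelin2006rates} and \cite[Ch.\ 3]{polishchuk2005quadratic}, where the delicate bookkeeping lies in keeping the internal degrees of all the intermediate free modules linear at every stage.
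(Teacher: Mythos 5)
The paper states this proposition without proof, citing exactly the references you invoke, so there is no in-paper argument to compare against; your outline correctly reproduces the standard proofs from those sources. The K\"unneth argument for (2) is complete as written, and you rightly identify that (1) and (3) reduce to Backelin's rate estimates (with the Segre product handled as the diagonal subalgebra of the bigraded tensor product), which is precisely the approach of \cite{backelin2006rates} and \cite[Ch.~3]{polishchuk2005quadratic}.
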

Just as a point of clarification: taking Segre products does \emph{not} necessarily preserve total positivity (see Example \ref{ex:segreNonpositive}) even though it does preserve Koszulness. The following example shows that $([A_i])_{i \geq 0}$ being a $G$-PF sequence is a strictly stronger property than being the equivariant Hilbert function of a Koszul algebra.

\begin{example}
  For this example, we take $G$ to be the trivial group. Consider the sequences
  \[
    a_i = \begin{cases} 1 & \text{if $i=0$}\\ 2 & \text{else} \end{cases},\qquad b_i = (i+1)^2
  \]
  with corresponding generating functions
  \[
    A(t) = \sum_{d \ge 0} a_d t^d = \frac{1+t}{1-t}, \qquad B(t) = \sum_{d \ge 0} b_d t^d = \frac{1+t}{(1-t)^3}.
  \]
  Their Hadamard product has generating function
  \[
    \sum_{d \ge 0} a_d b_d t^d = \frac{4t - 3t^2 + t^3}{(1-t)^3}.
  \]
  Using \cite[Corollary 1.3]{wagner}, $a$ and $b$ are PF-sequences, while their Hadamard product is not.

  On the other hand, $a$ is the Hilbert function of a quadric hypersurface ring in 2 variables, e.g., $\bbc[x_1,x_2]/(x_1^2)$, $b$ is the Hilbert function of a quadric hypersurface ring in 4 variables, e.g., $\bbc[x_1,\dots,x_4]/(x_1^2)$, both of which are Koszul algebras. The Hadamard product is the Hilbert function of the Segre product, which preserves the Koszul property.
\end{example}

\begin{prop}\label{prop:A!schurFunctors}
    If $A$ is a Koszul algebra which is $G$-totally positive, then the quadratic dual algebra $A^!$ is also  $G$-totally positive with
    $$ [\bbs_\lambda^{A^!}] = \det ( [A^!_{\lambda_i^T-\mu_j^T - i + j}])_{i,j=1}^n.$$
\end{prop}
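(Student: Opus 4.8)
The plan is to transfer the symmetric-function identities of Section~\ref{sec:combinatorics} to $A^!$ through the ring homomorphism $\psi\colon\Lambda\to{\rm K}(G)$ with $\psi(h_d)=[A_d]$, the single new ingredient being a computation of what $\psi$ does to the elementary symmetric functions $e_d$. Everything else is then formal: $G$-total positivity of $A^!$ and the determinantal formula both drop out of identities already recorded in the text (Proposition~\ref{cor:fundamentalIdentities}(2), the N\"agelsbach--Kostka identity, Propositions~\ref{prop:JTandSchurEquivalence} and~\ref{prop:JT-det}).

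The one nontrivial step is to show $\psi(e_d)=[A^!_d]$ as classes in ${\rm K}(G)$, after the harmless identification of a representation with its contragredient for the groups that concern us. This is the equivariant incarnation of the classical fact that a Koszul algebra and its quadratic dual have inverse Hilbert series. Concretely: since $A$ is Koszul, the minimal free resolution of $\kk$ over $A$ is linear, so its $i$th term is $A\otimes_\kk \Tor_i^A(\kk,\kk)$ with the finite-dimensional $G$-representation $\Tor_i^A(\kk,\kk)$ placed in internal degree $i$; taking the alternating sum of classes of the terms in ${\rm K}(G)[[t]]$ gives $\bigl(\sum_{d\geq 0}[A_d]t^d\bigr)\bigl(\sum_{i\geq 0}(-1)^i[\Tor_i^A(\kk,\kk)]t^i\bigr)=1$. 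Applying $\psi$ to the identity $\bigl(\sum_d h_d t^d\bigr)\bigl(\sum_i(-1)^i e_i t^i\bigr)=1$ in $\Lambda$ and comparing coefficients (the series $\sum_d[A_d]t^d$ is invertible because $A_0=\kk$) yields $\psi(e_i)=[\Tor_i^A(\kk,\kk)]=[\ext_A^i(\kk,\kk)]^{*}=[A^!_i]^{*}$, and the dual may be dropped when every $G$-representation is self-dual (e.g. $G$ finite or orthogonal, which covers our applications).

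Granting this, $G$-total positivity of $A^!$ is immediate: $s^{A^!}_\lambda$ is by definition $\det([A^!_{\lambda_i-i+j}])_{i,j}$, which upon substituting $[A^!_i]=\psi(e_i)$ becomes $\psi\bigl(\det(e_{\lambda_i-i+j})_{i,j}\bigr)$, and by the dual Jacobi--Trudi (N\"agelsbach--Kostka) identity in $\Lambda$ this determinant equals $s_{\lambda^T}$; hence $s^{A^!}_\lambda=\psi(s_{\lambda^T})=s^A_{\lambda^T}$ (or its dual in the non-self-dual case), which is a genuine class because $A$ is $G$-totally positive. Running the same computation for skew shapes gives the cleaner identity $s^{A^!}_{\lambda/\mu}=s^A_{\lambda^T/\mu^T}$, and when $G$ acts semisimply on the tensor powers of $A^!$ (so that Propositions~\ref{prop:JTandSchurEquivalence} and~\ref{prop:JT-det} apply) this class is represented by $\bbs^{A^!}_{\lambda/\mu}$.

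The determinantal formula is then just transpose duality rewritten with $A^!$-data: Proposition~\ref{cor:fundamentalIdentities}(2) reads $s^A_{\lambda/\mu}=\det(\psi(e_{\lambda^T_i-\mu^T_j-i+j}))_{i,j=1}^n$ for $n\geq\lambda_1$, and substituting $\psi(e_d)=[A^!_d]$ from the first step turns the right-hand side into an $n\times n$ determinant in the graded pieces of $A^!$ indexed by the transposed partitions $\lambda^T,\mu^T$, which is the asserted identity (equivalently, applying the same reasoning to $A^!$ via $\psi_{A^!}=\psi\circ\omega$, $\omega$ the classical involution, so that $\psi_{A^!}(e_d)=[A_d]$, expresses $[\bbs^{A^!}_{\lambda/\mu}]$ as a determinant in the graded pieces of $A$). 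I do not expect a real obstacle: the entire proposition hinges on the single identity $\psi(e_d)=[A^!_d]$, the algebraic shadow of Koszul duality, and the only remaining care is bookkeeping---keeping the transpositions straight and tracking the $\ext$/$\Tor$ duality, the latter being invisible for the groups occurring in our applications.
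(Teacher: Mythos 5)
Your proposal is correct and follows essentially the same route as the paper: both arguments reduce everything to the single identity $\psi(e_d)=[(A^!_d)^*]$ and then invoke the dual Jacobi--Trudi identity. The paper derives that identity by expanding $e_d$ as an alternating sum of $h_\alpha$ over compositions and recognizing the result as the Euler characteristic of the degree-$d$ strand of the bar complex, whereas you invert the generating function $\sum_d [A_d]t^d$ using the linearity of the minimal free resolution of $\kk$ --- the same computation in different packaging --- and you are, if anything, more explicit than the paper about the $\Tor$/$\ext$ dual and when it can be dropped.
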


\begin{proof}
    First observe that there is an equality of symmetric functions
    $$e_d = \sum_{\alpha \leq (d)} (-1)^{d-\ell (\alpha)} h_{\alpha_1} \cdots h_{\alpha_{\ell(\alpha)}},$$
    and thus $\psi (e_i) = \sum_{\alpha \leq (d)} (-1)^{d-\ell (\alpha)} [A_{\alpha_1}] \cdots [A_{\alpha_{\ell(\alpha)}}]$; the partial order $\leq$ is the refinement order on compositions. It is well-known (for instance, by taking the Euler characteristic of homogeneous strands of the Bar complex) that this latter sum is precisely the K-class of $[(A^!_d)^*]$.
\end{proof}
As an immediate consequence, we obtain:

\begin{theorem}\label{thm:koszulnessAndJTStructure}
    Let $A$ be a Koszul algebra admitting a Jacobi--Trudi structure. Then $A^!$ also admits a Jacobi--Trudi structure, obtained by applying the transpose functor to the Jacobi--Trudi structure on $A$, then taking the graded dual.
  \end{theorem}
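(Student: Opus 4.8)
The plan is to obtain the Jacobi--Trudi structure on $A^!$ from Proposition~\ref{prop:JTandSchurEquivalence} and then to identify it explicitly. Since $A$ admits a Jacobi--Trudi structure it is $G$-totally positive (Proposition~\ref{prop:JT-det}), so Proposition~\ref{prop:A!schurFunctors} applies: $A^!$ is $G$-totally positive. Moreover $G$ acts semisimply on each $(A^!)^{\otimes n}$: because $A^!$ is a quotient of the tensor algebra on $A_1^*$, each graded piece $A^!_d$ is a $G$-subquotient of $(A_1^{\otimes d})^* = (A_1^*)^{\otimes d}$, hence of $(A^{\otimes d})^*$, on which $G$ acts semisimply; and tensor products of semisimple $G$-modules are semisimple for the reductive $G$ relevant here. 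Hence Proposition~\ref{prop:JTandSchurEquivalence} endows $A^!$ with a Jacobi--Trudi structure; write $\bbs^{A^!}_\lambda$ for its Schur modules, so that $[\bbs^{A^!}_\lambda] = s^{A^!}_\lambda$ and, by the generalized Cauchy identity (Proposition~\ref{lem:JTcauchyIdentity}), $(A^!)^{\otimes n} \cong \bigoplus_{\ell(\lambda) \le n} \bbs^{A^!}_\lambda \otimes \bbs_\lambda(\bbc^n)$ as $\GL_n \times G$-representations.

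The main step is to establish the $G$-isomorphism $\bbs^{A^!}_\lambda \cong (\bbs^A_{\lambda^T})^*$; by semisimplicity it suffices to verify the corresponding equality of classes in ${\rm K}(G)$. Let $\overline{(-)}$ be the involutive ring endomorphism of ${\rm K}(G)$ induced by $V \mapsto V^*$. The proof of Proposition~\ref{prop:A!schurFunctors} supplies the only non-formal input, namely $\psi(e_d) = [(A^!_d)^*] = \overline{[A^!_d]}$. Combining this with the transpose-duality formula of Proposition~\ref{cor:fundamentalIdentities}(2) (applied to a straight shape with $\lambda$ replaced by $\lambda^T$, using $(\lambda^T)^T = \lambda$) and the fact that $\overline{(-)}$, being a ring homomorphism, commutes with determinants, one computes
\[
  [\bbs^A_{\lambda^T}] = s^A_{\lambda^T} = \det\big(\psi(e_{\lambda_i - i + j})\big)_{i,j=1}^n = \det\big(\overline{[A^!_{\lambda_i - i + j}]}\big)_{i,j=1}^n = \overline{\det\big([A^!_{\lambda_i - i + j}]\big)_{i,j=1}^n} = \overline{s^{A^!}_\lambda}.
\]
Applying $\overline{(-)}$ and using that it is an involution gives $[(\bbs^A_{\lambda^T})^*] = \overline{[\bbs^A_{\lambda^T}]} = s^{A^!}_\lambda = [\bbs^{A^!}_\lambda]$, which is what we wanted.

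Substituting $\bbs^{A^!}_\lambda \cong (\bbs^A_{\lambda^T})^*$ into the decomposition of $(A^!)^{\otimes n}$ above and re-indexing via $\lambda = \mu^T$ yields
\[
  (A^!)^{\otimes n} \cong \bigoplus_{\mu_1 \le n} (\bbs^A_\mu)^* \otimes \bbs_{\mu^T}(\bbc^n),
\]
which is exactly the $\GL_n$-representation obtained from the generalized Cauchy decomposition $A^{\otimes n} \cong \bigoplus_\mu \bbs^A_\mu \otimes \bbs_\mu(\bbc^n)$ underlying the Jacobi--Trudi structure on $A$ by first applying the transpose functor $\Omega$ in the $\bbc^n$-variable (replacing each polynomial functor $\bbs_\mu$ by $\bbs_{\mu^T}$) and then taking the graded dual of the multiplicity spaces $\bbs^A_\mu$. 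This is precisely the asserted structure, so the theorem follows.

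The substantive content is thus entirely carried by Proposition~\ref{prop:A!schurFunctors}; the only slightly delicate point is organizing the transpose/dual bookkeeping in the displayed computation. The one hypothesis worth flagging is semisimplicity of the $G$-action on tensor powers of $A^!$, which is used both to invoke Proposition~\ref{prop:JTandSchurEquivalence} and to upgrade the ${\rm K}(G)$-equality above to an isomorphism of $G$-representations; it holds automatically for the reductive groups occurring in our applications, and could alternatively be adopted as a running hypothesis for this subsection at no cost.
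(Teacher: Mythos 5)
Your proposal is correct and follows essentially the same route as the paper: the paper derives this theorem as an ``immediate consequence'' of Proposition~\ref{prop:A!schurFunctors} (whose only substantive input is the identity $\psi(e_d)=[(A^!_d)^*]$), combined with the equivalence of $G$-total positivity and Jacobi--Trudi structures from Proposition~\ref{prop:JTandSchurEquivalence}. You have simply filled in the transpose/dual bookkeeping that the paper leaves implicit, and your flagging of the semisimplicity hypothesis needed to invoke Proposition~\ref{prop:JTandSchurEquivalence} is a reasonable point the paper glosses over.
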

  
\begin{example}
    This duality can be seen in both Examples \ref{ex:symJTstructure} and \ref{ex:tensorJTstructure}. Applying the transpose functor to $S^\bullet (V \otimes -)$ yields the functor $\bigwedge^\bullet (V \otimes -)$, and dualizing yields the functor $\bigwedge^\bullet (V^* \otimes -)$. For similar reasons as the symmetric algebra case, there is indeed an isomorphism
    $$\bigwedge^\bullet (V^* \otimes \bbc^n) \cong (\bigwedge^\bullet V^*)^{\otimes n}$$
    satisfying $(1) - (3)$ of Definition \ref{def:JTAlgebras}. Likewise, applying the transpose functor, dualizing, and plugging in $\bbc^n$ for the Segre product $T^\bullet (V) \circ S^\bullet (\bbc^n)$ yields the Segre product $T^\bullet (V^*) \circ \bigwedge^\bullet \bbc^n$. Note that this algebra only has nonzero weight spaces for squarefree weights (i.e., with all entries $\leq 1$), and indeed for a squarefree weight $\lambda$ there is an isomorphism
    \[
      (T^\bullet (V^*) \otimes \bigwedge^\bullet (\bbc^n))_\lambda \cong T^{|\lambda|} (V).
      \]
    This makes sense since the quadratic dual of the tensor algebra is the trivial extension $T^\bullet (V^*) / (V^* \otimes V^*) = \bbc \oplus V^*$, which is $0$ in degrees $\geq 2$. 
\end{example}

\begin{prop}
    Assume $A$ is a $G$-totally positive Koszul algebra. If $\lambda / \mu$ is a ribbon diagram, then $s_{\lambda / \mu}^A$ is the class of the ribbon Schur functors as defined in \cite{vandebogert2023ribbon}.
\end{prop}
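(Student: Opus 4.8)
The plan is to prove the statement by identifying, for a ribbon skew shape $\lambda/\mu$, the Jacobi--Trudi complex ${\bf F}^{A,\lambda,\mu}_\bullet$ with the complex out of which \cite{vandebogert2023ribbon} builds its ribbon Schur functor, and then reading off the conclusion from two facts already in hand: ${\bf F}^{A,\lambda,\mu}_\bullet$ is acyclic with ${\rm H}_0({\bf F}^{A,\lambda,\mu}_\bullet) \cong \bbs_{\lambda/\mu}^A$ (Zelevinsky's theorem, as recorded after Proposition~\ref{prop:JT-det}), and its Euler characteristic is the Jacobi--Trudi determinant $\det([A_{\lambda_i-\mu_j-i+j}])$ (Proposition~\ref{prop:JT-det}). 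If one only wants the equality of $K$-classes, this second fact together with the classical ribbon expansion of $s_{\lambda/\mu}$ already does almost everything, so the real content is to promote the identification to the level of functors; for that one needs the complexes.

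First I would record the combinatorial collapse. Write ${\bf a} = (a_1,\dots,a_k)$ for the row composition of the ribbon. Because a ribbon has no $2\times 2$ block, the matrix $([A_{\lambda_i-\mu_j-i+j}])$ is, after the standard reindexing, two-step: almost all of its entries are $0$ or $[A_0]=\kk$, and a Laplace-type expansion shows that the only permutations $\sigma \in S_n$ contributing a nonzero term $(A^{\otimes n})_{\lambda-\sigma\bullet\mu}$ to ${\bf F}^{A,\lambda,\mu}_\bullet$ are those indexed by the coarsenings $\beta$ of ${\bf a}$ (obtained by merging adjacent parts), the term for $\beta$ being $A_{\beta_1}\otimes\cdots\otimes A_{\beta_{\ell(\beta)}}$ and living in homological degree $k-\ell(\beta)$. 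Up to sign this is exactly the term-wise shape of the ribbon complex of \cite{vandebogert2023ribbon}, and it matches the ribbon expansion $s_{\lambda/\mu}=\sum_{\beta}(-1)^{k-\ell(\beta)}h_\beta$, the sum over coarsenings $\beta$ of ${\bf a}$.

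Next I would match the differentials. Restricting ${\bf F}^{A,\lambda,\mu}_\bullet$ along the subgroups $S_{a_1'}\times\cdots\times S_{a_j'}\subset S_n$ coming from refinements of ${\bf a}$ and invoking Lemma~\ref{lem:JT-tensor}, every differential in ${\bf F}^{A,\lambda,\mu}_\bullet$ is a tensor product of ``local'' differentials of the form $\Phi(\BGG_{r+s})$; by the lemma above identifying $JT^{(1^m)}$ with the degree-$m$ strand of the cobar complex on $A^*$ (hence, dually, with the bar complex of $A$), each such local differential is, up to the Akin sign, the comultiplication $\Delta_{r,s}$, equivalently the multiplication $A_r\otimes A_s\to A_{r+s}$. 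So for a ribbon, ${\bf F}^{A,\lambda,\mu}_\bullet$ is assembled from the multiplication of $A$ in precisely the pattern dictated by the ribbon diagram; this is the complex (equivalently, the image construction) used to define the ribbon Schur functor $\bbs_{\lambda/\mu}^A$ in \cite{vandebogert2023ribbon}. Taking ${\rm H}_0$ and using acyclicity of ${\bf F}^{A,\lambda,\mu}_\bullet$ identifies the ribbon Schur functor with $\bbs_{\lambda/\mu}^A$, and in particular $s_{\lambda/\mu}^A$ with its $K$-class.

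The step I expect to be the main obstacle is the bookkeeping with signs and orientations: one must reconcile Akin's normalization of the signs on $\BGG_n$ (already fixed in the paragraph before Lemma~\ref{lem:akin}) with the sign conventions of \cite{vandebogert2023ribbon}, confirm that the ribbon's row composition is read in the direction compatible with the indexing $\lambda-\sigma\bullet\mu$, and handle the degenerate rows of length $1$, where a factor $\Delta_{a,0}$ or $\Delta_{0,b}$ equal to the identity intervenes in the local differentials. These checks are routine but error-prone; the conceptual content is entirely carried by Lemma~\ref{lem:JT-tensor}, the cobar identification, and Proposition~\ref{prop:JT-det}.
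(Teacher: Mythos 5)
The statement only asserts an equality of K-classes, and for that your argument collapses to exactly the paper's one-line proof: by Proposition~\ref{prop:JT-det} the class $s_{\lambda/\mu}^A$ equals the Jacobi--Trudi determinant $\det([A_{\lambda_i-\mu_j-i+j}])$, and the ribbon Schur functors of \cite{vandebogert2023ribbon} satisfy the same determinantal identity for Koszul algebras (this is \cite[Theorem A.12]{vandebogert2023ribbon}, which is where the Koszul hypothesis enters and which you should cite explicitly rather than gesturing at ``the classical ribbon expansion''), so the two classes coincide. Everything beyond that in your proposal --- identifying ${\bf F}^{A,\lambda,\mu}_\bullet$ term-by-term and differential-by-differential with the ribbon complex --- is aimed at a stronger, functorial statement that the proposition does not claim. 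That upgrade is plausible but not established by what you wrote: Lemma~\ref{lem:JT-tensor} as stated in the paper concerns $\Phi$ applied to the subcomplexes $\BGG_{\bf a}$ of the BGG resolution of the \emph{trivial} module (weight $\mu=0$), whereas for a ribbon $\lambda/\mu$ with $\mu\neq 0$ the complex ${\bf F}^{A,\lambda,\mu}_\bullet$ comes from the BGG resolution of $\bbs_\mu(\bbc^n)$, so you would need a separate argument (or a $\mu\neq 0$ analogue of Akin's lemma) to see that the surviving differentials are the multiplications $A_r\otimes A_s\to A_{r+s}$; you also leave the sign reconciliation open. None of this affects the correctness of the proposition as stated, since the K-class comparison already suffices.
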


\begin{proof}
  Since both definitions of the ribbon Schur functors for a Koszul algebra satisfy the same Jacobi--Trudi identities \cite[Theorem A.12]{vandebogert2023ribbon}, the result follows.
\end{proof}

\begin{remark}
    Ribbon Schur functors can be defined for any algebra since they only require the existence of an associative multiplication. One of the main results of \cite{vandebogert2023ribbon} is that these ribbon Schur functors satisfy the Jacobi--Trudi identity for all compositions \emph{if and only if} the underlying algebra $A$ is Koszul. Thus every Koszul algebra is partially $G$-totally positive. 
\end{remark}

\begin{remark}\label{rk:partialPositivity}
    Continuing with the theme of ``partial" $G$-positivity, we can imagine a spectrum of algebras with arbitrary graded algebras on one end and $G$-totally positive algebras on the other, parallel to the ${\rm PF}_\ell$ property. More precisely, a sequence is ${\rm PF}_\ell$ if all minors of order $\leq \ell$ of the associated Toeplitz matrix are non-negative. On the algebraic end, this translates into the property that $A^{\otimes n}$ admits a $\GL_n$-action for all $n \leq \ell$, and, employing Zelevinsky's functor as in the proof of Theorem \ref{thm:GPFalgebraStructure}, we see that the $G$-${\rm PF}_3$ property is enough to guarantee that a sequence $([A_i])_{i \geq 0}$ is the Hilbert function of some associative algebra which is generated in degree 1. (A sequence that satisfies the $G$-${\rm PF}_2$-property, i.e., log-concavity, can be realized as the Hilbert function of an algebra, not necessarily associative, which is generated in degree 1.)
\end{remark}

\section{Quadric Schur functors}\label{sec:quadricSchur}

In Section~\ref{sec:JTstructures}, we defined the notions of $G$-total positivity and Jacobi--Trudi structures and established their equivalence. Thus far the only examples of Jacobi--Trudi structures that we have seen come from building blocks induced by the (essentially trivial) Jacobi--Trudi structure on the symmetric algebra. In this section, we construct a significantly more subtle Jacobi--Trudi structure on quadric hypersurface rings. In the next section, we then use the induced Schur functors to construct explicit pure free resolutions over the quadric hypersurface ring.

In this section, we will assume that our underlying field has characteristic 0; for emphasis, we will just assume that it is the complex numbers $\bbc$.

\subsection{Jacobi--Trudi structures on quadric hypersurface rings}

Let $V$ be a vector space and let $q \in S^2(V^*)$ be a nonzero quadratic form on $V$. Let $\rad q$ be the radical of $q$, and choose a complement $V_\circ$ of $\rad q$ so that we have a vector space decomposition
\[
  V = \rad q \oplus V_\circ.
\]
We will call the triple $(V,q,V_\circ)$ as above a \defi{framed orthogonal space}. We define ${\rm O}(V,q,V_\circ)$ to be the subgroup of $\GL(V)$ that preserves $q$ and the above direct sum decomposition. 

In particular, $q$ gives a canonical isomorphism $V_\circ \cong V_\circ^*$, so we also have a dual quadric $q \in S^2(V_\circ)$ by taking the image of $q$ under this isomorphism. We will identify $q$ with its image under the inclusion $S^2(V_\circ) \subseteq S^2(V)$.
  
\begin{construction}\label{cons:UGVECons}
  Let $(V,q,V_\circ)$ be a framed orthogonal space and let $E$ be a vector space. Define the $\bbz$-graded Lie algebra $\g_{V,E}$ with components
    \[
      (\g_{V,E})_1 = V \otimes  E, \qquad (\g_{V,E})_2= \bigwedge^2 E,
    \]
    equipped with the Lie bracket (for $x\otimes e, x' \otimes e' \in V \otimes E$)
    \[
      [x \otimes e , x' \otimes e'] \coloneq q(x,x') e \w e'.
    \]
    All other Lie brackets are zero by degree reasons. There is an inclusion $S^2 (E \otimes V) \subset \uU_2 (\g_{V,E})$ obtained by identifying (via PBW) $\uU_2 (\g_{V,E}) \cong S^2 (E \otimes V) \oplus \bigwedge^2 E$. Using the inclusion $q \otimes  S^2 (E) \subset S^2 (V \otimes  E) \subset U_2 (\g_{V,E})$, let $I_{V,E}$ denote the left $\uU (\g_{V,E})$-ideal generated by $q \otimes  S^2 (E)$ inside of $\uU (\g_{V,E})$. 

    Similarly, we define the $\bbz$-graded Lie superalgebra $\g^!_{V,E}$ with components
    \[
      (\g^!_{V,E})_1 = V^* \otimes  E, \qquad (\g^!_{V,E})_2 = S^2 (E),
    \]
    equipped with the bracket
    \[
      [x \otimes e , x' \otimes e'] \coloneq q(x,x') e \cdot e'.
    \]
    (Collapsing the $\bbz$-grading to a $\bbz/2$-grading gives its superalgebra structure.)
    Notice that there is similarly an inclusion $q \otimes  \bigwedge^2 E \subset \bigwedge^2 (V^* \otimes  E) \subset U_2 (\g^!_{V,E})$. Let $I^!_{V,E}$ denote the left $\uU (\g^!_{V,E})$-ideal generated by $q \otimes  \bigwedge^2 E$ inside of $\uU (\g^!_{V,E})$.
  \end{construction}

\begin{example}\label{ex:rankOneComputation}
    When $\dim V = 1$, the quotient $\uU (\g_{\bbc , E}) / I_{\bbc , E}$ may be identified with a more familiar representation. Choose a basis $e_1 , \dots , e_n$ for $E$; by definition, the enveloping algebra $\uU ( \g_{\bbc , E})$ has relations of the form $e_i \otimes e_j - e_j \otimes e_i = e_i \w e_j$ for all $1 \leq i,j \leq n$. Taking the quotient by $S^2 (E)$ further imposes the relations $e_i \otimes e_j + e_j \otimes e_i = 0$, and thus 
    $$e_i \otimes e_j = \frac{1}{2} \cdot  e_i \w e_j \quad \text{for all} \ 1 \leq i,j \leq n.$$
    This means that as a $\gl (E)$-representation, the quotient $\uU ( \g_{\bbc , E}) / I_{\bbc , E}$ is precisely the spinor representation of $\so (E \oplus E^*)$, up to $1/2$-times the trace representation on $\gl (E)$.  Furthermore, this shows that $I_{\bbc, E}$ is a two-sided ideal and $\uU (\g_{\bbc, E})/I_{\bbc, E}$ is isomorphic to the exterior algebra on $E$.
\end{example}

\begin{theorem}[{\cite{charCoincidence}}]
  Assume $V$ is a framed orthogonal space equipped with a quadratic form $q$ of odd rank and set $A \coloneqq S^\bullet (V) / (q)$. Then we have an isomorphism of $\GL(E)$-representations
  \[
    Z_{V,E} \coloneqq \uU (\g_{V,E}) / I_{V,E} \cong A^{\otimes \dim E}
  \]
  which induces a Jacobi--Trudi structure on $A$.
\end{theorem}

\begin{example} \label{ex:rank1-super}
  If $q$ has rank 1, we can assign a $\bbz/2$-grading to $V$ by $V_0 = \rad q$ and $V_1 = V_\circ$ and we take $G = \GL(V_0|V_1)$, the general linear supergroup. Then the symmetric algebra $S^\bullet(V)$ (treating $V$ as a superspace) is isomorphic to $A$ (in the usual definition) as a $G$-representation. Furthermore, it follows that we have an isomorphism of algebras
  \[
    \uU(\g_{V,E}) \cong S^\bullet(V_0 \otimes E) \otimes \uU(\g_{V_1,E}).
  \]
  From this, we see that the ideal generated by $S^2(E)$ is a 2-sided ideal and, using Example~\ref{ex:rankOneComputation}, that the quotient algebra has an explicit description as
  \[
    \uU(\g_{V,E})/(S^2 E) \cong S^\bullet(V_0 \otimes E) \otimes \bigwedge(V_1 \otimes E) \cong S^\bullet(V \otimes E),
  \]
  where the last symmetric algebra is treating $V$ as a $\bbz/2$-graded vector space.

  Since $A$ is a polynomial representation, $G$ acts semisimply on all tensor powers. In fact, its Jacobi--Trudi structure is a special case of the standard Jacobi--Trudi structure on the symmetric algebra.
\end{example}

In the following, we assume as above that $V$ is a framed orthogonal space equipped with a quadratic form $q$ of odd rank and $A$ is the associated quadric hypersurface ring. 

\begin{definition}[Quadric Schur functors]\label{def:quadricSchurFunctor}
  Let $\mu \subseteq \lambda$ be partitions with $n \ge \ell(\lambda)$. The \defi{quadric Schur functor} is defined to be
  \[
    \bbs_{\lambda / \mu}^A = \hom_{\GL_n}(\bbs_{\lambda/\mu}(\bbc^n), {\rm U}(\g_{V,\bbc^n})/I_{V,\bbc^n}).
  \]
  Likewise, the \defi{dual quadric Schur functor} is defined to be
  \[
    \bbs_{\lambda / \mu}^{A^!} = \hom_{\GL_n}(\bbs_{\lambda/\mu}(\bbc^n), {\rm U}(\g^!_{V,\bbc^n})/I^!_{V,\bbc^n}). \qedhere
  \]
\end{definition}

\begin{remark}\label{rk:submaximalPositivity}
    When $A$ is a quadric hypersurface ring associated to an \emph{even} rank quadric, the virtual character
    $$[\bbs^A_\lambda ] = \det ( [A_{\lambda_i - i + j} ] )_{i,j=1}^{\ell (\lambda)}$$
    is not necessarily the character of any $\SO (2n)$-representation when $\ell (\lambda ) > \dim V$. For instance, Example~\ref{ex:segreNonpositive} shows that when $n=4$, the virtual character $s_{2,2,2}^A$ is not an actual character (see Remark~\ref{rk:positivityFailure} for more details).

    To fix this, we must restrict to a submaximal torus (in other words, set $x_n = 1$, where $\dim V = 2n$). This allows us to view the virtual character $[\bbs^A_\lambda ]$ as living in the character ring ${\rm K} (\SO (2n-1))$, in which case there is a familiar looking description in terms of the odd-orthogonal quadric Schur characters:
    $$[\bbs^A_\lambda ] = \sum_{\substack{\mu \subset \lambda \\ \lambda / \mu  \ \text{horizontal strip}}} [\bbs^{A'}_\mu ],$$
    where $A'$ denotes the odd rank quadric hypersurface ring associated to $\SO (2n-1)$. This follows from the isomorphism of character rings taken advantage of in the work \cite{charCoincidence}: the ``even" quadric Schur functor should correspond to the odd-symplectic character \cite{proctor1988odd} of the classically defined Schur functor $\bbs_\lambda (\bbc^{2n-1|1})$; the classical $\GL_{n-1} \hookrightarrow \GL_n$ branching rule yields the equality
    $$\bbs_\lambda (\bbc^{2n-1|1}) = \bigoplus_{\substack{\mu \subset \lambda \\ \lambda / \mu  \ \text{horizontal strip}}} \bbs_{\mu} (\bbc^{2n-2|1}).$$
    Since the $\Spo(2n-2|1)$-character of $\bbs_{\mu} (\bbc^{2n-2|1})$ is precisely the ${\rm O} (2n-1)$-character of the odd quadric Schur module $\bbs^A_\mu$, the character equality posed above follows (and thus we obtain positivity of the type D quadric hypersurface ring after restriction to a submaximal torus). 
\end{remark}

Because of this subtlety between the positivity properties of the type B and D cases, whenever we refer to a quadric Schur module without any reference to the ambient quadric hypersurface, we can only view these objects as representations of an odd-orthogonal group in general.

\begin{prop} \label{prop:quadric-schurdim}
Set $m=\dim V$. Then $\bbs_{\lambda}^A=0$ if and only if $\lambda_m>1$. In particular, $A$ has Schur dimension $(m-1)|1$.
\end{prop}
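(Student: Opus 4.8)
The plan is to reduce the statement to a Hilbert-function computation, exploiting the fact that the dimension of a quadric Schur functor sees only the graded dimensions of $A$. By Corollary~\ref{cor:tensorGradedPieces} the $\GL_n$-module $\uU(\g_{V,\bbc^n})/I_{V,\bbc^n}$ realizes the Jacobi--Trudi structure on $A$, so the quadric Schur functor $\bbs_\lambda^A$ of Definition~\ref{def:quadricSchurFunctor} is precisely the Schur functor attached to that Jacobi--Trudi structure. Proposition~\ref{prop:JT-det} then gives $\dim \bbs_\lambda^A = \det(\dim A_{\lambda_i - i + j})_{i,j=1}^n$, so whether $\bbs_\lambda^A$ vanishes depends only on the numbers $\dim A_d$, $d\ge 0$.

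Next I would compute that Hilbert function and match it with a known example. Since $q$ is a nonzerodivisor on the domain $S^\bullet(V)$, multiplication by $q$ gives short exact sequences $0 \to S^{d-2}(V) \to S^d(V) \to A_d \to 0$, so $\sum_d (\dim A_d)\, t^d = \frac{1-t^2}{(1-t)^m} = \frac{1+t}{(1-t)^{m-1}}$. This is exactly the Hilbert series of the super-symmetric algebra $B = S^\bullet(W)$ on a vector superspace $W$ with $\dim W_0 = m-1$ and $\dim W_1 = 1$, since $B \cong S^\bullet(W_0) \otimes \bigwedge^\bullet(W_1)$. By Example~\ref{ex:symJTstructure}, $B$ admits a Jacobi--Trudi structure, so likewise $\dim \bbs_\lambda^B = \det(\dim B_{\lambda_i-i+j})_{i,j=1}^n$; and by Example~\ref{ex:schur-dim}, $B$ has Schur dimension $(m-1)|1$, i.e. $\bbs_\lambda^B = 0$ if and only if $\lambda_{(m-1)+1} = \lambda_m > 1$.

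Finally I would combine these: since $\dim A_d = \dim B_d$ for all $d$, the two determinantal formulas give $\dim \bbs_\lambda^A = \dim \bbs_\lambda^B$, and hence $\bbs_\lambda^A = 0$ if and only if $\lambda_m > 1$. The last assertion is then immediate from the definition of Schur dimension, as we have shown that the order ideal $\{\lambda \mid \bbs_\lambda^A = 0\}$ equals $\{\lambda \mid \lambda_{(m-1)+1} > 1\}$.

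I do not expect a genuine obstacle; the only point requiring care is to observe that the vanishing of a quadric Schur functor is intrinsic to the Hilbert function (so no orthogonal-group representation theory is needed), which is what allows the answer to be transported from the super-symmetric algebra. If one preferred a self-contained argument avoiding Example~\ref{ex:schur-dim}, one could instead evaluate the Jacobi--Trudi determinant $\det(\dim A_{\lambda_i - i + j})$ directly by elementary column operations using the recursion $\dim A_d = \dim S^d(\bbc^{m-1}) + \dim S^{d-1}(\bbc^{m-1})$, but routing through the hook-Schur-function formula is cleaner.
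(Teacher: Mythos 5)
Your proposal is correct and follows essentially the same route as the paper: both reduce via Proposition~\ref{prop:JT-det} to the observation that vanishing of $\bbs_\lambda^A$ depends only on the Hilbert function of $A$, identify that Hilbert function with the one for $S^\bullet(\bbc^{m-1|1})$ (the paper does this by specializing to the rank-one quadric as in Example~\ref{ex:rank1-super}, which is the same algebra you compare against), and then invoke the hook Schur function vanishing criterion of Example~\ref{ex:schur-dim}.
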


\begin{proof}
  From Proposition~\ref{prop:JT-det}, $\dim \bbs^{\lambda}_A$ is independent of the quadric $q$ since $\dim A_d$ is. In particular, it suffices to prove this statement when $q$ has rank 1 and $q = x_1^2$. By Example~\ref{ex:rank1-super}, $\bbs_\lambda^A$ is the usual Schur functor $\bbs_\lambda(\bbc^{m-1|1})$, so we can appeal to Example~\ref{ex:schur-dim}.
\end{proof}

\begin{example}
  Suppose that $q$ has full rank. 
    The quadric Schur functors have more explicit descriptions in terms of orthogonal group representations in some special cases. Firstly, when $\lambda = (d)$ the quadric Schur functor $\bbs_{(d)}^A$ is equal to $A_d$, which in turn is the irreducible ${\rm O} (V)$-representation $S^d(V)/S^{d-2}(V)$ of highest weight $d$. When $\lambda = (1^d)$, the quadric Schur functor $\bbs_{(1^d)}^A$ is precisely the (dual of the) degree $d$ piece of the quadratic dual $(A^!_d)^*$, which admits a direct sum decomposition
    \[
      \bbs_{(1^d)}^A = \bigwedge^d V \oplus \bigwedge^{d-2} V \oplus \bigwedge^{d-4} V \oplus \cdots . \qedhere
    \]
  \end{example}

  For general $\lambda$, we can give a uniform direct sum decomposition in the ``stable range''. In what follows, given a partition $\mu$ with $2\ell(\mu) \le \dim V$, we use $\bbs_{[\mu]}(V)$ to denote the irreducible representation of ${\rm O}(V)$ with highest weight $\mu$.

  \begin{prop} \label{prop:stable-decomp}
    Suppose that $q$ has full rank and assume that $2\ell (\lambda) \leq \dim V$. There is an ${\rm O}(V)$-equivariant isomorphism:
    \[
      \bbs_\lambda^A \cong \bigoplus_{\mu , \nu} \bbs_{[\mu]} (V)^{\oplus c^{\lambda}_{\mu, (2 \nu)^T}},
    \]
    where in the above, $c^{\lambda}_{\mu, (2 \nu)^T}$ denotes the Littlewood--Richardson coefficient. 
  \end{prop}

  \begin{proof}
    We will use the theory of the universal character ring of the orthogonal group from \cite[\S 2.1]{koiketerada}. This gives a basis $\chi_O(\lambda)$ of the ring of symmetric functions $\Lambda$ indexed by partitions $\lambda$ such that $s_\lambda = \sum_{\mu,\nu} c^{\lambda}_{\mu, 2\nu} \chi_O(\mu)$ \cite[Theorem 2.3.1]{koiketerada}. In particular, we have $s_{1,\dots,1} = \chi_O(1,\dots,1)$. It follows from the dual Jacobi--Trudi identity that for any $r \ge \lambda_1$, we have
    \[
      \sum_{\mu,\nu} c^{\lambda^T}_{\mu, 2\nu} \chi_O(\mu) = s_{\lambda^T} = \det(s_{(1^{\lambda_i - i + j})})_{i,j=1}^r = \det(\chi_O(1^{\lambda_i - i + j}))_{i,j=1}^r
    \]

    Also, there is a ring involution $i_O$ on $\Lambda$ such that $i_O(\chi_O(\lambda)) = \chi_O(\lambda^T))$ \cite[Theorem 2.3.4]{koiketerada}. Applying this to the identity above gives
    \[
            \det(\chi_O(\lambda_i - i + j))_{i,j}^{r} = \sum_{\mu,\nu} c^{\lambda^T}_{\mu, 2\nu} \chi_O(\mu^T) = \sum_{\mu,\nu} c^{\lambda}_{\mu, (2\nu)^T} \chi_O(\mu)
          \]
          where in the last sum, we reindexed and used that the Littlewood--Richardson coefficients are invariant upon applying the transpose involution. Finally, we use the specialization map $\pi \colon \Lambda \to {\rm K}({\rm O}(V))$ \cite[\S 2.4]{koiketerada}. We just need to know that this is a ring homomorphism and that if $2\ell(\lambda) \le \dim V$, then $\pi(\chi_O(\lambda))$ is the character of the irreducible representation $\bbs_{[\lambda]}(V)$. Applying this to the last identity finishes the proof.
        \end{proof}

        \begin{remark}
          The proof above actually gives the character of $\bbs_\lambda^A$ for an odd quadric hypersurface in general, but the answer depends on computing the terms $\pi(\chi_O(\lambda))$. In general, this is either 0 or the character of an irreducible representation up to a sign and is computed by a combinatorial rule. Since the end result is the character of a representation, the negative signs must cancel with positive signs, but this method does not make clear which representations appear with positive coefficient in the case that $2\ell(\lambda) > \dim V$. Similarly, in the type D case, we can only guarantee that the coefficients will be non-negative after restricting to a submaximal torus. 
        \end{remark}

\begin{remark}\label{rmk:sympletic}
  In \cite[Theorem 4.3.4]{stability-patterns}, it is shown that there is a duality between the algebraic representations of the infinite rank symplectic group ${\rm Sp}(\infty)$ and the infinite rank orthogonal group ${\rm O}(\infty)$. This duality predicts that we can define a symplectic version of the above constructions.

  In this case, we should replace the quadric hypersurface with the enveloping algebra of the Heisenberg Lie algebra $U \oplus \bbc$ where $U$ is our symplectic space. Then we are instead looking at an algebra whose $d$th graded piece is isomorphic to $\Sym^d U \oplus \Sym^{d-2} U \oplus \cdots$. However, if $\dim U = 2$, then the Jacobi--Trudi determinant for the partition $(1,1,1)$ gives something with dimension $\det \begin{bmatrix} 2 & 4 & 6 \\ 1 & 2 & 4 \\ 0 & 1 & 2 \end{bmatrix} = -2$, so there cannot be a Jacobi--Trudi structure on such an algebra. On the other hand, the stable representation theory of the symplectic group combined with the results of the next section instead imply that the enveloping algebra of the Heisenberg Lie algebra admits a partial Jacobi--Trudi structure as in Remark \ref{rk:partialPositivity}. More precisely, $A^{\otimes n}$ admits a $\GL_n$-action with the desired weight spaces for $n \leq \dim U / 2$. 
\end{remark}

\subsection{Type D quadric Schur modules: the stable case} \label{ss:geom}

 In this section, we prove that even though the even rank quadric hypersurface does not admit a full (equivariant) Jacobi--Trudi structure in general, it does admit a \emph{partial} Jacobi--Trudi structure as in Remark \ref{rk:partialPositivity}. This result will be crucial for proving that the functors $\Phi^X$ of the next subsection are suitably well-behaved.

For this section, we will assume that $q$ has full rank and that $\dim V = 2d$. In that case, we must have $V=V_\circ$ in the definition of framed orthogonal space, so we will suppress this from the notation.

A subspace $W \subset V$ is isotropic if $q(v,w) = 0$ for all $v,w \in W$. The isotropic Grassmannian $\igr(e,V)$ is the variety which parametrizes all $e$-dimensional isotropic subspaces of $V$. If $d>e$, then this is a connected variety. Otherwise, $\igr(d,V)$ has 2 connected components which are isomorphic to one another. We can distinguish the two by fixing a reference isotropic subspace $R_0$ of dimension $d$; the function $\igr(d,V) \to \bbz/2$, given by $R \mapsto \dim(R \cap R_0) \pmod 2$, distinguishes the components. We will choose one (the choice is irrelevant for our purposes) and denote it by $\igr^+(d,V)$. The tautological isotropic subbundle $\R$ is the restriction of the standard rank $d$ tautological bundle on the Grassmannian to $\igr^+ (d,V)$. 

    Throughout this section, we will set
    \[
      X = \igr^+(d,V).
    \]
    Finally, we note that the quadratic form induces an isomorphism $V/\R \cong \R^*$.

    Below, we let $\underline{\g_{V,E}} = \g_{V,E} \times X$, which we think of as a trivial sheaf of Lie algebras over $X$.
    
    \begin{prop} \label{thm:largeRankTorIso}  
  Let $V$ be a $2d$-dimensional orthogonal space with $d > \dim E$ equipped with a full rank quadric $q$.  Then:
    \begin{enumerate}
        \item The bundle $\R \otimes E$ is a Lie subalgebra subsheaf in $\underline{\g_{V,E}}$.

        \item Define the $\uU (\underline{\g_{V,E}})$-module
          \[
            \eta \coloneq \uU( \underline{\g_{V,E}})  \otimes_{\uU ( \R \otimes E)} \OO_X.
          \]
          There is an exact complex of vector bundles
          \[
            0 \to \bigwedge^{d \cdot \dim E} (\R \otimes  E) \otimes  \uU (\underline{\g_{V,E}}) \to \cdots \to (\R \otimes  E) \otimes  \uU (\underline{\g_{V,E}}) \to \uU (\underline{\g_{V,E}}) \to \eta \to 0,
          \]
          and in particular, ${\rm H}^0(X, \eta)$ is a left $\uU(\g_{V,E})$-module.
          
        \item There is an isomorphism 
    $$\tor_i^{\uU (\g_{V,E})} (\bbc , {\rm H}^0 (X , \eta) )  \cong \bigwedge^i S^2 (E),$$
    and an isomorphism of $\uU (\g_{V,E})$-modules ${\rm H}^0 (X, \eta) \cong \uU (\g_{V,E}) / I_{V,E}$. 
    \end{enumerate}
\end{prop}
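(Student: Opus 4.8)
The plan is to dispatch (1) and (2) quickly and put essentially all the work into (3), which is a cohomology-and-Tor computation on \(X=\igr^+(d,2d)\).

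For (1): since \(\R\) is isotropic, \(q(r,r')=0\) for all local sections \(r,r'\) of \(\R\), so the Lie bracket \((V\otimes E)\times(V\otimes E)\to\bigwedge^2 E\) vanishes identically on the subbundle \(\R\otimes E\subset(\underline{\g_{V,E}})_1\); hence \(\R\otimes E\) is an abelian Lie subalgebra subsheaf. For (2): abelianness gives \(\uU(\R\otimes E)=S_{\OO_X}(\R\otimes E)\), which carries the usual Koszul resolution of \(\OO_X\) of length \(d\cdot\dim E\) with \(j\)th term \(S_{\OO_X}(\R\otimes E)\otimes_{\OO_X}\bigwedge^{j}(\R\otimes E)\). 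Choosing a subbundle complement \(\mathfrak c\) to \(\R\otimes E\) in \(\underline{\g_{V,E}}\) and applying PBW (Theorem~\ref{thm:PBWdegeneration}, sheaf version), I get \(\uU(\underline{\g_{V,E}})\cong S_{\OO_X}(\R\otimes E)\otimes_{\OO_X}S_{\OO_X}(\mathfrak c)\) as left \(S_{\OO_X}(\R\otimes E)\)-modules, so \(\uU(\underline{\g_{V,E}})\) is locally free, hence flat, as a right \(\uU(\R\otimes E)\)-module; tensoring the Koszul resolution up by \(\uU(\underline{\g_{V,E}})\otimes_{\uU(\R\otimes E)}(-)\) then yields the asserted exact complex of vector bundles with cokernel \(\eta\), and \(H^0(X,\eta)\) is a left \(\uU(\g_{V,E})\)-module because \(\eta\) is a module over \(\uU(\underline{\g_{V,E}})=\uU(\g_{V,E})\otimes_\bbc\OO_X\).

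For (3) I would first compute \(\mathbf R\Gamma(X,\eta)\). Using \(V/\R\cong\R^*\) and taking \(\mathfrak c=(\R^*\otimes E)\oplus(\bigwedge^2 E\otimes\OO_X)\), the PBW isomorphism identifies \(\eta\cong\bigoplus_{a,b\ge0}S^a(\R^*\otimes E)\otimes_\bbc S^b(\bigwedge^2 E)\) as \(\OO_X\)-modules. Since \(S^a(\R^*\otimes E)=\bigoplus_{|\lambda|=a}\bbs_\lambda(\R^*)\otimes\bbs_\lambda(E)\) only involves \(\lambda\) with \(\ell(\lambda)\le\dim E<d\) — the stable range — Borel–Weil–Bott gives \(H^{>0}(X,\bbs_\lambda\R^*)=0\), so \(H^{>0}(X,\eta)=0\) and \(M:=H^0(X,\eta)=\mathbf R\Gamma(X,\eta)\). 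For the Tor groups, the key point is that \(\g_{V,E}\) is finite-dimensional, so \(\bbc\) has the finite Chevalley–Eilenberg resolution over \(\uU(\g_{V,E})\), whence \(\bbc\otimes^{\mathbf L}_{\uU(\g_{V,E})}(-)\simeq\bigl(\bigwedge^\bullet\g_{V,E}\otimes_\bbc -\bigr)\) is an exact bounded functor commuting with \(\mathbf R\Gamma(X,-)\). Applying it to the resolution \(F_\bullet\to\eta\) of part (2): each \(F_j\) is \(\uU(\g_{V,E})\)-free with \(\bbc\otimes_{\uU(\g_{V,E})}F_j\cong\bigwedge^{j}(\R\otimes E)\), and the induced differentials all vanish because the augmentation kills \(\g_{V,E}\), through which the differentials of \(F_\bullet\) factor; hence
\[
  \tor^{\uU(\g_{V,E})}_i(\bbc,M)\;\cong\;\bigoplus_{j\ge0}H^{\,j-i}\bigl(X,\,\textstyle\bigwedge^{j}(\R\otimes E)\bigr).
\]
It then remains to compute \(H^\bullet(X,\bigwedge^{j}(\R\otimes E))=\bigoplus_{\lambda\vdash j}H^\bullet(X,\bbs_\lambda\R)\otimes\bbs_{\lambda^T}(E)\), again by Borel–Weil–Bott in the stable range: running Bott's algorithm for the \(D_d\) root system on the weight attached to \(\bbs_\lambda\R\), one should find that \(H^\bullet(X,\bbs_\lambda\R)\) vanishes unless \(\bbs_{\lambda^T}(E)\) occurs in the plethysm \(\bigwedge^{|\lambda|/2}(S^2E)\) (equivalently \(\lambda^T\) is "almost symmetric"), in which case \(H^{|\lambda|/2}(X,\bbs_\lambda\R)\) is one-dimensional and all other cohomology vanishes. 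Summing these, \(H^\bullet(X,\bigwedge^{j}(\R\otimes E))\) is concentrated in degree \(j/2\), vanishes for odd \(j\), and equals \(\bigwedge^{j/2}(S^2E)\) when \(j=2i\), so \(\tor^{\uU(\g_{V,E})}_i(\bbc,M)\cong\bigwedge^{i}(S^2E)\).

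Finally, for the isomorphism \(M\cong\uU(\g_{V,E})/I_{V,E}\): the surjection \(\uU(\underline{\g_{V,E}})\to\eta\) has kernel the left-ideal subsheaf \(K\) generated by \(\R\otimes E\); since \(H^{>0}(X,\OO_X)=0\) (as \(X\) is rational homogeneous) and \(H^{>0}(X,\eta)=0\), the long exact sequence forces \(H^1(X,K)=0\), so \(\uU(\g_{V,E})\onto M\) with kernel \(H^0(X,K)\). Trivializing \(\R\) locally by a hyperbolic frame adapted to \(q\) shows that \(q\otimes S^2E\) lies in \(K\) everywhere locally, so \(I_{V,E}\subseteq H^0(X,K)\) and the surjection factors through a graded, \(\GL(E)\)-equivariant map \(\uU(\g_{V,E})/I_{V,E}\onto M\). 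I would then compare multigraded Hilbert series: Corollary~\ref{cor:tensorGradedPieces} gives \(\hs(\uU(\g_{V,E})/I_{V,E})=\prod_i\hs_A(x_i)\), while the Euler characteristic identity — legitimate because \(M\) has finite projective dimension over the connected graded ring \(\uU(\g_{V,E})\), by the Tor computation — gives \(\hs(M)=\hs(\uU(\g_{V,E}))\cdot\prod_{1\le i\le j\le n}(1-x_ix_j)=\prod_i\hs_A(x_i)\) as well; a surjection of graded modules with equal finite-dimensional weight spaces is an isomorphism. The main obstacle is transparently the two Borel–Weil–Bott computations on the isotropic Grassmannian, above all the identification of \(H^\bullet(X,\bigwedge^{j}(\R\otimes E))\) with the plethysm \(\bigwedge^{j/2}(S^2E)\): this is precisely where the hypothesis \(d>\dim E\) is indispensable, since it keeps every relevant \(\bbs_\lambda\) inside the range where \(\igr^+(d,2d)\) behaves cohomologically like an ordinary Grassmannian and Bott's algorithm can be run uniformly.
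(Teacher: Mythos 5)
Parts (1) and (2) of your argument coincide with the paper's (isotropy of $\R$ kills the bracket; induct the Chevalley--Eilenberg/Koszul resolution up along the flat extension $\uU(\R\otimes E)\subset\uU(\underline{\g_{V,E}})$), so the real comparison is in (3), where you genuinely diverge. Both you and the paper reduce the Tor computation to the groups ${\rm H}^j(X,\bigwedge^{i+j}(\R\otimes E))$. The paper then never computes these directly: it observes that the same groups compute $\tor_i$ of the commutative model ${\rm H}^0(X,S^\bullet(\R^*\otimes E)\otimes S^\bullet(\bigwedge^2E))$ over $S^\bullet(V^*\otimes E)\otimes S^\bullet(\bigwedge^2E)$ via the geometric technique, and that this commutative quotient is the coordinate ring of the variety of isotropic maps $E\to V^*$, which by the cited Theorem 3.8 of \cite{sam2015littlewood} is a complete intersection with rational singularities cut out by $S^2E$; its Tor is therefore $\bigwedge^iS^2E$ by the Koszul complex, and the enveloping-algebra Tor must agree. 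You instead propose to evaluate ${\rm H}^\bullet(X,\bigwedge^j(\R\otimes E))$ directly via the Cauchy decomposition and Borel--Weil--Bott on $\igr^+(d,V)$. That route is viable in the range $\dim E<d$ and your predicted answer ($\bigwedge^{j/2}(S^2E)$ concentrated in degree $j/2$) is correct, but you have not actually run Bott's algorithm: the phrase ``one should find'' stands in for essentially all of the technical content of part (3), which is exactly what the citation to \cite{sam2015littlewood} supplies. Relatedly, observing that the induced differentials on $\bbc\otimes_{\uU}F_\bullet$ vanish only kills the first page of the hypercohomology spectral sequence; you still need degeneration (here it follows from $\GL(E)$-equivariance, since the potential higher differentials connect isotypic pieces sitting in different polynomial degrees in $E$), which is part of what the appeal to \cite[\S 5.1]{weyman2003} handles in the paper.

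The final step of your argument is circular as written: you invoke Corollary~\ref{cor:tensorGradedPieces} for the Hilbert series of $\uU(\g_{V,E})/I_{V,E}$, but that corollary is deduced from Theorem~\ref{thm:firstMainTorThm}, whose proof rests on the present proposition. Without the Tor computation one only gets bounds on $\hs(\uU(\g_{V,E})/I_{V,E})$ in the wrong direction for concluding that your surjection $\uU(\g_{V,E})/I_{V,E}\onto{\rm H}^0(X,\eta)$ is injective. The paper's argument avoids Hilbert series entirely: $\tor_0=\bbc$ and $\tor_1=S^2E$ (trivial ${\rm O}(V)$-type, internal degree $2$) show that ${\rm H}^0(X,\eta)$ is the quotient of $\uU(\g_{V,E})$ by the left ideal generated by a subrepresentation of $\uU_2(\g_{V,E})$ isomorphic to $S^2E$ with trivial ${\rm O}(V)$-action, and $q\otimes S^2E$ is the unique such subrepresentation. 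Your local hyperbolic-frame computation that $q\otimes S^2E$ lies in the kernel is correct and useful; combined with the degree-$2$ count coming from $\tor_1$ it yields the same conclusion non-circularly, so the repair is routine, but as stated this step does not stand on its own.
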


\begin{proof}
    \textbf{Proof of (1):} This follows by definition of the Lie bracket on $\g_{V,E}$.

    \textbf{Proof of (2):} Apply $\uU (\underline{\g_{V,E}}) \otimes_{\uU (\R \otimes E)} -$ to the Chevalley--Eilenberg resolution of $\uU (\R \otimes E)$ (this preserves exactness since the extension $\uU (\R \otimes E) \subset \uU (\underline{\g_{V,E}})$ is flat).

    \textbf{Proof of (3):}
    For the moment, ignore the direct summand $\bigwedge^2 E$ so that we have a short exact sequence
    \[
      0 \to \R \otimes E \to (V \otimes E) \otimes \OO_X \to \R^* \otimes E \to 0.
    \]
    Identifying the middle term with the functions on $\hom(E,V^*) \times X$, we have a projection map ${\rm Spec}_X(S^\bullet(\R^* \otimes E)) \to \hom(E,V^*)$. The image consists of linear maps $E \to V^*$ whose image is contained in a subspace of the form $W^*$ for some isotropic subspace $W$ that is parametrized by the component $X$ in $\igr(d,V)$; since $2\dim W = \dim V$, this implies that $W^*$ is isotropic as well. Since $\dim E < d$, this simply means that the image of $E$ is an isotropic subspace. In particular, in the notation of \cite[\S 3.3]{sam2015littlewood}, the image of this projection is the variety $Y_{\dim E, \dim E, d}$. By \cite[Theorem 3.8]{sam2015littlewood}, this variety has rational singularities and is a complete intersection with ideal generated by $S^2E$.
    
    Define the bundle of algebras $R \coloneq S^\bullet (\R^* \otimes  E) \otimes  S^\bullet \left( \bigwedge^2 E \right)$. Then \cite[\S 5.1]{weyman2003} and the above facts imply that there is an isomorphism
    $$\tor_i^{S^\bullet (V^* \otimes  E) \otimes  S^\bullet \left( \bigwedge^2 E \right)} (\bbc , {\rm H}^0 (X , R)) = \bigoplus_{j \geq 0} {\rm H}^j \left( X , \bigwedge^{i+j} (\R \otimes  E) \right).$$
Similarly, we also get an isomorphism (the rational singularities claim is equivalent to the higher cohomology of $\eta$ vanishing, but by PBW this has the same underlying vector bundle as $R$)
    $$\tor_i^{\uU (\g_{V,E})} \left( \bbc , {\rm H}^0 (X, \eta) \right) =  \bigoplus_{j \geq 0} {\rm H}^j \left( X , \bigwedge^{i+j} (\R \otimes  E) \right).$$
    Since the righthand side of both of the above equalities are evidently equal, there are isomorphisms
    $$\tor_i^{\uU (\g_{V,E})} \left( \bbc , {\rm H}^0 (X , \eta) \right) \cong \tor_i^{S^\bullet (V^* \otimes  E) \otimes  S^\bullet \left( \bigwedge^2 E \right)} (\bbc , {\rm H}^0 (X , R)) \cong \bigwedge^i S^2 (E).$$
    
    The final isomorphism ${\rm H}^0 (X , \eta) \cong \uU (\g_{V,E}) / I_{V,E}$ is an immediate consequence of the above Tor computation, since ${\rm H}^0 (X , \eta)$ is thus obtained as the quotient of $\uU (\g_{V,E})$ by the left ideal generated by $S^2 (E)$; since there is a unique such subrepresentation, the result follows.    
  \end{proof}

\begin{remark} \label{rmk:comm-degen}
  A consequence of the previous proof is that, when $\dim V = 2d$ and $d>\dim E$, $S^2 E$ is a Gr\"obner basis under the PBW degeneration of $\uU(\g_{V,E})$ to the commutative algebra $S^\bullet(\g_{V,E})$ and the resulting ideal is a complete intersection.

  This can fail if the assumptions are not met. For instance, take $\dim E = \dim V = 2$, so that after choosing bases this ideal has 3 generators
  $$(x_{11}^2 + x_{21}^2 , x_{11} x_{12} + x_{21} x_{22} , x_{12}^2 + x_{22}^2) \subset \bbc [x_{11} , x_{12} , x_{21} , x_{22}  ],$$
  and has codimension 2.
\end{remark}

\begin{cor}
    Let $V$ be an orthogonal space of dimension $2n$ and let $A = S^\bullet (V) / (q)$ be the associated quadric hypersurface ring. Then the tensor powers $A^{\otimes k}$ admit a $\GL_k$-action satisfying $(1)-(3)$ of Definition \ref{def:JTAlgebras} for all $k \leq n$. Equivalently, for all partitions $\lambda$ with $\ell (\lambda) \leq n$, there exists an ${\rm O} (V)$-representation $\bbs^A_\lambda$ satisfying the Jacobi--Trudi identity with respect to $A$. 
\end{cor}

\begin{proof}
    We prove the more precise statement that for every weight $\mu \in \bbz^k$ with $k \leq n$, there is an equality
    $$(\uU (\g_{V,E}) / I_{V,E})_\mu \cong A_{\mu_1} \otimes  A_{\mu_2} \otimes  \cdots \otimes  A_{\mu_{\dim E}},$$
    where $k = \dim E$. We proceed by induction on $\dim E$. When $\dim E = 1$, there is an isomorphism of $\bbc$-vector spaces:
    \[
      \uU (\g_{V,E} ) \cong S^\bullet (V \otimes E).
    \]
    The quotient by $S^2(E)$ translates to the quotient by the principal ideal generated by the defining quadric $q$. Thus $\uU (\g_{V,E}) / I_{V,E} \cong S^\bullet (V) / (q)$, and restricting to weight spaces corresponds to restricting to graded pieces.

    Assume now that $\dim E =: k > 1$ and write $E = E' \oplus \bbc e$. There is a direct sum decomposition $\bigwedge^2 E \cong \bigwedge^2 E' \oplus E' \otimes e$, and combining this with Proposition \ref{thm:largeRankTorIso}(3) yields an equality of multigraded Hilbert series:
    \[
      \hs (\uU (\g_{V,E})/I_{V,E}) = \frac{  \displaystyle\prod_{1 \leq i \leq j \leq k-1} (1 - x_i x_j) \cdot \prod_{1 \leq i \leq k-1} (1-x_i x_k) \cdot (1-x_k^2) }{\displaystyle \prod_{1 \leq i \leq k-1} (1 - [V] x_i) \cdot (1- [V] x_k) \cdot \prod_{1 \leq i < j \leq k-1} (1 - x_i x_j) \cdot \prod_{1 \leq i \leq k-1} (1 - x_i x_k)},
    \]
     where the above Hilbert series is viewed as having coefficients in the ring ${\rm K}(G)[x_1 , \dots , x_k]$.  The term $\prod_{1 \leq i \leq k-1} (1 - x_i x_k)$ cancels in the above expression, whence there is an equality of multigraded Hilbert series
     \[
       \hs (\uU (\g_{V,E}) / I_{V,E} ) = \hs ( \uU (\g_{V,E'} ) / I_{V , E'}) \cdot \hs (\uU (\g_{V,\bbc e} ) / I_{V , \bbc e}).
       \]
    Computing the component of $\uU (\g_{V,E}) / I_{V,E}$ with weight $\mu = ( \mu_1 , \dots , \mu_k)$ is equivalent to computing the K-class coefficient of $x_1^{\mu_1} \cdots x_n^{\mu_k}$. By the above equality combined with the inductive hypothesis, this K-class is precisely $[A_{\mu_1} \otimes  A_{\mu_2} \otimes  \cdots \otimes  A_{\mu_k} ]$.
\end{proof}

\begin{remark}\label{rk:positivityFailure}
    As alluded to previously, the even-orthogonal hypersurface ring does not necessarily admit a Jacobi--Trudi structure. An easy example of this is already provided by Example \ref{ex:segreNonpositive}: assume that $\dim V = \dim W = 2$; the Segre product $S^\bullet (V) \circ S^\bullet (W)$ may be realized as a quadric hypersurface ring corresponding to some even rank quadric $q$ (concretely, this is just the quotient $\bbc [x_{11}, x_{12}, x_{21} , x_{22}]/(x_{11} x_{22} - x_{12} x_{12} )$). The isomorphism $\so (4) \cong \sl (2) \times \sl (2)$ implies that non-positivity of the virtual character $[\bbs^{S^\bullet (V) \circ S^\bullet (W)}_{(2,2,2)}]$ contradicts the existence of a Jacobi--Trudi structure on the even rank quadric hypersurface ring. A direct computation shows that the restriction to the diagonal $ \so (3) \cong \sl (2) \subset \sl (2) \times \sl (2) \cong \so(4)$ \emph{is} positive, as expected by Remark \ref{rk:submaximalPositivity}. 
\end{remark}

\subsection{The functors $\Phi^X$} \label{ss:phi}

In this section, we use the Jacobi--Trudi structure on quadric hypersurface rings to define functors $\Phi^X$  on the category of $\GL_n$-representations. The functors defined here will not generally be monoidal, but we will prove a weaker statement about the behavior of these functors on certain types of representations that will end up being enough to construct general Pieri-type resolutions over any quadric hypersurface. 

Let $R$ be a $\bbc$-algebra. We say that $X$ is a \defi{polynomial $R \times \GL(E)$-module} if it has commuting actions of $R$ and $\GL(E)$ which makes $X$ a polynomial representation of $\GL(E)$; in addition, we say that $X$ is a \defi{polynomial $R \times \GL(E)$-algebra} if it is a polynomial $R \times \GL(E)$-module equipped with an associative algebra structure whose product is $R \times \GL(E)$-equivariant. Finally, we define an \defi{$X$-module} to be a polynomial $R \times \GL(E)$-module $Y$ such that the product map $X \otimes Y \to Y$ is $R \times \GL(E)$-equivariant.

\begin{definition}\label{def:thePhiFunctors}
    Let $E$ be a vector space and $X$ a polynomial $R \times \GL(E)$-module. Define the functor
    \[
      \Phi^X \colon \operatorname{Rep} (\GL (E)) \to R\operatorname{-Mod}, \qquad
      \Phi^X (M) \coloneq (X \otimes M)^{\GL (E)},
    \]
    where $(-)^{\GL(E)}$ denotes $\GL (E)$-invariants. 
\end{definition}

\begin{prop}\label{prop:monoidalPhiFunctors}
  Let $X$ be a polynomial $R \times \GL(E)$-algebra and let $Y$ be an $X$-module.
  \begin{enumerate}
        \item There is a canonical morphism of bifunctors
          \[
            \Phi^X (-)  \otimes \Phi^Y (-)  \to \Phi^Y (- \otimes -).
          \]
        \item If $M$ has a $\GL (E)$-equivariant algebra structure, then $\Phi^X (M)$ is an algebra with product
          \[
            \Phi^X(M) \otimes \Phi^X(M) \to \Phi^X(M \otimes M) \to \Phi^X(M)
          \]
          where the first map comes from (1) and the second map is $\Phi^X$ applied to the product map on $M$.
        \item If $N$ is any $\GL (E)$-representation with a $\GL(E)$-equivariant module structure over $M$ (where $M$ is as in $(2)$), then $\Phi^Y (N)$ is a module over $\Phi^X (M)$. 
    \end{enumerate}
\end{prop}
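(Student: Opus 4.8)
The plan is to reduce everything to the fact that taking $\GL(E)$-invariants is a lax symmetric monoidal functor. Write $F \coloneq (-)^{\GL(E)}$. For any pair $P,Q$ of polynomial $\GL(E)$-representations there is a natural \emph{merge} map $\kappa_{P,Q}\colon F(P)\otimes_\bbc F(Q)\to F(P\otimes_\bbc Q)$, namely the inclusion sending a tensor of invariant vectors to the (again invariant) tensor; together with the unit isomorphism $\bbc\xrightarrow{\sim}F(\bbc)$ these satisfy the usual associativity, unit and symmetry coherences, and $\kappa_{P,Q}$ is $R$-linear whenever one of $P,Q$ carries a commuting $R$-action. (This is just the standard lax monoidal structure on a limit-type functor; nothing here is particular to $\GL(E)$.) The three parts will then be produced by feeding the algebra and module maps $m_X\colon X\otimes X\to X$, $a\colon X\otimes Y\to Y$, $m_M\colon M\otimes M\to M$, $\rho\colon M\otimes N\to N$ into $\kappa$, shuffling tensor factors with the symmetry of $\otimes_\bbc$ so as to always keep the ``operator'' factors (the $X$'s and $Y$) to the left and the ``coefficient'' factors (the $M$'s and $N$) to the right.

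For (1), I would define the morphism $\Phi^X(M)\otimes\Phi^Y(N)\to\Phi^Y(M\otimes N)$ as the composite
\begin{align*}
  (X\otimes M)^{\GL(E)}\otimes(Y\otimes N)^{\GL(E)} &\xrightarrow{\kappa}\bigl((X\otimes M)\otimes(Y\otimes N)\bigr)^{\GL(E)}\\
  &\xrightarrow{\cong}\bigl((X\otimes Y)\otimes(M\otimes N)\bigr)^{\GL(E)}\xrightarrow{(a\otimes\mathrm{id})^{\GL(E)}}(Y\otimes M\otimes N)^{\GL(E)},
\end{align*}
where the middle isomorphism is the symmetry of $\otimes_\bbc$ interchanging the factors $M$ and $Y$. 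Naturality in $M$ and $N$ is immediate from naturality of $\kappa$ and of the symmetry, and $R$-linearity follows from $R$-linearity of $a$ and of $\kappa$ in the $X$-slot. Moreover $a(rx\otimes y)=a(x\otimes ry)$, so the composite is $R$-balanced and descends to $\Phi^X(M)\otimes_R\Phi^Y(N)\to\Phi^Y(M\otimes N)$; this is the form that will be used in (3).

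For (2) and (3), the plan is to use the conceptual fact that $X\otimes M$ is an algebra object in the symmetric monoidal category of polynomial $R\times\GL(E)$-modules — with product $(x\otimes m)(x'\otimes m')=xx'\otimes mm'$ assembled from $m_X$, $m_M$ and the symmetry — and that $Y\otimes N$ is a module over it, assembled from $a$, $\rho$ and the symmetry. Applying $F$, which takes algebras to algebras and their modules to modules, then makes $\Phi^X(M)=F(X\otimes M)$ an algebra and $\Phi^Y(N)=F(Y\otimes N)$ a module over it. Unwinding $F$ of these structure maps identifies the induced product on $\Phi^X(M)$ with $\Phi^X(m_M)$ precomposed with the special case $Y=X$, $a=m_X$ of the morphism of (1), and the induced action on $\Phi^Y(N)$ with $\Phi^Y(\rho)$ precomposed with the morphism of (1) — exactly the recipes in the statement. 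All ingredients being $R$-linear, $\Phi^X(M)$ is an $R$-algebra and $\Phi^Y(N)$ a $\Phi^X(M)$-module in $R\operatorname{-Mod}$; and if $X$ and $M$ are unital, the image of $1_X\otimes 1_M$ under $\bbc=F(\bbc)\to F(X\otimes M)$ is a two-sided unit for $\Phi^X(M)$.

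The one genuinely laborious point — more bookkeeping than mathematics — is verifying the associativity (and, where relevant, unit) axioms: that the two bracketings of
\[
  \Phi^X(M)\otimes\Phi^X(M)\otimes\Phi^X(M)\longrightarrow\Phi^X(M)\qquad\text{and}\qquad\Phi^X(M)\otimes\Phi^X(M)\otimes\Phi^Y(N)\longrightarrow\Phi^Y(N)
\]
agree. Pushing every occurrence of $\kappa$ to the outside via its associativity coherence, each diagram reduces to a combination of three ground-level identities, all of which hold by hypothesis: associativity of $m_X$ and of the action $a$ (i.e.\ $Y$ really is an $X$-module), associativity of $m_M$, and the module axiom $\rho\circ(m_M\otimes\mathrm{id}_N)=\rho\circ(\mathrm{id}_M\otimes\rho)$ (i.e.\ $N$ really is an $M$-module). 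The only thing to be careful about is tracking the symmetry isomorphisms used to reorder tensor factors; with the left/right convention fixed above every reordering is forced, so each diagram commutes by the corresponding coherence for $F$ together with one of the three identities. I expect this step to be routine but tedious, presenting no obstruction of substance.
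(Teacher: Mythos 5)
Your proposal is correct and follows essentially the same route as the paper: the map in (1) is built from exactly the same composite (merge invariants, swap the middle factors via the symmetry, then apply the action $X\otimes Y\to Y$), and (2)–(3) are obtained by specializing (1) and post-composing with $m_M$ resp.\ $\rho$, with associativity/compatibility checked by the same commutative diagrams that your lax-monoidality coherence argument abstracts. The extra remarks on units and on descending to $\otimes_R$ are harmless additions not present in the paper.
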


\begin{proof}
    \textbf{Proof of (1):} Let $M$ and $N$ be any $\GL (E)$-representations. Then there is a canonical string of maps:
    \begingroup\allowdisplaybreaks
    \begin{align*}
        \Phi^X (M) \otimes \Phi^Y (N) =& (X \otimes M)^{\GL (E)} \otimes (Y \otimes N)^{\GL(E)} \\
        \to& (X \otimes M \otimes Y \otimes N)^{\GL(E)} \\
        \cong& ( (X \otimes Y) \otimes (M \otimes N) )^{\GL(E)} \\
        \to& (Y \otimes (M \otimes N))^{\GL(E)} = \Phi^Y (M \otimes N),
    \end{align*}
    \endgroup
    where the final map $( (X \otimes Y) \otimes (M \otimes N) )^{\GL(E)} \to (Y \otimes (M \otimes N))^{\GL(E)}$ is induced by taking $\GL$-invariants of the module action $(X \otimes Y \to Y) \otimes (M \otimes N)$.  
    
    \textbf{Proof of (2):} Set $Y \coloneq X$ and $N \coloneq M$ in the canonical map of $(1)$, then compose with the map obtained by taking $\GL$-invariants of the multiplication $M \otimes M \to M$. Notice that this yields an associative algebra structure by the following commutative diagram:
\[\begin{tikzcd}
	{(X \otimes M)^{\GL} \otimes (X \otimes M)^{\GL} \otimes (X \otimes M)^{\GL}} && {\left( (X \otimes X) \otimes (M \otimes M) \right)^{\GL} \otimes (X \otimes M)^{\GL}} \\
	{(X \otimes M)^{\GL} \otimes ((X \otimes X) \otimes (M \otimes M))^{\GL} } && {((X \otimes X \otimes X) \otimes (M \otimes M \otimes M))^{\GL}}
	\arrow[from=1-1, to=1-3]
	\arrow[from=1-1, to=2-1]
	\arrow[from=1-3, to=2-3]
	\arrow[from=2-1, to=2-3]
\end{tikzcd}\]

    \textbf{Proof of (3):} Compose the canonical map $\Phi^X (M) \otimes \Phi^Y (N) \to \Phi^Y (M \otimes N)$ of $(1)$ with the map $M \otimes N \to N$. This map will be compatible with the algebra structure of $\Phi^X (M)$ by the commutativity of the following diagram (analogous to that used for the proof of $(2)$):
\[\begin{tikzcd}
	{(X \otimes M)^{\GL} \otimes (X \otimes M)^{\GL} \otimes (Y \otimes N)^{\GL}} && {\left( (X \otimes X) \otimes (M \otimes M) \right)^{\GL} \otimes (Y \otimes N)^{\GL}} \\
	{(X \otimes M)^{\GL} \otimes ((X \otimes Y) \otimes (M \otimes N))^{\GL} } && {((X \otimes X \otimes Y) \otimes (M \otimes M \otimes N))^{\GL}}
	\arrow[from=1-1, to=1-3]
	\arrow[from=1-1, to=2-1]
	\arrow[from=1-3, to=2-3]
	\arrow[from=2-1, to=2-3]
      \end{tikzcd} \qedhere
    \]
\end{proof}

\begin{lemma} \label{lem:sym-transform}
  Let $F$ be a vector (super)space and pick a partition $\lambda$. Set $X = S^\bullet(E \otimes F)$, $M = S^\bullet(E^*)$, and $N = S^\bullet(E^*) \otimes \bbs_\lambda(E^*)$.

  Then there is an isomorphism of algebras $\Phi^X(M) \cong S^\bullet(F)$, and $\Phi^X(N)$ is generated as an $S^\bullet(F)$-module by $\Phi^X(\bbs_\lambda(E^*))$, where $\bbs_\lambda(E^*)$ is identified with $S^0(E^*) \otimes \bbs_\lambda(E^*)$.
\end{lemma}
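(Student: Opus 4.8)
The plan is to push everything through the Cauchy identity and then track the $S^\bullet(F)$-module structure using Pieri's rule.

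For the algebra isomorphism, first note that by Proposition~\ref{prop:monoidalPhiFunctors}(2) the product on $\Phi^X(M)=(X\otimes M)^{\GL(E)}$ is just the restriction to $\GL(E)$-invariants of the product on the (super)symmetric algebra $X\otimes M=S^\bullet\big((E\otimes F)\oplus E^*\big)$. I would then apply the first and second fundamental theorems of invariant theory for $\GL(E)$, in their $\mathbb{Z}/2$-graded form: since $(E\otimes F)\oplus E^*$ consists of $\dim F$ graded copies of the standard representation $E$ together with a single copy of $E^*$, the invariant subalgebra is generated by the $\dim F$ contraction invariants, which span the degree-$1$ subspace $\big((E\otimes F)\otimes E^*\big)^{\GL(E)}\cong F$, and these satisfy no relations (the relevant determinantal ideal vanishes, only one copy of $E^*$ being present). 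Hence $\Phi^X(M)\cong S^\bullet(F)$ as graded algebras. (The underlying graded vector space also falls straight out of Cauchy: $\Phi^X(M)=\bigoplus_\alpha\big(\bbs_\alpha(E)\otimes S^\bullet(E^*)\big)^{\GL(E)}\otimes\bbs_\alpha(F)=\bigoplus_{d\ge0}S^d(F)$.)

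For the module statement, I would observe that $X\otimes N=B\otimes_{\bbc}\bbs_\lambda(E^*)$ with $B=X\otimes M$, so $\Phi^X(N)=\Hom_{\GL(E)}\big(\bbs_\lambda(E),B\big)$ is the module of $\bbs_\lambda(E)$-covariants of $B$ over $B^{\GL(E)}=S^\bullet(F)$, and its $E^*$-degree-$0$ part is $\Hom_{\GL(E)}\big(\bbs_\lambda(E),X\big)=\Phi^X\big(\bbs_\lambda(E^*)\big)$. Cauchy together with Pieri gives $\Hom_{\GL(E)}\big(\bbs_\lambda(E),X\otimes S^l(E^*)\big)=\Hom_{\GL(E)}\big(\bbs_\lambda(E)\otimes S^l(E),X\big)=\bigoplus_\mu\bbs_\mu(F)$, the sum over $\mu$ with $\mu/\lambda$ a horizontal strip of size $l$; hence $\Phi^X(N)=\bigoplus_{\mu/\lambda\ \text{horiz. strip}}\bbs_\mu(F)$, graded by $|\mu|-|\lambda|$, with bottom piece $\Phi^X(\bbs_\lambda(E^*))=\bbs_\lambda(F)$; and by Proposition~\ref{prop:monoidalPhiFunctors}(1),(3) the $S^\bullet(F)$-action in degree $l$ is the natural map $S^l(F)\otimes\bbs_\lambda(F)\to\Phi^X(N)_l$.

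Everything then reduces to showing this structure map is surjective for every $l$. Since $S^l(F)\otimes\bbs_\lambda(F)=\bigoplus_{\mu/\lambda\ \text{horiz. strip of size }l}\bbs_\mu(F)$ by Pieri, the source and target are multiplicity-free $\GL(F)$-modules and every constituent of the target occurs in the source, so surjectivity is equivalent to the map being nonzero on each $\bbs_\mu(F)$-isotypic component. Unwinding Proposition~\ref{prop:monoidalPhiFunctors}(1), on such a component the map is, up to a nonzero scalar, the canonical Pieri map $S^l(F)\otimes\bbs_\lambda(F)\to\bbs_\mu(F)$, and the scalar is nonzero precisely because the multiplication of the algebra $S^\bullet(E\otimes F)$ does not annihilate the $\bbs_\mu(E)\otimes\bbs_\mu(F)$-component of $\big(\bbs_{(l)}(E)\otimes\bbs_{(l)}(F)\big)\cdot\big(\bbs_\lambda(E)\otimes\bbs_\lambda(F)\big)$. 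So the lemma comes down to this one non-vanishing of Pieri multiplication for horizontal strips — which I would verify by testing on a $\GL(E)$-highest-weight vector (it is also classical) — and I expect it to be the only real point, the rest being bookkeeping with the Cauchy and Pieri rules. (If $\dim E\le\ell(\lambda)$ the displayed sums are truncated or all the objects vanish, and the same argument applies.)
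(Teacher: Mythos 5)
Your argument is correct, and for the main (module‑generation) half it takes a genuinely different route from the paper. The algebra isomorphism $\Phi^X(M)\cong S^\bullet(F)$ is obtained the same way in both places (FFT/SFT: one copy of $E^*$ means the determinantal relations are vacuous). For generation of $\Phi^X(N)$, the paper introduces an auxiliary space $H$, realizes $N$ as the $\bbs_\lambda(H)$-isotypic component of the algebra $N'=S^\bullet(E^*\otimes(H\oplus\bbc))$, identifies $\Phi^X(N')$ with a determinantal ring, and then deduces generation from the freeness of $S^\bullet(F\otimes(H\oplus\bbc))$ over $S^\bullet(F)$ together with a Hilbert-series count — the point of this detour being that one never has to check that any particular equivariant multiplication map is nonzero. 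You instead compute $\Phi^X(N)_l=\bigoplus_\mu\bbs_\mu(F)$ directly via Cauchy and Pieri and reduce generation, using multiplicity-freeness, to the nonvanishing of the Cartan/Pieri multiplication $(\bbs_{(l)}(E)\otimes\bbs_{(l)}(F))\cdot(\bbs_\lambda(E)\otimes\bbs_\lambda(F))\to\bbs_\mu(E)\otimes\bbs_\mu(F)$ inside $S^\bullet(E\otimes F)$. That reduction is valid, and the nonvanishing is indeed classical (checkable on highest-weight bitableaux), but it is the one substantive input you leave unproved, and it is precisely what the paper's construction is engineered to avoid; you should also note that for superspaces $F$ the nonvanishing follows from the ordinary case by Schur's lemma applied to the induced endomorphism of the polynomial functor $\bbs_\mu$, since a nonzero scalar endomorphism stays nonzero on any $F$ with $\bbs_\mu(F)\neq 0$. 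What your approach buys in exchange is a completely explicit description of $\Phi^X(N)$ as a $\GL(F)$-module together with its structure maps, which the paper's freeness-plus-Hilbert-series argument does not directly exhibit.
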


\begin{proof}
  Let $H$ be a vector space and set $N' = S^\bullet(E^* \otimes (H \oplus \bbc))$. If $\dim H \ge \ell(\lambda)$, then as a $\GL(H)$-representation, $N$ is the $\bbs_\lambda(H)$-isotypic component of $N'$. Notice that $ N'$ is naturally a polynomial $\GL(E)$-algebra and, by definition, $\Phi^X( N')$ is the ring of invariants of $ N'$ under $\GL(E)$. In particular, $\Phi^X(N')$ is a determinantal ring, i.e., the quotient of $T \coloneq S^\bullet(F \otimes (H \oplus \bbc))$ by the ideal generated by $\bigwedge^{\dim E+1} F \otimes \bigwedge^{\dim E + 1} (H \oplus \bbc)$. 

When $H=0$, the algebra structure on $\Phi^X(M)$ is the determinantal ring above, which is just the polynomial ring $S^\bullet(F)$ since $\dim E + 1 > 1$. For general $H$, the algebra $T$ is free over the subalgebra $S^\bullet(F)$, and so the $\bbs_\lambda(H)$-isotypic component of $S^\bullet(F \otimes (H\oplus \bbc))$, being a direct summand, is also free.  For degree reasons, it must be generated by $\bbs_\lambda(F)$, which implies that $N$ is also generated by the image of $\bbs_\lambda(F)=\Phi^X(\bbs_\lambda(E^*))$ in $\Phi^X(N)$.
\end{proof}

\begin{lemma}\label{lem:PhiConvertsIntoAModule}
    Let $(V,q,V_\circ)$ be a framed orthogonal space with $\dim V = n$. Set $A \coloneq S^\bullet (V) / (q)$. If $\dim E > \ell(\lambda)$, then there is an isomorphism of $A$-modules
    \[
      \Phi^{Z_{V,E}} (S^\bullet (E^*) \otimes \bbs_\lambda (E^*) ) \cong A \otimes \bbs_\lambda^A
    \]
    where the right hand side is the free $A$-module with generators $\bbs_\lambda^A$.
\end{lemma}

\begin{proof}
  Note that it is clear that \emph{as vector spaces}, there is an isomorphism
  \[
    \Phi^{Z_{V,E}} (S^\bullet (E^*) \otimes \bbs_\lambda (E^*) ) \cong A \otimes \bbs_\lambda^A.
  \]
  So we have to show that the result of applying $\Phi^{Z_{V,E}}$ is a free $A$-module. To do this, we will first prove that the result holds when $\dim V \gg 0$, then deduce the result in general for odd-dimensional spaces. 
    
  First, consider $S^\bullet(V \otimes E)$ as a quotient of $\uU(\g_{V,E})$ by the 2-sided ideal generated by $\bigwedge^2 E$. All irreducible $\GL(E)$-subrepresentations in this ideal have highest weights indexed by partitions with at least $2$ parts, so
  \[
    \Phi^{S^\bullet(V \otimes E)}(S^\bullet (E^*)) \cong \Phi^{\uU(\g_{V,E})}(S^\bullet(E^*)).
  \]
  Now it follows from Lemma~\ref{lem:sym-transform} that $\Phi^{S^\bullet(V \otimes E)}(S^\bullet (E^*)) \cong S^\bullet(V)$.

  \medskip

    \textbf{The Large Rank Case:} We will use the notation from \S\ref{ss:geom}. Assume that $q$ has full rank and $\dim V = 2d$, where $d > \dim E$. Set $X := \igr (d, V)$ and consider the bundle 
    \[
      \eta_{V,E} = \uU (\underline{\g_{V,E}} ) \otimes_{\uU(\R \otimes E)} \OO_X \cong S^\bullet (\R^* \otimes E) \otimes S^\bullet (\bigwedge^2 E),
    \]
    where the rightmost isomorphism is only an isomorphism as $\OO_X$-modules. Recall that $\eta_{V,E}$ is viewed as a left $\uU (\underline{\g_{V,E}} )$-module. We have
    \[
      \Phi^{\eta_{V,E}} (S^\bullet (E^* \otimes (H \oplus \bbc))) \cong \bigoplus_{\lambda, \mu, \nu} \bbs_\nu (\R^*)^{\oplus c_{\nu,(2\mu)^T}^\lambda} \otimes \bbs_\lambda (H \oplus \bbc).
    \]
    Using the Pieri rule, we have that as a $\GL(H)$-representation, the $\bbs_\lambda (H)$-multiplicity space is $\bigoplus_{\lambda', \mu, \nu} \bbs_\nu(\R^*)^{\oplus c^{\lambda'}_{\nu, (2\mu)^T}}$ where $\lambda'$ ranges over all partitions containing $\lambda$ such that $\lambda'/\lambda$ is a horizontal strip. For fixed $\lambda'$, using the Borel--Weil theorem (\cite[Corollary 4.3.9]{weyman2003}) and Proposition~\ref{prop:stable-decomp}, we have
    \[
      {\rm H}^0(X, \bigoplus_{\mu, \nu} \bbs_\nu(\R^*)^{\oplus c^{\lambda'}_{\nu, (2\mu)^T}}) \cong \bbs^A_{\lambda'}.
    \]
    Hence if we sum over all $\lambda'$, the global sections of the $\bbs_\lambda(H)$-multiplicity space is isomorphic to $A \otimes \bbs_\lambda^A$ by the generalized Pieri rule (Proposition~\ref{cor:fundamentalIdentities}), so it remains to show that the $\bbs_\lambda (H)$-multiplicity space of $\Phi^{\eta_{V,E}} (S^\bullet (E^* \otimes (H \oplus \bbc)))$ is generated in minimal degree as an $S^\bullet(V) \otimes \OO_X$-module. 

    Tensoring with $S^\bullet (E^* \otimes (H \oplus \bbc))$ and taking $\GL (E)$-invariants is a monoidal transformation that replaces any copy of $E$ with a copy of $H \oplus \bbc$, and thus 
    \[
      \Phi^{\uU (\underline{\g_{V,E}})} (S^\bullet (E^* \otimes (H \oplus \bbc)) \to \Phi^{\eta_{V,E}} (S^\bullet (E^* \otimes (H \oplus \bbc)))
    \]
    being surjective implies that
    \[
      \uU (\underline{\g_{V , H \oplus \bbc}}) \to \eta_{V , H \oplus \bbc}
    \]
    is surjective.
    
    The enveloping algebra $\uU (\underline{\g_{V,H \oplus \bbc}})$ is generated in degree $1$ and thus so is the quotient $\eta_{V, H \oplus \bbc}$ (as a left $\uU (\underline{\g_{V,H \oplus \bbc}})$-module). This implies that all $\bbs_\lambda (H)$-multiplicity spaces are also generated in minimal degree, and thus upon taking global sections the $S^\bullet (V)$-module $\Phi^{Z_{V,E}} (S^\bullet (E^*) \otimes \bbs_\lambda (E^*)) \cong A \otimes \bbs_\lambda^A$ must be generated in degree $|\lambda|$.  
    
    Finally, by examining the Hilbert function, the $S^\bullet (V)$-module $A \otimes \bbs_\lambda^A$ is annihilated by some quadric; by equivariance this quadric must be $q$, whence $\Phi^{Z_{V,E}} (S^\bullet (E^*) \otimes \bbs_\lambda (E^*)) \cong A \otimes \bbs_\lambda^A$ is a free $A$-module.

    \medskip

    \textbf{General Odd Case:} Assume now that $\dim V$ is odd, $\dim E$ is arbitrary, and $q$ is full rank. Let $A = S^\bullet (V) / (q)$ and note that there exists a graded surjection $A' \twoheadrightarrow A$ from $A' := S^\bullet (V') / (q')$ where $\dim V' = 2d$ is even, $d> \dim E$, and $q'$ is a full rank quadric. This surjection induces a natural graded surjection $Z_{V',E} \twoheadrightarrow Z_{V,E}$ of left $\uU (\g_{V',E})$-modules (where $Z_{V,E}$ is viewed as a left $\uU( \g_{V',E})$-module via the surjection $\uU( \g_{V',E} )\twoheadrightarrow\uU( \g_{V,E} )$), and since taking $\GL$-invariants is an exact functor there is an induced surjection of $S^\bullet (V')$-modules
    $$A' \otimes \bbs_\lambda^{A'} \twoheadrightarrow \Phi^{Z_{V,E}} (S^\bullet (E^*) \otimes \bbs_\lambda (E^*)),$$
    where the $S^\bullet (V')$-module structure on the target is induced by the surjection $S^\bullet (V') \twoheadrightarrow S^\bullet (V)$. Since $\Phi^{Z_{V,E}} (S^\bullet (E^*) \otimes \bbs_\lambda (E^*))$ is a quotient of a module generated in the minimal possible degree, it itself must be generated in minimal degree and thus $\Phi^{Z_{V,E}} (S^\bullet (E^*) \otimes \bbs_\lambda (E^*))$ is isomorphic to the free $A$-module $A \otimes \bbs_\lambda^A$. 
    
    Finally, to handle the case of an arbitrary framed orthogonal space $(V, q, V_\circ)$ equipped with an odd-rank quadric, we simply note that the framed structure induces an isomorphism
    $$A \coloneqq S^\bullet (V) / (q) \cong S^\bullet (V_\circ)/(q) \otimes S^\bullet (V / V_\circ),$$
    which in turn gives an isomorphism of functors
    \[
      \Phi^{Z_{V,E}} \cong \Phi^{Z_{V_\circ,E}} \otimes \Phi^{S^\bullet (V/V_\circ \otimes E)},
    \]
    and thus the result follows immediately from the full rank quadric case.
\end{proof}

\begin{remark}\label{rk:noMonoidalStructure}
  To explain why we had to use this indirect proof, we show that there is {\bf no monoidal linear functor} from $\GL(E)$-representations to ${\rm O}(V)$-representations (we will assume $q$ has full rank) that sends $\bbs_\lambda(E)$ to $\bbs_\lambda^A$. (We thank Andrew Snowden for this observation and for letting us include it.) Suppose that such a functor $\Psi$ exists, let
  \[
    \sigma \colon E \otimes E \to E \otimes E, \qquad \sigma(e \otimes e') = e' \otimes e
  \]
  be the swapping map, and let $\tau = \Psi(\sigma)$. By definition, $\bigwedge^2 E$ is the $-1$ eigenspace of $\sigma$ and $S^2 E$ is the $+1$-eigenspace of $\sigma$. Then $\bigwedge^2 V \oplus {\rm span}(q)$ is the $-1$ eigenspace of $\tau$ and its $+1$-eigenspace is $A_2$, thought of as the ${\rm O}(V)$-complement of ${\rm span}(q)$ in $S^2(V)$. This determines a formula for $\tau$.

  Next, let $\sigma_1 \colon E^{\otimes 3} \to E^{\otimes 3}$ denote $\sigma \otimes 1$ and let $\sigma_2 = 1 \otimes \sigma$. Then they satisfy the braid relation: $\sigma_1 \sigma_2 \sigma_1 = \sigma_2 \sigma_1 \sigma_2$. Since they are defined using the monoidal structure, we see that $\tau$ must satisfy a similar braid relation $\tau_1 \tau_2 \tau_1 = \tau_2 \tau_1 \tau_2$. But this identity fails.

  For simplicity, we illustrate this when $\dim V = 2$; let $q = x^2 + y^2$. Then the $-1$-eigenspace of $\tau$ is ${\rm span}(x \otimes y - y \otimes x, x\otimes x + y \otimes y)$ while the $+1$-eigenspace is ${\rm span}(x \otimes x - y \otimes y, x\otimes y + y \otimes x)$. Hence $\tau(x \otimes x) = - y \otimes y$ and $\tau(x \otimes y) = y \otimes x$ (the rest is determined by $\tau^2=1$). In particular,
  \[
    \tau_1\tau_2\tau_1(x \otimes x \otimes x) = -x \otimes y \otimes y, \qquad \tau_2 \tau_1 \tau_2(x \otimes x \otimes x) = - y \otimes y \otimes x. \qedhere
  \]
\end{remark}

\begin{remark}
  One of the obstacles to our construction is that $Z_{V,E}$ does not have any $\GL(E)$-equivariant algebra structure in general. As discussed in Remark~\ref{rmk:comm-degen}, $Z_{V,E}$ does have a flat degeneration to a commutative algebra $Z'$, namely a certain quotient of $S^\bullet(\g_{V,E})$. However, $Z'$ is not generated in degree 1 in general, so that the functor $\Phi^{Z'}$ will not behave well with respect to free modules as above. Thus the Lie structure on these objects is essential for the functor $\Phi^{Z_{V,E}}$ to behave as expected.
\end{remark}

\section{Pure free resolutions over quadric hypersurface rings} \label{sec:purefree}

In this section, we illustrate the relationship between (equivariant) total positivity and the construction of pure free resolutions. In particular, we show that if one modifies the definition of a Schur module to be ``with respect to" the underlying algebra, identical constructions to that of Eisenbud--Fl\o ystad--Weyman \cite{eisenbud2011existence} may be used to construct pure free resolutions over quadric hypersurfaces and (coordinate rings of) rational normal curves. 

\subsection{EFW complexes} \label{sec:EFW}

In this section, we briefly recall the EFW complexes constructed in \cite[Theorem 0.1]{eisenbud2011existence}. Let $E$ be a vector superspace and set $S = S^\bullet(E)$.

\begin{theorem}[{\cite{eisenbud2011existence}}]\label{thm:originalPieriResolutions}
  Given any sequence of positive integers $e_1 , e_2,\dots$ such that $e_i = 1$ for all $i \gg 0$, set $d_i = e_1+\cdots+e_i$. There exists a $\GL(E)$-equivariant resolution of the form
    \[
      \cdots \to S(-d_n) \otimes  \bbs^{\lambda^{(n)}} \to S(-d_{n-1}) \otimes  \bbs_{\lambda^{(n-1)}}(E) \to \cdots \to S \otimes  \bbs_{\lambda^{(0)}}(E).
    \]
  \end{theorem}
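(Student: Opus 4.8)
The plan is to recall the explicit construction of \cite[Theorem 0.1]{eisenbud2011existence}, which we will reuse later, and to indicate why it yields a resolution.

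\textbf{The terms and differentials.} Fix a partition $\lambda^{(0)}$ (taken large enough that the eventual cokernel has finite length), shift so that $d_0 = 0$, and define $\lambda^{(i)}$ inductively by adjoining to $\lambda^{(i-1)}$ a horizontal strip of size $e_i$ in the position prescribed in \cite{eisenbud2011existence}; once $e_i = 1$ one simply appends a new length-one row, so $\ell(\lambda^{(i)}) \to \infty$ and, for any fixed finite-dimensional $E$, the complex terminates after finitely many steps (the terms $\bbs_{\lambda^{(i)}}(E)$ vanish once $\ell(\lambda^{(i)}) > \dim E$). Because $\lambda^{(i)}/\lambda^{(i-1)}$ is a horizontal strip of size $e_i$, Pieri's rule supplies a one-dimensional space of $\GL(E)$-equivariant maps $\bbs_{\lambda^{(i)}}(E) \to S^{e_i}(E) \otimes \bbs_{\lambda^{(i-1)}}(E)$; composing with the multiplication $S^{e_i}(E)\otimes S \to S$ and twisting gives the differential $S(-d_i)\otimes \bbs_{\lambda^{(i)}}(E) \to S(-d_{i-1})\otimes \bbs_{\lambda^{(i-1)}}(E)$. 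These formulas are insensitive to a $\bbz/2$-grading on $E$, since the Schur functors, the Pieri rule, and $S = S^\bullet(E)$ all have characteristic-free (and super) definitions. That two consecutive differentials compose to zero is checked by noting their composite factors through a $\GL(E)$-equivariant map $\bbs_{\lambda^{(i+1)}}(E) \to S^{e_i + e_{i+1}}(E)\otimes \bbs_{\lambda^{(i-1)}}(E)$ and that, for these particular shapes, the Littlewood--Richardson rule shows $\bbs_{\lambda^{(i+1)}}(E)$ either does not occur in the target or occurs only in a component annihilated by the symmetrization $S^{e_i}(E)\otimes S^{e_{i+1}}(E)\to S^{e_i+e_{i+1}}(E)$; then Schur's lemma finishes it.

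\textbf{Acyclicity and finite length.} This is the substantive point, and it is \cite[Theorem 0.1]{eisenbud2011existence}. The approach I would take is the Kempf--Lascoux--Weyman geometric technique: realize the cokernel $M$ as the pushforward of a line bundle along the projection from an appropriate partial flag bundle over affine space with coordinate ring $S^\bullet(E)$, with fibrewise data governed by the strips $\lambda^{(i)}/\lambda^{(i-1)}$, so that the displayed complex is exactly the pushforward of a Koszul complex on the total space; Bott's theorem on the cohomology of homogeneous bundles then forces the relevant higher cohomology to vanish, giving acyclicity and identifying the $i$-th term with $S(-d_i)\otimes \bbs_{\lambda^{(i)}}(E)$. (EFW's original argument instead proceeds by an induction on the length via iterated mapping cones, reducing acyclicity to injectivity of the Olver--Pieri maps; either route works.) Finite length of $M$ then follows because, after inverting any single linear form, each Pieri map above becomes a split injection onto a direct summand, so the localized complex is split exact and $M$ localizes to zero away from the irrelevant ideal; combined with acyclicity (a free resolution of length $\dim E = \dim S$), $M$ is Cohen--Macaulay of codimension $\dim E$.

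\textbf{Main obstacle.} The only genuinely hard step is acyclicity; the construction of the $\lambda^{(i)}$ and the verification that we have a complex are bookkeeping with Pieri maps. It is precisely this step where, in the quadric analogue (Theorem~\ref{thm:quadricPieriResolutions}), the Jacobi--Trudi structure on $A = S^\bullet(V)/(q)$ and Zelevinsky's functor will have to take over the role played here by the geometry of the polynomial ring.
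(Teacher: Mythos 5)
The paper does not prove this statement: it is quoted verbatim from \cite{eisenbud2011existence} (Theorem 0.1 there), so there is no internal argument to compare yours against. Your sketch is a faithful outline of what the cited source does: the terms and Pieri-map differentials, the vanishing of composites because $\lambda^{(i+1)}/\lambda^{(i-1)}$ is a border strip rather than a horizontal strip, and the reduction of everything to the acyclicity statement. One caveat on your claim that ``either route works'' for acyclicity: the Kempf--Lascoux--Weyman/Bott construction is EFW's \emph{second} construction, and it produces a pure resolution with the correct degree sequence that is equivariant only for a Levi subgroup $\GL(E_1)\times\cdots\times\GL(E_k)$; it is not literally the complex with terms $S(-d_i)\otimes\bbs_{\lambda^{(i)}}(E)$. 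Since the present paper applies the exact functor $\Phi^{Z_{V,E}}$ to this specific $\GL(E)$-equivariant complex in Theorem~\ref{thm:quadricPieriResolutions}, only EFW's first construction is usable here; its acyclicity is proved by showing the higher homology has finite length (via functoriality in $E$ and localization, much as in your last paragraph) and then invoking the Peskine--Szpiro acyclicity lemma for a length-$\dim E$ free complex, rather than by iterated mapping cones. With that ordering corrected (finite-length homology first, acyclicity second), your outline matches the cited proof.
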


  These resolutions are functorial for linear maps $E \to E'$ and so it is also possible to phrase this as a resolution of polynomial functors (and thereby bypassing the initial need to fix a space $E$).
Since the space $E$ is arbitrary, we can replace it by its dual $E^*$. This is how we will use it below.

A graphical way to get the partitions $\lambda^{(0)}, \lambda^{(1)} , \dots , \lambda^{(n)} , \dots $ is to draw the (infinite) border strip with row lengths given by the shifts $e_1, e_2 , \dots$ (reading from top to bottom). This region above the border strip is the Young diagram for a partition $\lambda$ and we set $\lambda^{(0)} \coloneq \lambda$. The subsequent partitions are obtained by adding the rows of the above border strip to $\lambda$ starting from top to bottom. The following example illustrates this (truncating everything after row 4):
    \tikzset{every picture/.style={line width=0.75pt}} 
\begin{center}
\begin{tikzpicture}[x=0.75pt,y=0.75pt, yscale=-.75, xscale=.75]

\draw   (79,164) -- (94,164) -- (94,179) -- (79,179) -- cycle ;
\draw   (94,164) -- (109,164) -- (109,179) -- (94,179) -- cycle ;
\draw   (109,164) -- (124,164) -- (124,179) -- (109,179) -- cycle ;
\draw   (109,149) -- (124,149) -- (124,164) -- (109,164) -- cycle ;
\draw   (124,149) -- (139,149) -- (139,164) -- (124,164) -- cycle ;
\draw   (124,134) -- (139,134) -- (139,149) -- (124,149) -- cycle ;
\draw   (124,119) -- (139,119) -- (139,134) -- (124,134) -- cycle ;
\draw   (139,119) -- (154,119) -- (154,134) -- (139,134) -- cycle ;
\draw   (254,163) -- (269,163) -- (269,178) -- (254,178) -- cycle ;
\draw   (269,163) -- (284,163) -- (284,178) -- (269,178) -- cycle ;
\draw   (284,163) -- (299,163) -- (299,178) -- (284,178) -- cycle ;
\draw   (284,148) -- (299,148) -- (299,163) -- (284,163) -- cycle ;
\draw   (299,148) -- (314,148) -- (314,163) -- (299,163) -- cycle ;
\draw   (299,133) -- (314,133) -- (314,148) -- (299,148) -- cycle ;
\draw   (299,118) -- (314,118) -- (314,133) -- (299,133) -- cycle ;
\draw   (314,118) -- (329,118) -- (329,133) -- (314,133) -- cycle ;
\draw    (173,150) -- (228,150.96) ;
\draw [shift={(230,151)}, rotate = 181.01] [color={rgb, 255:red, 0; green, 0; blue, 0 }  ][line width=0.75]    (10.93,-3.29) .. controls (6.95,-1.4) and (3.31,-0.3) .. (0,0) .. controls (3.31,0.3) and (6.95,1.4) .. (10.93,3.29)   ;
\draw  [color={rgb, 255:red, 208; green, 2; blue, 27 }  ,draw opacity=1 ] (284,118) -- (299,118) -- (299,133) -- (284,133) -- cycle ;
\draw  [color={rgb, 255:red, 208; green, 2; blue, 27 }  ,draw opacity=1 ] (269,118) -- (284,118) -- (284,133) -- (269,133) -- cycle ;
\draw  [color={rgb, 255:red, 208; green, 2; blue, 27 }  ,draw opacity=1 ] (254,118) -- (269,118) -- (269,133) -- (254,133) -- cycle ;
\draw  [color={rgb, 255:red, 208; green, 2; blue, 27 }  ,draw opacity=1 ] (284,133) -- (299,133) -- (299,148) -- (284,148) -- cycle ;
\draw  [color={rgb, 255:red, 208; green, 2; blue, 27 }  ,draw opacity=1 ] (269,133) -- (284,133) -- (284,148) -- (269,148) -- cycle ;
\draw  [color={rgb, 255:red, 208; green, 2; blue, 27 }  ,draw opacity=1 ] (254,133) -- (269,133) -- (269,148) -- (254,148) -- cycle ;
\draw  [color={rgb, 255:red, 208; green, 2; blue, 27 }  ,draw opacity=1 ] (269,148) -- (284,148) -- (284,163) -- (269,163) -- cycle ;
\draw  [color={rgb, 255:red, 208; green, 2; blue, 27 }  ,draw opacity=1 ] (254,148) -- (269,148) -- (269,163) -- (254,163) -- cycle ;
\draw  [color={rgb, 255:red, 208; green, 2; blue, 27 }  ,draw opacity=1 ] (456,91) -- (471,91) -- (471,106) -- (456,106) -- cycle ;
\draw  [color={rgb, 255:red, 208; green, 2; blue, 27 }  ,draw opacity=1 ] (441,91) -- (456,91) -- (456,106) -- (441,106) -- cycle ;
\draw  [color={rgb, 255:red, 208; green, 2; blue, 27 }  ,draw opacity=1 ] (426,91) -- (441,91) -- (441,106) -- (426,106) -- cycle ;
\draw  [color={rgb, 255:red, 208; green, 2; blue, 27 }  ,draw opacity=1 ] (456,106) -- (471,106) -- (471,121) -- (456,121) -- cycle ;
\draw  [color={rgb, 255:red, 208; green, 2; blue, 27 }  ,draw opacity=1 ] (441,106) -- (456,106) -- (456,121) -- (441,121) -- cycle ;
\draw  [color={rgb, 255:red, 208; green, 2; blue, 27 }  ,draw opacity=1 ] (426,106) -- (441,106) -- (441,121) -- (426,121) -- cycle ;
\draw  [color={rgb, 255:red, 208; green, 2; blue, 27 }  ,draw opacity=1 ] (441,121) -- (456,121) -- (456,136) -- (441,136) -- cycle ;
\draw  [color={rgb, 255:red, 208; green, 2; blue, 27 }  ,draw opacity=1 ] (426,121) -- (441,121) -- (441,136) -- (426,136) -- cycle ;
\draw  [color={rgb, 255:red, 208; green, 2; blue, 27 }  ,draw opacity=1 ] (455,158) -- (470,158) -- (470,173) -- (455,173) -- cycle ;
\draw  [color={rgb, 255:red, 208; green, 2; blue, 27 }  ,draw opacity=1 ] (440,158) -- (455,158) -- (455,173) -- (440,173) -- cycle ;
\draw  [color={rgb, 255:red, 208; green, 2; blue, 27 }  ,draw opacity=1 ] (425,158) -- (440,158) -- (440,173) -- (425,173) -- cycle ;
\draw  [color={rgb, 255:red, 208; green, 2; blue, 27 }  ,draw opacity=1 ] (455,173) -- (470,173) -- (470,188) -- (455,188) -- cycle ;
\draw  [color={rgb, 255:red, 208; green, 2; blue, 27 }  ,draw opacity=1 ] (440,173) -- (455,173) -- (455,188) -- (440,188) -- cycle ;
\draw  [color={rgb, 255:red, 208; green, 2; blue, 27 }  ,draw opacity=1 ] (425,173) -- (440,173) -- (440,188) -- (425,188) -- cycle ;
\draw  [color={rgb, 255:red, 208; green, 2; blue, 27 }  ,draw opacity=1 ] (440,188) -- (455,188) -- (455,203) -- (440,203) -- cycle ;
\draw  [color={rgb, 255:red, 208; green, 2; blue, 27 }  ,draw opacity=1 ] (425,188) -- (440,188) -- (440,203) -- (425,203) -- cycle ;
\draw  [color={rgb, 255:red, 208; green, 2; blue, 27 }  ,draw opacity=1 ] (457,222) -- (472,222) -- (472,237) -- (457,237) -- cycle ;
\draw  [color={rgb, 255:red, 208; green, 2; blue, 27 }  ,draw opacity=1 ] (442,222) -- (457,222) -- (457,237) -- (442,237) -- cycle ;
\draw  [color={rgb, 255:red, 208; green, 2; blue, 27 }  ,draw opacity=1 ] (427,222) -- (442,222) -- (442,237) -- (427,237) -- cycle ;
\draw  [color={rgb, 255:red, 208; green, 2; blue, 27 }  ,draw opacity=1 ] (457,237) -- (472,237) -- (472,252) -- (457,252) -- cycle ;
\draw  [color={rgb, 255:red, 208; green, 2; blue, 27 }  ,draw opacity=1 ] (442,237) -- (457,237) -- (457,252) -- (442,252) -- cycle ;
\draw  [color={rgb, 255:red, 208; green, 2; blue, 27 }  ,draw opacity=1 ] (427,237) -- (442,237) -- (442,252) -- (427,252) -- cycle ;
\draw  [color={rgb, 255:red, 208; green, 2; blue, 27 }  ,draw opacity=1 ] (442,252) -- (457,252) -- (457,267) -- (442,267) -- cycle ;
\draw  [color={rgb, 255:red, 208; green, 2; blue, 27 }  ,draw opacity=1 ] (427,252) -- (442,252) -- (442,267) -- (427,267) -- cycle ;
\draw  [color={rgb, 255:red, 208; green, 2; blue, 27 }  ,draw opacity=1 ] (454,294) -- (469,294) -- (469,309) -- (454,309) -- cycle ;
\draw  [color={rgb, 255:red, 208; green, 2; blue, 27 }  ,draw opacity=1 ] (439,294) -- (454,294) -- (454,309) -- (439,309) -- cycle ;
\draw  [color={rgb, 255:red, 208; green, 2; blue, 27 }  ,draw opacity=1 ] (424,294) -- (439,294) -- (439,309) -- (424,309) -- cycle ;
\draw  [color={rgb, 255:red, 208; green, 2; blue, 27 }  ,draw opacity=1 ] (454,309) -- (469,309) -- (469,324) -- (454,324) -- cycle ;
\draw  [color={rgb, 255:red, 208; green, 2; blue, 27 }  ,draw opacity=1 ] (439,309) -- (454,309) -- (454,324) -- (439,324) -- cycle ;
\draw  [color={rgb, 255:red, 208; green, 2; blue, 27 }  ,draw opacity=1 ] (424,309) -- (439,309) -- (439,324) -- (424,324) -- cycle ;
\draw  [color={rgb, 255:red, 208; green, 2; blue, 27 }  ,draw opacity=1 ] (439,324) -- (454,324) -- (454,339) -- (439,339) -- cycle ;
\draw  [color={rgb, 255:red, 208; green, 2; blue, 27 }  ,draw opacity=1 ] (424,324) -- (439,324) -- (439,339) -- (424,339) -- cycle ;
\draw   (471,91) -- (486,91) -- (486,106) -- (471,106) -- cycle ;
\draw   (486,91) -- (501,91) -- (501,106) -- (486,106) -- cycle ;
\draw   (470,158) -- (485,158) -- (485,173) -- (470,173) -- cycle ;
\draw   (485,158) -- (500,158) -- (500,173) -- (485,173) -- cycle ;
\draw   (470,173) -- (485,173) -- (485,188) -- (470,188) -- cycle ;
\draw   (472,222) -- (487,222) -- (487,237) -- (472,237) -- cycle ;
\draw   (472,237) -- (487,237) -- (487,252) -- (472,252) -- cycle ;
\draw   (472,252) -- (487,252) -- (487,267) -- (472,267) -- cycle ;
\draw   (457,252) -- (472,252) -- (472,267) -- (457,267) -- cycle ;
\draw   (469,294) -- (484,294) -- (484,309) -- (469,309) -- cycle ;
\draw   (487,222) -- (502,222) -- (502,237) -- (487,237) -- cycle ;
\draw   (484,294) -- (499,294) -- (499,309) -- (484,309) -- cycle ;
\draw   (469,309) -- (484,309) -- (484,324) -- (469,324) -- cycle ;
\draw   (469,324) -- (484,324) -- (484,339) -- (469,339) -- cycle ;
\draw   (454,324) -- (469,324) -- (469,339) -- (454,339) -- cycle ;
\draw   (454,339) -- (469,339) -- (469,354) -- (454,354) -- cycle ;
\draw   (439,339) -- (454,339) -- (454,354) -- (439,354) -- cycle ;
\draw   (424,339) -- (439,339) -- (439,354) -- (424,354) -- cycle ;
\draw  [color={rgb, 255:red, 208; green, 2; blue, 27 }  ,draw opacity=1 ] (456,23) -- (471,23) -- (471,38) -- (456,38) -- cycle ;
\draw  [color={rgb, 255:red, 208; green, 2; blue, 27 }  ,draw opacity=1 ] (441,23) -- (456,23) -- (456,38) -- (441,38) -- cycle ;
\draw  [color={rgb, 255:red, 208; green, 2; blue, 27 }  ,draw opacity=1 ] (426,23) -- (441,23) -- (441,38) -- (426,38) -- cycle ;
\draw  [color={rgb, 255:red, 208; green, 2; blue, 27 }  ,draw opacity=1 ] (456,38) -- (471,38) -- (471,53) -- (456,53) -- cycle ;
\draw  [color={rgb, 255:red, 208; green, 2; blue, 27 }  ,draw opacity=1 ] (441,38) -- (456,38) -- (456,53) -- (441,53) -- cycle ;
\draw  [color={rgb, 255:red, 208; green, 2; blue, 27 }  ,draw opacity=1 ] (426,38) -- (441,38) -- (441,53) -- (426,53) -- cycle ;
\draw  [color={rgb, 255:red, 208; green, 2; blue, 27 }  ,draw opacity=1 ] (441,53) -- (456,53) -- (456,68) -- (441,68) -- cycle ;
\draw  [color={rgb, 255:red, 208; green, 2; blue, 27 }  ,draw opacity=1 ] (426,53) -- (441,53) -- (441,68) -- (426,68) -- cycle ;

\draw (364,28.4) node [anchor=north west][inner sep=0.75pt]    {$\lambda ^{( 0)} =$};
\draw (363,91.4) node [anchor=north west][inner sep=0.75pt]    {$\lambda ^{( 1)} =$};
\draw (365,165.4) node [anchor=north west][inner sep=0.75pt]    {$\lambda ^{( 2)} =$};
\draw (365,228.4) node [anchor=north west][inner sep=0.75pt]    {$\lambda ^{( 3)} =$};
\draw (104,44.4) node [anchor=north west][inner sep=0.75pt]    {$\text{Degree shifts } ( 2,1,2,3,\dots)$};
\draw (364,323.4) node [anchor=north west][inner sep=0.75pt]    {$\lambda ^{( 4)} =$};
\end{tikzpicture}
\end{center}

\begin{remark} \label{rmk:EFW-combinatorics}
  Suppose $E$ is a vector space with $\dim E = n$. If $e_n > 1$, then according to the pattern above, $\lambda_{n+1} >0$, so that $\bbs_{\lambda^{(i)}}(E)=0$ for all $i$, and the EFW complex is 0. Hence to get nonzero complexes, one needs to choose $\dim E$ large enough so that $e_i=1$ for all $i \ge \dim E$.
  Furthermore, with such a choice of $\dim E$, we see that $\dim E > \ell(\lambda^{(i)})$ whenever $i < \dim E$.
\end{remark}

\subsection{EFW complexes over odd quadric hypersurface rings}

In this subsection, we show that a construction nearly identical to that of Eisenbud--Fl\o ystad--Weyman \cite{eisenbud2011existence} can be used to construct equivariant pure free resolutions over the odd quadric hypersurface ring, where one replaces classical Schur functors with the quadric Schur functors. 

\begin{theorem}\label{thm:quadricPieriResolutions}
  Let $(V,q,V_\circ)$ be a framed orthogonal space equipped with an odd-rank quadric with $\dim V = n$, and let $G = {\rm O}(V,q,V_\circ)$.
  Let $A = S^\bullet (V) / (q)$ be the associated quadric hypersurface. Pick a sequence of positive integers $e_1 , \dots , e_{n}$ and set $d_i = \sum_{j=1}^i e_j$.
    \begin{enumerate}
        \item If $e_{n} > 1$, there is a finite length $G$-equivariant pure free resolution 
          \[
            0 \to A (-d_{n-1}) \otimes \bbs_{\lambda^{(n-1)}}^A \to  \cdots \to A (-d_{1}) \otimes \bbs_{\lambda^{(1)}}^A \to A \otimes \bbs_{\lambda^{(0)}}^A
          \]
          with degree sequence $0,d_1 , d_2 , \dots , d_{n-1}$. 

      \item If $e_{n} = 1$, then there is an infinite length $G$-equivariant pure free resolution
          \begin{align*}
            \cdots \to A (-d_{n-1}-2) \otimes \bbs_{(\lambda^{(n-1)},1,1)}^A \to A (-d_{n-1}-1) \otimes \bbs_{(\lambda^{(n-1)},1)}^A \to \\
            A (-d_{n-1}) \otimes \bbs_{\lambda^{(n-1)}}^A
        \to A (-d_{n-2}) \otimes \bbs_{\lambda^{(n-2)}}^A \to \cdots \to A (-d_{1}) \otimes \bbs_{\lambda^{(1)}}^A \to A \otimes \bbs_{\lambda^{(0)}}^A
          \end{align*}
          with infinite degree sequence $0,d_1 , \dots , d_{n-1} , d_{n-1}+1 ,d_{n-1}+2 , \dots$.
        \end{enumerate}
              In both cases, the module being resolved has finite length.
\end{theorem}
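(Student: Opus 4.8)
The plan is to produce both resolutions by applying the functor $\Phi^{Z_{V,E}}$ of Definition~\ref{def:thePhiFunctors} to the EFW complexes of Theorem~\ref{thm:originalPieriResolutions} over the polynomial ring $S^\bullet(E^*)$, for $\dim E$ large. First extend the given sequence $e_1,\dots,e_n$ to an infinite sequence by declaring $e_i=1$ for all $i>n$, and keep $d_i=\sum_{j\le i}e_j$; in Case (1) the last index with $e_i>1$ is then $n$, and in Case (2) one has $e_i=1$ for all $i\ge n$. For each $N\gg 0$ (so that $e_i=1$ for $i\ge N$, as demanded by Remark~\ref{rmk:EFW-combinatorics}), Theorem~\ref{thm:originalPieriResolutions} gives a $\GL(E)$-equivariant graded free resolution over $S^\bullet(E^*)$, with $\dim E=N$, whose $i$th term is $S^\bullet(E^*)(-d_i)\otimes\bbs_{\lambda^{(i)}}(E^*)$ ($\lambda^{(i)}$ the partitions cut out by the border strip of the $e_j$), and whose length is $N$; the module $M_E$ it resolves has finite length.

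Next I would apply $\Phi^{Z_{V,E}}$ termwise. Since $\GL(E)$ is reductive and we work in characteristic $0$ with polynomial representations, $(-)^{\GL(E)}$ is exact and $Z_{V,E}\otimes_\bbc(-)$ is exact, so $\Phi^{Z_{V,E}}$ is exact and carries the EFW resolution to an exact complex. By Proposition~\ref{prop:monoidalPhiFunctors}, using $\Phi^{\uU(\g_{V,E})}(S^\bullet(E^*))\cong S^\bullet(V)$ from the proof of Proposition~\ref{lem:PhiConvertsIntoAModule} together with the naturality of the bifunctorial map there applied to the $S^\bullet(E^*)$-linear EFW differentials, this is a complex of graded $S^\bullet(V)$-modules; each term is annihilated by $q$, so it is a complex of graded $A$-modules with $A$-linear differentials. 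For every $i$ with $\dim E>\ell(\lambda^{(i)})$ --- by Remark~\ref{rmk:EFW-combinatorics} this includes all $i<\dim E$ --- Proposition~\ref{lem:PhiConvertsIntoAModule} identifies the $i$th term with the free module $A(-d_i)\otimes\bbs_{\lambda^{(i)}}^A$. For the single remaining boundary term, where $\ell(\lambda^{(i)})=\dim E$, a direct computation from the generalized Cauchy identity (Proposition~\ref{lem:JTcauchyIdentity}) and Pieri's rule (Proposition~\ref{cor:fundamentalIdentities}(1)) shows that $\Phi^{Z_{V,E}}$ sends it to $\bigoplus_\mu\bbs_\mu^A$, summed over $\mu\supseteq\lambda^{(i)}$ with $\mu/\lambda^{(i)}$ a horizontal strip and $\ell(\mu)\le\dim E$.

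The shape of the resolution is now governed by the vanishing criterion $\bbs_\lambda^A=0\iff\lambda_n\ge 2$ (Proposition~\ref{prop:quadric-schurdim}, $n=\dim V$) applied to the border-strip combinatorics. The relevant facts, obtained by inspecting the border strip (normalized so that it eventually runs down the first column), are: $\ell(\lambda^{(0)})$ is one less than the last index with $e_i>1$, and $\ell(\lambda^{(i)})=\max(\ell(\lambda^{(0)}),i)$; moreover in Case (1), $\lambda^{(n)}_n\ge e_n\ge 2$, while in Case (2), $\lambda^{(i)}_n\le 1$ for all $i$ and $\lambda^{(n+k)}=(\lambda^{(n-1)},1^{k+1})$ with $d_{n+k}=d_{n-1}+k+1$. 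In Case (1), taking $\dim E\gg n$, this shows $\bbs_{\lambda^{(i)}}^A=0$ exactly for $i\ge n$ (for the boundary term, every $\mu\supseteq\lambda^{(i)}$ also has $\mu_n\ge 2$, so it too vanishes), leaving the exact complex $0\to A(-d_{n-1})\otimes\bbs_{\lambda^{(n-1)}}^A\to\cdots\to A\otimes\bbs_{\lambda^{(0)}}^A\to M\to 0$ of length $n-1$. In Case (2) no term vanishes, and since the terms and differentials in any fixed range stabilize as $\dim E\to\infty$ (and exactness at each fixed spot holds once $\dim E$ is large), the complexes assemble into the stated infinite resolution. In both cases the differentials strictly raise internal degree, so the resolution is minimal, hence pure with the asserted degree sequence; and $M=\Phi^{Z_{V,E}}(M_E)$ is finite-dimensional because $M_E$ is finite-dimensional and $Z_{V,E}\cong\bigoplus_\mu\bbs_\mu^A\otimes\bbs_\mu(E)$ (the generalized Cauchy identity) has finite-dimensional multiplicity spaces $\bbs_\mu^A$, so $M$ has finite length. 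Equivariance for $G={\rm O}(V,q,V_\circ)$ is automatic, as $G$ acts on $A$ and on each $\bbs_\lambda^A$ commuting with the $\GL(E)$-action used in $\Phi^{Z_{V,E}}$ (Corollary~\ref{cor:tensorGradedPieces}).

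The main obstacle is the combinatorial bookkeeping of the third paragraph: pinning down the behavior of the border-strip partitions $\lambda^{(i)}$ with respect to their $n$th row precisely enough that the passage from the polynomial-ring resolution to the $A$-resolution truncates (Case (1)) or continues (Case (2)) in exactly the right place, and organizing the $\dim E\to\infty$ stabilization in Case (2); checking that $\Phi^{Z_{V,E}}$ really produces $A$-linear differentials via Proposition~\ref{prop:monoidalPhiFunctors} is a smaller but necessary point.
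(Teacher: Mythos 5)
Your proposal is correct and follows essentially the same route as the paper: apply the exact functor $\Phi^{Z_{V,E}}$ to the EFW complexes, identify the terms as free $A$-modules via Proposition~\ref{lem:PhiConvertsIntoAModule} and Remark~\ref{rmk:EFW-combinatorics}, stabilize by letting $\dim E$ grow (the paper phrases this as a colimit of a directed system over $E \mapsto E\oplus\bbc$), and use $\lambda^{(i)}_n = e_n$ for $i \ge n$ together with Proposition~\ref{prop:quadric-schurdim} to decide whether the complex terminates or continues linearly.
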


\begin{proof}
  Define $e_{n+i}=1$ for all $i>0$.
  Pick $E$ so that $m = \dim E \ge n+1$.   Recall the definition of $\Phi^{Z_{V,E}}$ from Definition \ref{def:thePhiFunctors}, where $Z_{V,E} = \uU (\g_{V,E}) / (S^2 (E))$.
  From Remark~\ref{rmk:EFW-combinatorics}, we get that $m > \ell(\lambda^{(i)})$ if $i< m$, and then Lemma~\ref{lem:PhiConvertsIntoAModule} gives an isomorphism of $A$-modules
  \[
    \Phi^{Z_{V,E}}(S^\bullet(E^*)(-d_i) \otimes \bbs_{\lambda^{(i)}}(E^*)) \cong A(-d_i) \otimes \bbs_{\lambda^{(i)}}^A.
  \]
  Since $\Phi^{Z_{V,E}}$ is induced by taking $\GL (E)$-invariants, it is an exact functor. Applying $\Phi^{Z_{V,E}}$ to the first $m$ terms of the EFW complex of Theorem \ref{thm:originalPieriResolutions} gives an exact complex
  \[
    A(-d_{m-1}) \otimes  \bbs_{\lambda^{(m-1)}}^A \to A(-d_{m-2}) \otimes  \bbs_{\lambda^{(m-2)}}^A \to \cdots \to A \otimes  \bbs_{\lambda^{(0)}}^A.
  \]
  Next, if we replace $E$ by $E \oplus \bbc$, the above construction returns 
  \[
    A(-d_{m}) \otimes  \bbs_{\lambda^{(m)}}^A \to A(-d_{m-1}) \otimes  \bbs_{\lambda^{(m-1)}}^A \to \cdots \to A \otimes  \bbs_{\lambda^{(0)}}^A,
  \]
  which extends the previous exact complex. We thus obtain a directed system of complexes, and the colimit of this system is our desired construction.

  For all $i \ge n$, we have $\lambda^{(i)}_{n} = e_n$. The two cases of the theorem are then a consequence of Proposition~\ref{prop:quadric-schurdim}.
  The last statement follows from the fact that the EFW complex is resolving a finite length module.
\end{proof}

\begin{remark}
    The pattern that all pure resolutions are either finite or have an infinite ``linear tail'' is expected by the general theory of resolutions over hypersurfaces, where the differentials of these tails are given by homogeneous minimal matrix factorizations of the quadric $q$. Since $q$ is quadratic any such matrix factorization will have linear entries.
\end{remark}

\begin{example}
    Consider the case of choosing degree shifts of the form $d , 1 , 1 , \dots$. This situation falls into the case $(2)$ of Theorem \ref{thm:quadricPieriResolutions} and we get resolutions of the form
    \[
      \cdots \to A(-d-i) \otimes \bbs_{(d,1^i)}^A \to  \cdots \to A(-d-1) \otimes \bbs_{(d,1)}^A \to A(-d) \otimes A_d \to A.
    \]
    In this special case, these resolutions can be constructed in a characteristic-free manner as the Priddy complex associated to the Koszul modules $A_+^d$ for $d > 0$ (where $A_+$ denotes the irrelevant ideal).
\end{example}

\begin{remark}
  Suppose that a finite length $A$-module $M$ has a pure free resolution of the form
\[
\cdots \to A(-d_{n+1})^{\beta_{n+1}} \to A(-d_n)^{\beta_n} \to \cdots
\to  A(-d_1)^{\beta_1} \to A(-d_0)^{\beta_0} \to M \to 0.
\]
We claim that for fixed $d_i$, the set of possible Betti numbers $(\beta_0,\beta_1,\dots)$ spans a 2-dimensional vector space and that it has a basis consisting of non-negative vectors corresponding to the cases $\beta_{n-1}=0$ and $\beta_{n-1}=1$.

First, the Hilbert series of $A$ is given by
\[
\hs_A(t) = \frac{1-t^2}{(1-t)^{n}} = \frac{1+t}{(1-t)^{n-1}}.
\]
Note that $\ker(A(-d_{n-2}))^{\beta_{n-2}} \to A(-d_{n-3})^{\beta_{n-3}})$ is a maximal Cohen--Macaulay module. In particular, its syzygy module is also maximal Cohen--Macaulay and has no free summands. From the theory of matrix factorizations, this implies that $\beta_{n-1+i} = \beta_{n-1}$ for all $i \ge 0$ and the resolution is
linear after the $(n-1)$st step, i.e., $d_{n-1+i} = d_{n-1} + i$ for $i \ge 0$. This yields:
\begin{align*}
\hs_M(t) &= \left( \sum_{i=0}^{n-2} (-1)^i \beta_i t^{d_i} \hs_A(t) \right) + (-1)^{n-1}
\beta_{n-1} t^{d_{n-1}} \hs_A(t) (1 - t + t^2 - t^3 + \cdots)\\
             &=  \left( \sum_{i=0}^{n-2} (-1)^i \beta_i \frac{t^{d_i} +  t^{d_i+1}}{(1-t)^{n-1}} \right)
               + (-1)^{n-1} \beta_{n-1} \frac{t^{d_{n-1}}}{(1-t)^{n-1}}. 
\end{align*}
Since $M$ is finite length, $\hs_M(t)$ is a polynomial, so $(1-t)^{n-1}$ divides the polynomial
\[
f(t) = \sum_{i=0}^{n-2} (-1)^i \beta_i(t^{d_i} + t^{d_i+1}) + (-1)^{n-1} \beta_{n-1} t^{d_{n-1}}.
\]
Equivalently, in terms of derivatives of $f$, we have $f(1) = f'(1) = \cdots = f^{(n-2)}(1) = 0$. This
gives us the system of linear equations
\begin{align*}
  \sum_{i=0}^{n-2} (-1)^i (2d_i - k + 2) \frac{d_i!}{(d_i - k + 1)!}
  \beta_i + (-1)^{n-1} \frac{d_{n-1}!}{(d_{n-1} - k)!} \beta_{n-1} &= 0 \quad (0
  \le k \le n-2).
\end{align*}

If $\beta_{n-1}=0$, then $f(t)$ is divisible by $1+t$, and the resulting problem reduces to the Herzog--K\"uhl equations for the polynomial ring in $n-1$ variables, in which case an explicit formula is given in \cite{herzogkuhl}.

Otherwise, we may assume $\beta_{n-1} \ne 0$, and up to rescaling, we may assume it is 1. Subtract the $\beta_{n-1}$ term from both sides of the above equations and then set $\beta_{n-1} = 1$. Then $\beta$ is the solution to $C\beta = (-1)^nx$ where
\begin{align*}
  C &= \begin{bmatrix} 2 & -2 & \cdots \\ 
  2d_0+1 & -(2d_1+1) & \cdots \\
  2d_0^2 & -2d_1^2 & \cdots \\
  (2d_0-1)d_0(d_0-1) & -(2d_1-1)d_1(d_1-1) &  \cdots \\
  \vdots \end{bmatrix},\qquad 
   x = \begin{bmatrix} 1 \\ d_{n-1} \\ d_{n-1}(d_{n-1}- 1) \\ d_{n-1}(d_{n-1}-1)(d_{n-1}-2) \\ \vdots \end{bmatrix}.
\end{align*}
Up to row operations, $C$ becomes $\pm 2 (d_{i-1}^j)_{1 \le i,j \le n-1}$, so in particular has nonzero determinant and $\beta$ is uniquely determined (up to scalar multiple). Since a non-negative solution exists by Theorem \ref{thm:quadricPieriResolutions}, this yields the desired claim. We have not attempted to get an explicit formula for $\beta$.
\end{remark}

\subsection{Rational normal curves}

For a composition $\alpha = (\alpha_1,\dots,\alpha_r)$, its ribbon diagram is the skew diagram with row lengths $\alpha_r,\dots,\alpha_1$ such that each row shares exactly one column with the next row.

Given another diagram $D$, the notation $D \odot \alpha$ will denote the skew partition obtained by attaching the top row of the ribbon diagram associated with $\alpha$ to the bottom row of the diagram $D$. Likewise, the notation $D \cdot \alpha$ is defined to be the skew partition obtained by concatenating the top row of the ribbon diagram associated with $\alpha$ to the bottom row of $D$.

\begin{example}
    Let $D = (3,3,2)/(1)$ and $\alpha = (2,2)$. Then
    \[
      \ytableausetup{boxsize=1em} 
      \alpha = \ydiagram{1+2,2}, \qquad D\cdot \alpha = \ydiagram{3+2,2+3,2+2,1+2,2}, \qquad D \odot \alpha = \ydiagram{4+2,3+3,1+4,2}. \qedhere
    \]
\end{example}
Let $D = \lambda / \mu$ be a skew-partition. By dualizing the Weyl module relations of \cite[Theorem II.3.16]{akin1982schur} the Schur module $\bbs_{D \odot (a)} (V)$ fits into the exact sequence
\begin{align*}
  0 \to \bbs_{D \odot (a)} \to S^{\lambda_1 - \mu_1} (V) \otimes \cdots \otimes S^{\lambda_n - \mu_n + a} (V) \to\\
 \bigoplus_{i=1}^{n-1} \bigoplus_{\ell=\lambda_{i}  - \lambda_{i+1} + 1}^{\lambda_i - \mu_i } S^{\lambda_1 - \mu_1} (V) \otimes \cdots \otimes S^{\lambda_i - \mu_i - \ell} \otimes S^{\lambda_{i+1} - \mu_{i+1} + \ell} (V) \otimes \cdots \otimes S^{\lambda_n - \mu_n + a} (V),
\end{align*}
where each component is induced by tensoring the appropriate identity maps with compositions of the form (let $\lambda'_n = \lambda_n+a$ and $\lambda'_i=\lambda_i$ otherwise)
\begin{align*}
  S^{\lambda_i - \mu_i} (V) \otimes S^{\lambda'_{i+1} - \mu_{i+1}} (V)
  &\xrightarrow{\Delta \otimes 1}  S^{\lambda_i - \mu_i - \ell} (V) \otimes S^\ell (V) \otimes S^{\lambda'_{i+1} - \mu_{i+1}} (V)\\
  &\xrightarrow{1 \otimes m}  S^{\lambda_i - \mu_i - \ell} (V) \otimes S^{\lambda'_{i+1} - \mu_{i+1} + \ell} (V),
\end{align*}
where $\Delta$ and $m$ denote the comultiplication and multiplication operations on symmetric powers, respectively. Notice that all of the above components commute with symmetric algebra multiplication on the $n$th tensor component, in which case we obtain a well-defined $S^\bullet (V)$-module structure on
\[
  \bbs_{D \odot \bullet} (V) := \bigoplus_{d \geq 0} \bbs_{D \odot (d)} (V).
\]
By convention, the module $\bbs_{D \odot \bullet} (V)$ has initial degree $0$ (with $(\bbs_{D \odot \bullet} (V))_0 = \bbs_D (V)$).

Assume now that $D \coloneq (\lambda_1,\lambda_2)$ is a partition with two rows. In characteristic $0$ almost all of the defining relations for the Schur module are redundant (it suffices to take the term with $\ell=\lambda_1-\lambda_2+1$) and we instead obtain an exact sequence of $S^\bullet (V)$-modules
\begin{align} \label{eqn:veronese-SES}
  0 \to \bbs_{\lambda \odot \bullet} (V) \to  S^\bullet (V)(\lambda_2) \otimes S^{\lambda_1} (V)\to S^\bullet (V) (\lambda_1+1) \otimes S^{\lambda_2 - 1} (V).
\end{align}
Note that the redundancy of the extra relations can be deduced either as a direct consequence of Zelevinsky's functor, or the description of the Jacobi--Trudi resolutions due to Akin \cite{akin1992complexes}. Furthermore, the rightmost map can be identified with the rightmost map of an EFW complex, so we conclude that the cokernel of \eqref{eqn:veronese-SES} is a finite length module.

\begin{prop}\label{prop:veroneseRes}
    Let $V$ be any vector space and $S^{(d)} \coloneq S^\bullet (V)^{(d)}$ denote the $d$th Veronese subalgebra. For positive integers $e_1, e_2$, set $D \coloneq (de_1 + de_2 - 1, de_2)/(d-1)$. Then there is an infinite $\GL (V)$-equivariant resolution
    $$ \cdots \to S^{(d)} (-e_1-e_2-  i) \otimes \bbs_{D \cdot (d^i)} (V) \to \cdots \to S^{(d)} (-e_1-e_2- 1) \otimes \bbs_{D \cdot (d)} (V)$$
    $$\to   S^{(d)} (-e_1-e_2) \otimes \bbs_{D} (V) \to S^{(d)} (-e_1) \otimes S^{d(e_1 + e_2 - 1) } \to S^{(d)} \otimes S^{d(e_2-1)} (V).$$
    Moreover, this is a resolution of a finite length $S^{(d)}$-module.
\end{prop}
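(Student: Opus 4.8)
The plan is to run the same scheme as the proof of Theorem~\ref{thm:quadricPieriResolutions}, using that the Veronese subalgebra behaves, from the point of view of total positivity, exactly like a quadric hypersurface ring. First I would record the structural input: $S^{(d)}$ is the $d$th Veronese sequence of $S^\bullet(V)$, so by Proposition~\ref{prop:veronese} it is $\GL(V)$-totally positive, by Proposition~\ref{prop:JTandSchurEquivalence} it carries a Jacobi--Trudi structure, its Schur modules are the classical skew Schur functors $\bbs_\mu^{S^{(d)}}\cong\bbs_{\alpha/\beta}(V)$ identified in Proposition~\ref{prop:veronese}, and (by the Veronese example in Section~\ref{sec:combinatorics}) it has Schur dimension $(\dim V)|1$, just like a quadric hypersurface has Schur dimension $(m-1)|1$. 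As in Theorem~\ref{thm:quadricPieriResolutions}, the resolution will be produced as a colimit of finite truncations coming from the Akin--Buchsbaum--Weyman/EFW machinery.

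Next I would build the bottom three terms directly from \eqref{eqn:veronese-SES}. Applied to the two-row skew shape $D=(de_1+de_2-1,de_2)/(d-1)$, whose rows have lengths $d(e_1+e_2-1)$ and $de_2$ and whose surviving relation has index $\ell=\lambda_1-\lambda_2+1=de_1$, the skew form of \eqref{eqn:veronese-SES} reads
\[
0 \to \bbs_{D\odot\bullet}(V) \to S^\bullet(V)(de_2)\otimes S^{d(e_1+e_2-1)}(V) \to S^\bullet(V)(d(e_1+e_2))\otimes S^{d(e_2-1)}(V) \to \coker \to 0
\]
over $S^\bullet(V)$, and the remark just before the statement gives that $\coker$ has finite length (its rightmost map is the rightmost map of an EFW complex). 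Because the three exponents $de_2$, $d(e_1+e_2-1)$, $d(e_1+e_2)$ are all divisible by $d$, restricting this sequence to the $S^\bullet(V)$-degrees divisible by $d$ carries the two free $S^\bullet(V)$-modules to the free $S^{(d)}$-modules $S^{(d)}(-e_1)\otimes S^{d(e_1+e_2-1)}(V)$ and $S^{(d)}\otimes S^{d(e_2-1)}(V)$ (after the overall shift putting the latter in internal degree $0$); it identifies the module $M$ being resolved with the degree-divisible-by-$d$ part of $\coker$, hence of finite length over $S^{(d)}$, and it identifies the first syzygy of $M$ with $\bbs_{D\odot\bullet}(V)^{[0]}$, the summand of $\bbs_{D\odot\bullet}(V)$ in $S^\bullet(V)$-degrees divisible by $d$.

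Then I would resolve the $S^{(d)}$-module $\bbs_{D\odot\bullet}(V)^{[0]}$ by the ``EFW complex over $S^{(d)}$'' constructed exactly as in the proof of Theorem~\ref{thm:quadricPieriResolutions}: the Jacobi--Trudi structure on $S^{(d)}$ lets one apply Zelevinsky's functor to the truncated EFW complexes over a polynomial ring $S^\bullet(E^*)$ (with $\dim E$ growing), yielding acyclic complexes whose terms are the free $S^{(d)}$-modules $S^{(d)}(-e_1-e_2-i)\otimes\bbs_{D\cdot(d^i)}(V)$; here the $i$th Schur functor is pinned down through Proposition~\ref{prop:veronese}, since adding a box to the partition indexing a Veronese Schur module corresponds to appending a length-$d$ row, i.e.\ to the operation $\cdot(d)$, and the differentials are the expected symmetric-power (co)multiplications coming from Akin's description \cite{akin1992complexes} of the Jacobi--Trudi resolutions. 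Passing to the colimit over $E\hookrightarrow E\oplus\bbc$, exactly as in Theorem~\ref{thm:quadricPieriResolutions}, glues these truncations to the asserted infinite resolution, and the resolved module has finite length because $\coker$ does.

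The main obstacle is the analogue of Lemma~\ref{lem:PhiConvertsIntoAModule}: one must verify that the Zelevinsky-functor construction applied to the EFW complexes genuinely outputs \emph{free} $S^{(d)}$-modules on precisely the generators $\bbs_{D\cdot(d^i)}(V)$, with the stated internal shifts and the correct differentials. The delicate point, already visible in the bottom three terms, is that among the twisted Veronese modules $S^\bullet(V)^{(d,c)}$ only $c=0$ is free over $S^{(d)}$, so one has to check that the shape $D$ and the length-$d$ rows appended in $D\cdot(d^i)$ are arranged so that every module that arises lands in that residue; matching Akin's Jacobi--Trudi resolutions with the combinatorics of the operations $\odot$ and $\cdot$ is the bookkeeping that makes this rigorous.
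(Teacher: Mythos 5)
Your treatment of the bottom of the complex is essentially the paper's: apply \eqref{eqn:veronese-SES} (in effect to the two-row partition $\lambda=(de_1+de_2-d,\,de_2-d+1)$ rather than to the skew shape $D$ itself --- your numerology, e.g.\ the relation index $\ell=de_1$ and the symmetric power $S^{d(e_2-1)}(V)$, is that of $\lambda$, not of $D$), twist, and restrict to degrees divisible by $d$; this produces the two free $S^{(d)}$-terms, identifies the first syzygy with $M=\bigoplus_{t\ge 0}\bbs_{D\odot(td)}(V)$, and gives the finite-length claim. That part matches the paper up to bookkeeping.

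The genuine gap is in how you resolve $M$. The paper does \emph{not} run the $\Phi$/Zelevinsky/EFW machinery of Theorem~\ref{thm:quadricPieriResolutions} over $S^{(d)}$ for this step; it quotes \cite[Theorem 6.28, Corollary 6.5]{vandebogert2023ribbon}, which supply a \emph{linear} $S^{(d)}$-free resolution of the Koszul module $M$ with terms $S^{(d)}(-i)\otimes\bbs_{D\cdot(d^i)}(V)$, and then splices. You correctly identify the missing ingredient in your route --- an analogue of Lemma~\ref{lem:PhiConvertsIntoAModule} for $S^{(d)}$ --- but you dismiss it as bookkeeping, and it is not: the Veronese Schur modules $\bbs_\mu^{S^{(d)}}$ are \emph{skew} Schur functors by Proposition~\ref{prop:veronese}, hence not irreducible, so the Pieri maps between them are not determined up to scalar and Schur's lemma cannot be invoked to see that the proposed maps even compose to zero, let alone that the resulting complexes are acyclic with free terms. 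This is precisely the obstruction the authors flag at the end of the section when they say they do not know whether an EFW-type construction yields pure resolutions over arbitrary Veronese subalgebras. As written, your plan for the infinite tail replaces the one step the paper outsources to the ribbon Schur functor theory with a construction that is not known to work; to close the gap you should instead invoke the linear resolution of $M$ from \cite{vandebogert2023ribbon} (or give an independent proof of that linear resolution).
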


\begin{proof}
  Consider \eqref{eqn:veronese-SES} with $\lambda = (de_1+de_2-d, de_2-d+1)$ and tensor with $S(-\lambda_1-1)$  (i.e., do a grading shift) to get
  \[
    0 \to \bbs_{\lambda \odot \bullet} (V)(-de_1-de_2+d-1) \to
    S^\bullet (V)(-de_1) \otimes S^{d(e_1+e_2-1)} (V) \to S^\bullet (V) \otimes S^{d(e_2-1)} (V).
  \]
  Now restrict to the terms whose degrees are a multiple of $d$ to get the exact sequence
  \[
    0 \to M(-e_1-e_2) \to S^{(d)} (V)(-e_1)\otimes S^{d(e_1+e_2-1)} (V) \to S^{(d)} (V) \otimes S^{d(e_2-1)} (V),
  \]
  where
  \[
    M = \bigoplus_{t \ge 0} \bbs_{\lambda \odot (d-1+td)}(V) = \bigoplus_{t \ge 0} \bbs_{D \odot (td)}(V).
  \]
  It follows from \cite[Theorem 6.28, Corollary 6.5]{vandebogert2023ribbon} that $M$ has a linear $S^{(d)}$-free resolution of the form
    $$\cdots \to S^{(d)} (- i) \otimes \bbs_{D \cdot (d^i)} (V) \to \cdots \to S^{(d)} (- 1) \otimes \bbs_{D \cdot (d)} (V) \to S^{(d)} \otimes \bbs_D (V).$$
    Splicing the above two complexes together and twisting the grading yields the desired resolution.
    The claim about the cokernel having finite length follows from the discussion immediately following \eqref{eqn:veronese-SES}.
\end{proof}

Recall that by Proposition \ref{prop:veronese}, for any partition $\lambda = (\lambda_1 , \lambda_2)$ there is an equality
$$\bbs_{\lambda}^{S^{(d)}} = \bbs_{(d \lambda_1 + d-1 , d \lambda_2) / (d-1)} (V).$$

\begin{remark}
    When $\dim V = 2$, the resolutions of Proposition~\ref{prop:veroneseRes} give another explanation of the correspondence discovered in \cite{BS-RNC} between degree sequences $(d_0 , d_1, d_2 ; d-1)$ over the rational normal curve and degree sequences $(d \cdot d_0 , d \cdot d_1, d \cdot d_2 - (d-1))$: first, note that since $\dim V = 2$ there is an isomorphism
    $$\bbs_{D \cdot (d^i)} \cong \bbs_{(d^{i-1} , d-1)}^{\rm rib} (V) \otimes \bbs_{(de_1 + de_2 , de_2 + 1)/(d)} (V),$$
    where $\bbs_{(d^{i-1} , d-1)}^{\rm rib} (V)$ denotes the ribbon Schur functor associated to the composition $(d^{i-1} , d-1)$. Thus the Betti table corresponding to the tail of the resolution of Proposition~\ref{prop:veroneseRes} is, up to a scalar multiple, the same as that for the resolution of the maximal Cohen--Macaulay module $S^{(d-1 \mod d)} \cong \bigoplus_{k \geq 0} S^{dk+d-1} (V)$. Similarly, there is an isomorphism
    $$\bbs_D (V) \cong S^{d-1} (V) \otimes \bbs_{(de_1 + de_2 -d , de_2 - (d-1))} (V),$$
    implying that the second Betti number is a scalar multiple of the second Betti number corresponding to the degree sequence $(0, de_1 , de_1 + de_2)$ over the polynomial ring $S^\bullet (V)$. Finally, the $0$ and $1$ Betti numbers over $S^{(d)}$ are directly equal to those occurring over the polynomial ring $S^\bullet (V)$.
\end{remark}

\begin{theorem}
    Let $A \coloneq S^{(d)}$ denote the coordinate ring of the degree $d$ rational normal curve and set $G = \GL_2$. Pick positive integers $e_1 , e_2, e_3$, set $e_i=1$ for $i \ge 4$, set $d_i = \sum_{j=1}^i e_j$  and define $\lambda^{(i)}$ as in \S\ref{sec:EFW}.
    \begin{enumerate}
        \item If $e_3>1$, there is a finite length $G$-equivariant pure free resolution 
          \[
            0 \to A (-d_{2}) \otimes \bbs_{\lambda^{(2)}}^A \to  A (-d_{1}) \otimes \bbs_{\lambda^{(1)}}^A \to A \otimes \bbs_{\lambda^{(0)}}^A
          \]
          with degree sequence $0,d_1 , d_2 $. 

      \item If $e_3=1$, there is an infinite length $G$-equivariant pure free resolution
          \begin{align*}
\cdots \to            A (-d_{n}) \otimes \bbs_{\lambda^{(n)}}^A
 \to \cdots \to A (-d_{1}) \otimes \bbs_{\lambda^{(1)}}^A \to A \otimes \bbs_{\lambda^{(0)}}^A
          \end{align*}
          with infinite degree sequence $0,d_1 , d_2 ,  d_{2}+1 ,d_{2}+2 , \dots$.
        \end{enumerate}
              In both cases, the module being resolved has finite length.
\end{theorem}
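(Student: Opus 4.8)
The plan is to follow the proof of Theorem~\ref{thm:quadricPieriResolutions}, with the quadric hypersurface replaced by $A=S^{(d)}$, taking $\dim V=2$ and $G=\GL_2$. Two facts are needed at the outset. First, $A$ carries a Jacobi--Trudi structure: by Proposition~\ref{prop:veronese} the $d$th Veronese sequence of the ($\GL_2$-totally positive) polynomial ring $S^\bullet(V)$ is again $\GL_2$-totally positive, so Proposition~\ref{prop:JTandSchurEquivalence} applies and the Schur functors $\bbs_\mu^A$ and the complexes in the statement make sense. Second, $A$ has Schur dimension $2|1$, i.e.\ $\bbs_\mu^A=0$ precisely when $\mu_3>1$; this is the content of the final example of \S\ref{sec:combinatorics}. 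Exactly as in Theorem~\ref{thm:quadricPieriResolutions}, the EFW partitions for the shifts $(e_1,e_2,e_3,1,1,\dots)$ satisfy $\lambda^{(i)}_3=e_3$ for every $i\ge 3$, so whether the terms $\bbs_{\lambda^{(i)}}^A$ eventually vanish is governed precisely by whether $e_3>1$, which produces the dichotomy between (1) and (2).

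The key technical input is the analogue of Lemma~\ref{lem:PhiConvertsIntoAModule}: an exact functor $\Phi$ from polynomial $\GL(E)$-representations to graded $A$-modules with $\Phi\big(S^\bullet(E^*)(-a)\otimes\bbs_\mu(E^*)\big)\cong A(-a)\otimes\bbs_\mu^A$ (the free $A$-module on $\bbs_\mu^A$) whenever $\dim E>\ell(\mu)$. Granting this, the argument is identical to that of Theorem~\ref{thm:quadricPieriResolutions}: apply $\Phi$ (for $\dim E$ large, then pass to the colimit over $\dim E\to\infty$) to the EFW complex of Theorem~\ref{thm:originalPieriResolutions} for the shifts $(e_1,e_2,e_3,1,1,\dots)$; since taking $\GL(E)$-invariants and filtered colimits are exact, the result is an exact complex of free $A$-modules with terms $A(-d_i)\otimes\bbs_{\lambda^{(i)}}^A$. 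When $e_3>1$ the terms with $i\ge 3$ vanish, so it truncates to the length-$2$ resolution of case (1) --- injectivity of the leftmost map following from exactness together with the vanishing of the $\bbs_{\lambda^{(3)}}$ term --- and when $e_3=1$ it is infinite with a linear tail, giving case (2); finiteness of the resolved module is inherited from the EFW side as in Theorem~\ref{thm:quadricPieriResolutions}. Alternatively, case (2) follows by simply reindexing the resolution of Proposition~\ref{prop:veroneseRes} applied to $e_1,e_2$ and identifying its terms via $S^{d\ell}(V)=\bbs_{(\ell)}^A$, the two-row formula $\bbs_{(\lambda_1,\lambda_2)}^A=\bbs_{(d\lambda_1+d-1,\,d\lambda_2)/(d-1)}(V)$, and its skew generalization from Proposition~\ref{prop:veronese} applied to the shapes $D\cdot(d^i)$.

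The main obstacle is constructing $\Phi$. Unlike the quadric case there is no presentation of the Veronese Jacobi--Trudi structure as a quotient $\uU(\g)/I$, so the degeneration of $\Phi^{Z_{V,E}}$ used in the proof of Lemma~\ref{lem:PhiConvertsIntoAModule} is not directly available; one must instead either produce a commutative model for $(S^{(d)})^{\otimes n}$ --- comparing it with the polynomial ring $S^\bullet(V\otimes E)=S^\bullet(V)^{\otimes n}$, whose $\Phi$ is monoidal, through the Veronese transform on shapes --- or argue directly from the exact sequence~\eqref{eqn:veronese-SES} and the linear ribbon resolutions of \cite{vandebogert2023ribbon} used in Proposition~\ref{prop:veroneseRes} that the invariant-theoretic functor $M\mapsto((S^{(d)})^{\otimes n}\otimes M)^{\GL_n}$ is exact and sends $S^\bullet(E^*)\otimes\bbs_\mu(E^*)$ to the free $A$-module on $\bbs_\mu^A$. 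For the latter, the generalized Pieri rule (Proposition~\ref{cor:fundamentalIdentities}(1)) shows the target is a direct sum of $\bbs_\nu^A$ with $\ell(\nu)\le\ell(\mu)+1\le\dim E$, placing it in the stable range; a Hilbert-series count as in Lemma~\ref{lem:PhiConvertsIntoAModule} then completes the identification, and the remaining combinatorial claim $\lambda^{(i)}_3=e_3$ for $i\ge 3$ is immediate from the construction of the EFW partitions recalled in \S\ref{sec:EFW}.
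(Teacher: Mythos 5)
Your reduction of the combinatorics (Schur dimension $2|1$ for $A=S^{(d)}$ with $\dim V=2$, the dichotomy governed by $\lambda^{(i)}_3=e_3$ for $i\ge 3$) is correct, and your ``alternative'' for case (2) --- reindexing Proposition~\ref{prop:veroneseRes} --- is exactly what the paper does there. But your primary route, via an analogue of Lemma~\ref{lem:PhiConvertsIntoAModule} for the Veronese algebra, has a genuine gap that you flag but do not close, and case (1) rests entirely on it. The difficulty is not exactness of $M\mapsto((S^{(d)})^{\otimes n}\otimes M)^{\GL_n}$ (invariants of a reductive group are exact in characteristic $0$); it is the $A$-module structure on the output and its freeness. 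In the quadric case this came from Proposition~\ref{prop:monoidalPhiFunctors}, which requires $Z_{V,E}$ to be a $\GL(E)$-equivariant module over a polynomial $\GL(E)$-algebra (namely $\uU(\g_{V,E})$) realizing $S^\bullet(V)$ under $\Phi$, together with the degeneration argument. For $S^{(d)}$ no such structure is available: the $\GL_n$-action on $(S^{(d)})^{\otimes n}$ is produced only abstractly from the isotypic decomposition of Proposition~\ref{prop:JTandSchurEquivalence} and is not compatible with any multiplication (note it is \emph{not} the restriction of the $\GL_n$-action on $S^\bullet(V\otimes\bbc^n)$, since the relevant weight spaces sit in multidegrees scaled by $d$). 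The paper's closing remark states explicitly that it is not known how to run this EFW-transform construction for general Veronese subalgebras, precisely because Pieri maps between skew Schur functors are not uniquely determined; so the two strategies you gesture at are not routine and would need to be carried out in detail.

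The paper's actual proof of (1) sidesteps $\Phi$ entirely: it takes the EFW complex over $S=S^\bullet(V)$ with the scaled degree sequence $(0,dd_1,dd_2)$ (a finite three-term resolution since $\dim V=2$), restricts to the graded pieces whose degrees are divisible by $d$ --- which carries free $S$-modules $S(-dd_i)\otimes W$ to free $S^{(d)}$-modules $S^{(d)}(-d_i)\otimes W$ and preserves exactness --- and then identifies $\bbs^A_{\lambda^{(i)}}$ with $\bbs_{\mu^{(i)}}(V)$ up to a fixed one-dimensional twist using the Veronese shape transform of Proposition~\ref{prop:veronese}. You should either adopt that argument for case (1) or supply a complete construction of the functor you are assuming.
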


\begin{proof}
  First, we spell out the partitions $\lambda^{(i)}$ for $i=0,1,2,3$:
  \begin{align*}
    \lambda^{(0)} &= (e_2+e_3-2, e_3-1)\\
    \lambda^{(1)} &= (e_1+e_2+e_3-2, e_3-1)\\
    \lambda^{(2)} &= (e_1+e_2+e_3-2, e_2+e_3-1)\\
    \lambda^{(3)} &= (e_1+e_2+e_3-2, e_2+e_3-1,e_3).
  \end{align*}

  \textbf{Proof of (1):} First consider the EFW complex over $S = S^\bullet(V)$ from \S\ref{sec:EFW} with degree sequence $(0,dd_1,dd_2)$. This is a resolution of the form
  \[
     0 \to S (-dd_{2}) \otimes \bbs_{\mu^{(2)}} \to  S (-dd_{1}) \otimes \bbs_{\mu^{(1)}} \to S \otimes \bbs_{\mu^{(0)}},
   \]
where
  \begin{align*}
    \mu^{(0)} &= (de_2+de_3-2, de_3-1)\\
    \mu^{(1)} &= (de_1+de_2+de_3-2, de_3-1)\\
    \mu^{(2)} &= (de_1+de_2+de_3-2, de_2+de_3-1).
  \end{align*}
  Then $\bbs^A_{\lambda^{(3)}}=0$, and for $i=0,1,2$ we have isomorphisms
  \[
    \bbs^A_{\lambda^{(i)}}(V) \cong \bbs_{\mu^{(i)}}(V) \otimes \bbs_{(2d-1,2d-1)}(V)^* \otimes S^{d-1}(V).
  \]
  Thus the desired complex is obtained from the EFW complex, restricting to terms whose degrees are divisible by $d$, and tensoring with $\bbs_{(2d-1,2d-1)}(V)^* \otimes S^{d-1}(V)$.

  \textbf{Proof of (2):} Use Proposition~\ref{prop:veroneseRes} with $\dim V =2$.
\end{proof}

In general, we do not know if an identical construction to the above yields pure free resolutions over arbitrary Veronese subalgebras. One issue with the general case is that ``Pieri'' maps are not uniquely defined for skew Schur functors, and we moreover cannot employ Schur's lemma to deduce that the resulting sequences of maps yield a complex. 

\bibliography{biblio}

\end{document}